\numberwithin{equation}{section}
\numberwithin{theorem}{section}
\definecolor{am}{RGB}{0,101,189}
\newcommand{\R}{\mathbb{R}}
\newcommand{\be}{\begin{equation}}
\newcommand{\ee}{\end{equation}}
\newcommand{\bee}{\begin{equation*}}
\newcommand{\eee}{\end{equation*}}
\newcommand{\bea}{\begin{eqnarray}}
\newcommand{\eea}{\end{eqnarray}}
\newcommand{\beaa}{\begin{eqnarray*}}
\newcommand{\eeaa}{\end{eqnarray*}}
\newcommand{\Mcal}{\mathcal{M}}
\begin{document}

\title{Oracle complexities of augmented Lagrangian methods for nonsmooth manifold optimization}

\author{
Kangkang Deng\thanks{Department of Mathematics,  National University of Defense Technology, Changsha, 410073,
CHINA (\email{freedeng1208@gmail.com}).} \and 
Jiang Hu\thanks{Corresonding author. Massachusetts General Hospital and Harvard Medical School, Harvard University, Boston, MA
02114, US (\email{hujiangopt@gmail.com}).} \and
Jiayuan Wu\thanks{College of Engineering, Peking University, Beijing 100871, CHINA (\email{1901110043@pku.edu.cn}).} \and
Zaiwen Wen\thanks{Beijing International Center for Mathematical Research, Center for Machine Learning Research and College of Engineering, Peking University, Beijing 100871, CHINA (\email{wenzw@pku.edu.cn}).}
}

\maketitle

\begin{abstract}
  In this paper, we present two novel manifold inexact augmented Lagrangian methods, \textbf{ManIAL} for deterministic settings and \textbf{StoManIAL} for stochastic settings, solving nonsmooth manifold optimization problems. By using the Riemannian gradient method as a subroutine, we establish an $\mathcal{O}(\epsilon^{-3})$ oracle complexity result of \textbf{ManIAL}, matching the best-known complexity result.  Our algorithm relies on the careful selection of penalty parameters and the precise control of termination criteria for subproblems. Moreover, for cases where the smooth term follows an expectation form, our proposed \textbf{StoManIAL} utilizes a Riemannian recursive momentum method as a subroutine, and achieves an oracle complexity of $\tilde{\mathcal{O}}(\epsilon^{-3.5})$, which surpasses the best-known $\mathcal{O}(\epsilon^{-4})$ result. Numerical experiments conducted on sparse principal component analysis and sparse canonical correlation analysis demonstrate that our proposed methods outperform an existing method with the previously best-known complexity result. To the best of our knowledge, these are the first complexity results of the augmented Lagrangian methods for solving nonsmooth manifold optimization problems.
\end{abstract}

\section{Introduction}
We are concerned with the following nonsmooth composite manifold optimization problem
\begin{equation}\label{prob}
    \min_{x\in\mathcal{M}} \varphi(x):= f(x) + h(\mathcal{A}x), 
\end{equation}
where $f:\mathbb{R}^n \rightarrow \mathbb{R}$ is a continuously differentiable function, $\mathcal{A}:\mathbb{R}^n \rightarrow \mathbb{R}^m$ is a linear mapping, $h:\R^m \rightarrow (-\infty,+\infty]$ is a proper closed convex function, and $\Mcal$ is an embedded Riemannian submanifold. Hence, the objective function of \eqref{prob} can be nonconvex and nonsmooth. 
Manifold optimization with smooth objective functions (i.e., smooth manifold optimization) has been extensively studied in the literature; see \cite{AbsMahSep2008,boumal2023intromanifolds,sato2021riemannian,hu2020brief} for a comprehensive review. Recently, manifold optimization with nonsmooth objective functions has been growing in popularity due to its wide applications in sparse principal component analysis \cite{jolliffe2003modified}, nonnegative principal component analysis \cite{zass2006nonnegative,jiang2023exact}, semidefinite programming \cite{burer2003nonlinear,wang2023decomposition}, etc. 
Different from smooth manifold optimization, solving problem \eqref{prob} is challenging because of the nonsmoothness.
Recently, a range of algorithms is developed to address nonsmooth optimization problems. Notably, one category of algorithms is the augmented Lagrangian framework, as introduced by \cite{peng2022riemannian,zhou2022semismooth}. Compared with other existing works, such algorithm framework can deal with the case of the general $\mathcal{A}$ along with possible extra constraints \cite{zhou2022semismooth}. However, they both only provide an asymptotic convergence result, and the iteration complexity or oracle complexity is unclear. This naturally raises the following question:

{\textit{Can we design an augmented Lagrangian algorithm framework for \eqref{prob} with oracle complexity guarantee?}}

We answer this question affirmatively, giving two novel augmented Lagrangian algorithms that achieves the oracle complexity of $\mathcal{O}(\epsilon^{-3})$ in the deterministic setting and $\tilde{\mathcal{O}}(\epsilon^{-3.5})$ in the stochastic setting. The main contributions of this paper are given as follows:




\begin{itemize}
   \item  Developing an inexact augmented Lagrangian algorithm framework for solving problem \eqref{prob}. Our algorithm relies on carefully selecting penalty parameters and controlling termination criteria for subproblems.  In particular, when the Riemannian gradient descent method is used for the inner iterates, we prove that our algorithm, \textbf{ManIAL}, finds an $\epsilon$-stationary point with $\mathcal{O}(\epsilon^{-3})$ calls to the first-order oracle (refer to Theorem \ref{theo:oracle}). To the best of our knowledge, this is the first complexity result for the augmented Lagrangian method for solving \eqref{prob}. 
    \item  Addressing scenarios where the objective function $f$ takes the form $f(x): = \mathbb{E}_{\xi}[f(x;\xi)]$. We introduce a stochastic augmented Lagrangian method. 
    Owing to the smoothness of the subproblem, we are able to apply the Riemannian recursive momentum method as a subroutine within our framework. Consequently, we establish that \textbf{StoManIAL} achieves the oracle complexity of $\tilde{\mathcal{O}}(\epsilon^{-3.5})$ (refer to Theorem \ref{theo:sto:oracle}), which is better than the best-known $\mathcal{O}(\epsilon^{-4})$ result \cite{li2021weakly}.  
    \item 
    Our algorithm framework is highly flexible, allowing for the application of various Riemannian (stochastic) optimization methods, including first-order methods, semi-smooth Newton methods, and others, to solve sub-problems. This is due to the smoothness properties of the subproblems. Additionally, we provide two options for selecting the stopping criteria for subproblems, based on either a given residual or a given number of iterations, further enhancing the flexibility of the algorithm.
\end{itemize}

\subsection{Related works}

When the objective function is geodesically convex over a Riemannian manifold, the subgradient-type methods are studied in \cite{Bento2017Iteration,ferreira2019iteration,ferreira1998subgradient}. An asymptotic convergence result is first established in \cite{ferreira1998subgradient}, while an iteration complexity of $\mathcal{O}(\epsilon^{-2})$ is established in \cite{Bento2017Iteration,ferreira2019iteration}. For the general nonsmooth problem on the Riemannian manifold, the Riemannian subgradient may not exist. By sampling points within a neighborhood of the current iteration point at which the objective function $f$ is differentiable,
the Riemannian gradient sampling methods are developed in \cite{hosseini2018line,hosseini2017riemannian}.

 There are many works addressing nonsmooth problems in the form of \eqref{prob}.   
  When $\mathcal{A}$ in \eqref{prob} is the identity operator and the proximal operator of $h$ can be calculated analytically, Riemannian proximal gradient methods \cite{chen2020proximal,huang2022riemannian,huang2023inexact} are proposed. These methods achieve an outer iteration complexity of $\mathcal{O}(\epsilon^{-2})$ for attaining an $\epsilon$-stationary solution. It should be emphasized that each iteration in these algorithms involves solving a subproblem that lacks an explicit solution, and they utilize the semismooth Newton method to solve it.  
For the general $\mathcal{A}$, Peng et al. \cite{peng2022riemannian} develop the Riemannian smoothing gradient method by utilizing a smoothing technique from the Moreau envelope.  They show that the algorithm achieves an iteration complexity of $\mathcal{O}(\epsilon^{-3})$. Moreover, Beck and Rosset \cite{beck2023} propose a dynamic smoothing gradient descent on manifold and obtain an iteration complexity of $\mathcal{O}(\epsilon^{-3})$. Very recently, based on a similar smoothing technique, a Riemannian alternating direction method of multipliers (RADMM) is proposed in \cite{li2022riemannian}  with an iteration complexity result of $\mathcal{O}(\epsilon^{-4})$ for driving a Karush–Kuhn–Tucker (KKT) residual based stationarity. This complexity result is a bit worse than $\mathcal{O}(\epsilon^{-3})$ \cite{peng2022riemannian} because there is a lack of adaptive adjustment of the smoothing parameter. There is also a line of research based on the augmented Lagrangian function. Deng and Peng \cite{deng2022manifold} proposed a manifold inexact augmented Lagrangian method for solving problem \eqref{prob}. Zhou et al. \cite{zhou2022semismooth} consider a more general problem and propose a globalized semismooth Newton method to solve its subproblem efficiently. In these two papers, they only give the asymptotic convergence result, while the iteration complexity is missing.

In the case when the manifold $\Mcal$ is specified
by equality constraints $c(x) = 0$, e.g., the Stiefel manifold. Problem \eqref{prob} can be regarded as a constrained optimization problem with a nonsmooth and nonconvex objective function. Given that $\mathcal{M}$ is often nonconvex, we only list the related works in case of that the constraint functions are nonconvex. 
 Papers \cite{li2021rate,sahin2019inexact} propose and study the iteration complexity of augmented Lagrangian methods for solving nonlinearly
constrained nonconvex composite optimization problems. The iteration complexity results they achieve for an $\epsilon$-stationary point are both $\tilde{\mathcal{O}}(\epsilon^{-3})$.  More specifically, \cite{sahin2019inexact} uses the accelerated gradient method of \cite{ghadimi2016accelerated} to obtain the
approximate stationary point. On the other hand, the authors in \cite{li2021rate} obtain such approximate stationary point by applying
an inner accelerated proximal method as in \cite{carmon2018accelerated,kong2019complexity}, whose generated subproblems are convex. It is worth mentioning that both of these papers make a strong assumption about how the feasibility of an iterate is related to its stationarity. Lin et al. \cite{lin2019inexact} propose an inexact proximal-point penalty method by solving a sequence of penalty subproblems.  Under a non-singularity condition, they show a
complexity result of $\tilde{\mathcal{O}}(\epsilon^{-3})$.

Finally, we point out that there exist several works for nonsmooth expectation problems on the Riemannian manifold, i.e., when $f(x) = \mathbb{E}[f(x,\xi)]$ in problem \eqref{prob}. In particular, Li et al. \cite{li2021weakly} focus on a class of weakly convex problems on the Stiefel manifold. They present a Riemannian stochastic subgradient method and show that it has an iteration complexity of $\mcO(\epsilon^{-4})$ for driving a natural stationarity measure below $\epsilon$. Peng et al. \cite{peng2022riemannian} propose a Riemannian stochastic smoothing method and give an iteration complexity of $\mathcal{O}(\epsilon^{-5})$ for driving a natural stationary point.  Li et al. \cite{li2024stochastic}  propose a stochastic inexact augmented Lagrangian method to solve problems involving a nonconvex composite objective and nonconvex smooth functional constraints. Under certain regularity conditions, they establish an oracle complexity
result of $\mathcal{O}(\epsilon^{-5})$ with a stochastic first-order oracle. Wang et al. \cite{wang2022riemannian} propose two Riemannian stochastic proximal gradient methods for minimizing a nonsmooth function over the Stiefel manifold. They show that the proposed algorithm finds an $\epsilon$-stationary point with $\mathcal{O}(\epsilon^{-3})$ first-order oracle complexity. It should be emphasized that each oracle call involves a subroutine whose iteration complexity remains unclear.

In Table \ref{tab1}, we summarize our complexity results and several existing methods to produce
an $\epsilon$-stationary point. The definitions of first-order oracle in determined and stochastic settings are provided in Definition  \ref{def:detern} and \ref{def:stochastic}, respectively. The notation $\tilde{\mathcal{O}}$ denotes the dependence of a $\log$ factor. Here, we only consider the scenario where both the objective function and the nonlinear constraint are nonconvex. For the iteration complexity of ALM in convex case, please refer to \cite{aybat2012first,lu2023iteration,xu2021iteration,xu2021first,lan2016iteration}, etc.  We do not add the proximal type algorithms in \cite{chen2020proximal,wang2022riemannian,huang2022riemannian,huang2023inexact} since those algorithms involve using the semismooth Newton method to solve the subproblem, where the overall oracle complexity is not clear.   It can easily be shown that our algorithms achieve better oracle complexity results both in determined and stochastic settings. In particular, in the stochastic setting, our algorithm establishes an oracle complexity of $\tilde{\mathcal{O}}(\epsilon^{-3.5})$, which is better than the best-known $\mathcal{O}(\epsilon^{-4})$ result. 

\begin{table} 
   \caption{Comparison of the oracle complexity results of several methods in the literature to our method to produce an
$\epsilon$-stationary point}
    \centering
    \begin{tabular}{|c|c|c|c|c|}
  \hline
  Algorithms & $\mathcal{A}$ & Constraints & Objective    & Complexity \\ \hline
  iALM \cite{sahin2019inexact} & $\mathcal{I}$ & $c(x) = 0$ & determine   &    $\mathcal{O}(\epsilon^{-4})$\\ \hline
  iPPP \cite{lin2019inexact} & $\mathcal{I}$ & $c(x) = 0$ & determine &    $\tilde{\mathcal{O}}(\epsilon^{-3})$\tablefootnote{We only give the case of $c(x)$ is nonconvex and a non-singularity condition holds
on $c(x)$}\\ \hline
 improved iALM \cite{li2021rate} & $\mathcal{I}$ & $c(x) = 0$ & determine &    $\tilde{\mathcal{O}}(\epsilon^{-3})$\\ \hline
  Sto-iALM \cite{li2024stochastic} & $\mathcal{I}$ & $\mathbb{E}_{\xi}[c(x,\xi)] = 0$ & stochastic &    $\mathcal{O}(\epsilon^{-5})$\\ \hline
   RADMM \cite{li2022riemannian} & general & compact submanifold & determine     & $\mathcal{O}(\epsilon^{-4})$ \\ \hline
   DSGM \cite{beck2023} & general & compact submanifold & determine     & $\mathcal{O}(\epsilon^{-3})$ \\ \hline
  \multirow{2}{*}{subgradient \cite{li2021weakly}} & \multirow{2}{*}{general}  &  \multirow{2}{*}{Stiefel manifold} & determine    &  $\mathcal{O}(\epsilon^{-4})$ \\ 
     & &  & stochastic      &  $\mathcal{O}(\epsilon^{-4})$ \\ \hline
      \multirow{2}{*}{smoothing \cite{peng2022riemannian}} & \multirow{2}{*}{general} &  \multirow{2}{*}{compact submanifold} & determine     & $\mathcal{O}(\epsilon^{-3})$ \\
 &  &  & stochastic    &  $\mathcal{O}(\epsilon^{-5})$ \\ \hline
  \multirow{2}{*}{ \textbf{this paper}} & \multirow{2}{*}{general} &  \multirow{2}{*}{compact submanifold} & determine  &   $\mathcal{O}(\epsilon^{-3})$\\
 & & & stochastic &     $\tilde{\mathcal{O}}(\epsilon^{-3.5})$\\
  \hline
\end{tabular}
 \label{tab1}
\end{table}

\subsection{Notation} Let $\left<\cdot,\cdot\right>$ and $\|\cdot \|$ be the Euclidean product and induced Euclidean norm. Given a matrix $A$, we use $\|A\|_F$ to denote the Frobenius norm, $\|A\|_1:=\sum_{ij}\vert A_{ij}\vert$ to denote the $\ell_1$ norm. For a vector $x$, we use $\|x\|_2$ and $\|x\|_1$ to denote its Euclidean norm and $\ell_1$ norm, respectively. The indicator function of a set $\mathcal{C}$, denoted by $\delta_{\mathcal{C}}$, is set to  0 on $\mathcal{C}$ and $+\infty$ otherwise. The distance from $x$ to $\mathcal{C}$ is denoted by $\mathrm{dist}(x,\mathcal{C}): = \min_{y\in\mathcal{C}}\|x-y\|$. We use $\nabla f(x)$ and $\grad f(x)$ to denote the Euclidean gradient and Riemannian gradient of $f$, respectively.

  \section{Preliminaries}\label{sec-preli}

In this section, we introduce some necessary concepts for Riemannian optimization and the Moreau envelope.
\subsection{Riemannian optimization}
 An $n$-dimensional smooth manifold $\mathcal{M}$ is an   $n$-dimensional topological manifold equipped with a smooth structure, where each point has a neighborhood that is diffeomorphism to the $n$-dimensional Euclidean space. For all $x\in\mathcal{M}$, there exists a chart $(U,\psi)$ such that $U$ is an open set and $\psi$ is a diffeomorphism between $U$ and an open set $\psi(U)$ in the Euclidean space.   A tangent vector $\eta_x$ to $\mathcal{M}$ at $x$ is defined as tangents of parametrized curves $\gamma$ on $\mathcal{M}$ such that $\gamma(0) = x$ and
\[\nonumber
  \eta_x u : = \dot{\gamma}(0)u = \left.\frac{d(u(\gamma(t)))}{dt} \right\vert_{t=0} , \forall u\in  \wp_x\mathcal{M},
\]
where $\wp_x\mathcal{M}$ is the set of all real-valued functions $f$ defined in a neighborhood of $x$ in $ \mathcal{M}$. Then, the tangent space $T_x\mathcal{M}$ of a manifold $\mathcal{M}$ at $x$ is defined as the set of all tangent vectors at point $x$.  The manifold $\mathcal{M}$ is called a Riemannian manifold if it is equipped with an inner product on the tangent space $T_x\mathcal{M}$ at each $x\in\mathcal{M}$. If $\mathcal{M}$ is a Riemannian submanifold of an Euclidean space $\mathcal{E}$, the inner product is defined as the Euclidean inner product: $\left<\eta_x,\xi_x\right> = \mathrm{tr}(\eta_x^\top \xi_x)$. The Riemannian gradient $\grad  f(x) \in T_x\mathcal{M}$ is the unique tangent vector satisfying
$$  \left< \grad f(x), \xi \right> = df(x)[\xi], \forall \xi\in T_x\mathcal{M}. $$ If $\mathcal{M}$ is a compact Riemannian manifold embedded in an Euclidean space, we have that $\grad f(x) = \mathcal{P}_{T_x\mathcal{M}}(\nabla f(x))$, where $\nabla f(x)$ is the Euclidean gradient, $\mathcal{P}_{T_x \mathcal{M}}$ is the projection operator onto the tangent space $T_x \mathcal{M}$. 
 The retraction operator is one of the most important ingredients for manifold optimization, which turns an element of $T_x\mathcal{M}$ into a point in $\mathcal{M}$.


\begin{definition}[Retraction, \cite{AbsMahSep2008}]\label{def-retr}
  A retraction on a manifold $\mathcal{M}$ is a smooth mapping $\mathcal{R}:T\mathcal{M}\rightarrow \mathcal{M}$ with the following properties. Let $\mathcal{R}_x:T_x\mathcal{M} \rightarrow \mathcal{M}$ be the restriction of $\mathcal{R}$ at $x$. It satisfies
\begin{itemize}
  \item $\mathcal{R}_x(0_x) = x$, where $0_x$ is the zero element of $T_x\mathcal{M}$,
  \item $d\mathcal{R}_x(0_x) = id_{T_x\mathcal{M}}$,where $id_{T_x\mathcal{M}}$ is the identity mapping on $T_x\mathcal{M}$.
\end{itemize}
\end{definition}
We have the following Lipschitz-type inequalities on the retraction on the compact submanifold.
\begin{proposition}[{\cite[Appendix B]{grocf}}]
    Let $\mathcal{R}$ be a retraction operator on a compact submanifold $\Mcal$. Then, there exist two positive constants $\alpha, \beta$ such that
    for all $x\in \mathcal{M}$ and  all $u \in T_{x}\mathcal{M}$, we have
   \begin{equation}\label{eq:retrac-lipscitz}
  \left\{ \begin{aligned}
     \|\mathcal{R}_x(u) - x\| &\leq \alpha \|u\|, \\
     \|\mathcal{R}_x(u) - x - u\| &\leq \beta \|u\|^2.
   \end{aligned}\right.
   \end{equation}
\end{proposition}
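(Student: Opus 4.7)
The plan is to combine a local second-order Taylor expansion of $\mathcal{R}$ around the zero section with a coarse global estimate coming from the compactness of $\mathcal{M}$, and then to choose $\alpha, \beta$ large enough to cover both regimes simultaneously.

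First I would work in a neighborhood of the zero section. Since $\mathcal{R}$ is smooth and, by the defining properties in Definition~\ref{def-retr}, $\mathcal{R}_x(0_x) = x$ together with $d\mathcal{R}_x(0_x) = \mathrm{id}_{T_x\mathcal{M}}$, viewing $\mathcal{R}$ as a smooth ambient-space-valued map on $T\mathcal{M}$ gives the expansion
\[
\mathcal{R}_x(u) = x + u + \tfrac12 D^2\mathcal{R}_x(0)[u,u] + R_x(u),
\]
where $R_x(u) = o(\|u\|^2)$ as $u \to 0_x$. The zero section $\{0_x : x \in \mathcal{M}\}$ is compact because $\mathcal{M}$ is compact, and $u \mapsto D^2\mathcal{R}_x(0)[u,u]$ depends continuously on $x$, so its operator norm is uniformly bounded in $x$. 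A standard uniform Taylor argument therefore produces constants $r > 0$ and $\beta_0 > 0$, independent of $x$, such that for all $\|u\| \le r$ one has
\[
\|\mathcal{R}_x(u) - x - u\| \le \beta_0 \|u\|^2 \qquad \text{and} \qquad \|\mathcal{R}_x(u) - x\| \le 2\|u\|.
\]

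Next I would dispose of the regime $\|u\| > r$ using compactness. Since $\mathcal{M}$ is a compact subset of the ambient Euclidean space, its diameter $D := \sup_{x,y\in\mathcal{M}} \|x - y\|$ is finite, and $\mathcal{R}_x(u) \in \mathcal{M}$ forces $\|\mathcal{R}_x(u) - x\| \le D$. For $\|u\| > r$ this yields
\[
\|\mathcal{R}_x(u) - x\| \le D \le \frac{D}{r}\|u\|, \qquad \|\mathcal{R}_x(u) - x - u\| \le D + \|u\| \le \Bigl(\frac{D}{r} + 1\Bigr)\|u\| \le \frac{D/r + 1}{r}\|u\|^2.
\]
Setting $\alpha := \max\{2, D/r\}$ and $\beta := \max\{\beta_0, (D/r + 1)/r\}$ then yields both inequalities for every $x \in \mathcal{M}$ and every $u \in T_x\mathcal{M}$.

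The main obstacle is making sure the constants in the Taylor remainder are uniform in the base point $x$, rather than merely pointwise. This rests on the smoothness of $\mathcal{R}$ as a map out of the whole tangent bundle $T\mathcal{M}$ and on the compactness of the zero section, which together upgrade pointwise continuity of $D^2\mathcal{R}_x(0)$ and of the remainder to a uniform bound; once this is in hand, the two-regime patching and the choice of $\alpha, \beta$ are routine.
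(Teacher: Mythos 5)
Your argument is correct, and it is essentially the standard proof of this fact: the paper itself states the proposition by citation to \cite[Appendix B]{grocf} without reproducing an argument, and the cited proof proceeds exactly as you do, combining a uniform first-order Taylor bound on the compact disc bundle $\{(x,u): x\in\mathcal{M},\ \|u\|\le r\}$ (using $\mathcal{R}_x(0_x)=x$, $d\mathcal{R}_x(0_x)=\mathrm{id}_{T_x\mathcal{M}}$, and smoothness of $\mathcal{R}$ on $T\mathcal{M}$) with the diameter bound $\|\mathcal{R}_x(u)-x\|\le D$ for $\|u\|>r$, then taking the maximum of the resulting constants. The only point to tighten in a final writeup is the uniformity of the Taylor remainder, which is most cleanly obtained from the integral form of the remainder together with a uniform bound on $D^2_u\mathcal{R}_x(u)$ over the compact disc bundle, rather than from the pointwise $o(\|u\|^2)$ statement.
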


%
Another basic ingredient for manifold optimization is the vector transport.
\begin{definition}[Vector Transport, \cite{AbsMahSep2008}]
 The vector transport $\mathcal{T}$ is a smooth mapping
 \begin{equation}
   T\mathcal{M}\oplus T\mathcal{M} \rightarrow T\mathcal{M}:(\eta_x,\xi_x)\mapsto \mathcal{T}_{\eta_x}(\xi_x)\in T\mathcal{M}
 \end{equation}
 satisfying the following properties for all $x\in\mathcal{M}$:
 \begin{itemize}
   \item for any $\xi_x\in T_x\mathcal{M}$, $\mathcal{T}_{0_x}\xi_x = \xi_x$,
   \item $\mathcal{T}_{\eta_x}(a\xi_x + b\gamma_x)  = a \mathcal{T}_{\eta_x}(\xi_x) + b\mathcal{T}_{\eta_x}(\gamma_x)$.
 \end{itemize}
 When there exists $\mathcal{R}$ such that $y=\mathcal{R}_{x}(\eta_x)$, we write $\mathcal{T}_{x}^y(\xi_x) = \mathcal{T}_{\eta_x}(\xi_x)$. 
\end{definition}
%

\subsection{Moreau envelope and retraction smoothness}

We first introduce the concept of the Moreau envelope.
\begin{definition}\label{def:moreau}
For a proper, convex and lower semicontinuous function $h:\mathbb{R}^n \rightarrow \mathbb{R}$, the Moreau envelope of $h$ with the parameter $\mu>0$ is given by
 \begin{equation*}
     M_h^{\mu}(y): = \inf_{z\in\mathbb{R}^n} \left\{h(z) + \frac{1}{2\mu }\|z- y\|^2  \right\}.
 \end{equation*}
\end{definition}

The following result demonstrates that the Moreau envelope of a convex function is not only continuously differentiable but also possesses a bounded gradient norm.
\begin{proposition}\label{propos-1}\cite[Lemma 3.3]{bohm2021variable}
Let $h$ be a proper, convex, and  $\ell_h$-Lipschitz continuous. Then,  the Moreau envelope $M_h^{\mu}$ has Lipschitz continuous gradient over $\mathbb{R}^n$ with constant $\mu^{-1}$, and the gradient is given by 
  \begin{equation}\label{moreau envelope gradient}
  \nabla M_h^{\mu}(x)  = \frac{1}{\mu}\left(x - \mathrm{prox}_{\mu h}(x) \right) \in \partial h\left(\mathrm{prox}_{\mu h}(x)\right),
  \end{equation}
where $\partial h$ denote the subdifferential of $h$,   $\mathrm{prox}_{\mu h}(x): = \arg\min_{y\in\mathbb{R}^n}
\{ h(y) + \frac{1}{2\mu}\|y-x\|^2 \}$ is the proximal operator. Moreover, it holds that
\begin{equation} \label{eq:moreau-gradient-bound}
\|\nabla  M_h^{\mu}(x)\| \leq \ell_h, \; \|x - \mathrm{prox}_{\mu h}(x) \| \leq \mu \ell_h. 
\end{equation}


\end{proposition}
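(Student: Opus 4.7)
The plan is to verify each of the three assertions by appealing to standard convex-analytic properties of the proximal mapping. Throughout, fix $x \in \mathbb{R}^n$ and write $z^\ast := \mathrm{prox}_{\mu h}(x)$. Since $h$ is proper, convex, and lower semicontinuous while $z \mapsto \frac{1}{2\mu}\|z - x\|^2$ is $\mu^{-1}$-strongly convex and coercive, the inner problem defining $M_h^\mu(x)$ admits a unique minimizer, so $z^\ast$ is well-defined.

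For the gradient formula \eqref{moreau envelope gradient}, I would write the first-order optimality condition for the strongly convex inner problem: $0 \in \partial h(z^\ast) + \frac{1}{\mu}(z^\ast - x)$, which rearranges to $\frac{1}{\mu}(x - z^\ast) \in \partial h(z^\ast)$. To promote this into the statement $\nabla M_h^\mu(x) = \frac{1}{\mu}(x - z^\ast)$, I would use a Danskin/envelope-type argument: the map $x \mapsto z^\ast(x)$ is (Lipschitz) continuous because the family of inner problems is uniformly strongly convex, and differentiating $M_h^\mu(x) = h(z^\ast(x)) + \frac{1}{2\mu}\|z^\ast(x) - x\|^2$ while using the inclusion above causes the terms involving $\mathrm{d}z^\ast/\mathrm{d}x$ to cancel, leaving precisely $\frac{1}{\mu}(x - z^\ast)$.

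For Lipschitz continuity of $\nabla M_h^\mu$ with constant $\mu^{-1}$, the cleanest route is firm nonexpansiveness of the proximal operator: for any $x, y \in \mathbb{R}^n$, the standard monotonicity inequality
\begin{equation*}
\langle \mathrm{prox}_{\mu h}(x) - \mathrm{prox}_{\mu h}(y),\, x - y \rangle \;\ge\; \|\mathrm{prox}_{\mu h}(x) - \mathrm{prox}_{\mu h}(y)\|^2
\end{equation*}
holds. This implies that $\mathrm{prox}_{\mu h}$ is $1$-Lipschitz, and also that $\mathrm{Id} - \mathrm{prox}_{\mu h}$ is firmly nonexpansive and hence $1$-Lipschitz. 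Dividing by $\mu$ yields the claimed bound $\|\nabla M_h^\mu(x) - \nabla M_h^\mu(y)\| \le \mu^{-1}\|x - y\|$.

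Finally, both bounds in \eqref{eq:moreau-gradient-bound} drop out of the inclusion already established. Since $h$ is globally $\ell_h$-Lipschitz and convex, every element of $\partial h(\cdot)$ has norm at most $\ell_h$; applying this to $\frac{1}{\mu}(x - z^\ast) \in \partial h(z^\ast)$ gives $\|\nabla M_h^\mu(x)\| \le \ell_h$, and multiplying through by $\mu$ gives $\|x - \mathrm{prox}_{\mu h}(x)\| \le \mu \ell_h$. The main technical point is the rigorous differentiability argument in the second step; the rest is routine convex-analytic bookkeeping.
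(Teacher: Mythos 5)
The paper does not prove this proposition at all --- it is imported verbatim as Lemma 3.3 of the cited reference --- so your argument has to stand on its own, and on its own terms it is essentially the standard textbook proof and it is correct. The optimality condition for the ($\mu^{-1}$-strongly convex) inner problem gives the inclusion $\frac{1}{\mu}(x - \mathrm{prox}_{\mu h}(x)) \in \partial h(\mathrm{prox}_{\mu h}(x))$; firm nonexpansiveness of $\mathrm{prox}_{\mu h}$, hence of $\mathrm{Id} - \mathrm{prox}_{\mu h}$, gives the $\mu^{-1}$-Lipschitz bound on the gradient; and the equivalence between global $\ell_h$-Lipschitz continuity of the convex function $h$ and the bound $\sup\{\|v\| : v \in \partial h(z),\ z \in \mathbb{R}^n\} \le \ell_h$ yields both inequalities in \eqref{eq:moreau-gradient-bound}. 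The one step you should tighten --- and which you yourself flag --- is the differentiability claim behind \eqref{moreau envelope gradient}. As literally written, ``differentiate $h(z^\ast(x)) + \frac{1}{2\mu}\|z^\ast(x)-x\|^2$ and let the $\mathrm{d}z^\ast/\mathrm{d}x$ terms cancel'' presupposes that $h$ and $x \mapsto z^\ast(x)$ are differentiable, and neither need be: $\mathrm{prox}_{\mu h}$ is only Lipschitz, and $h$ is the nonsmooth ingredient of the whole problem. Two clean repairs: (i) observe that $M_h^\mu$ is convex (partial minimization of a jointly convex function) and that the subdifferential calculus for such marginal functions identifies $\partial M_h^\mu(x)$ as the singleton $\{\frac{1}{\mu}(x - z^\ast(x))\}$, whence continuous differentiability; or (ii) test the infimum defining $M_h^\mu(y)$ with the suboptimal point $z^\ast(x)$ to get $M_h^\mu(y) - M_h^\mu(x) - \langle \frac{1}{\mu}(x - z^\ast(x)), y - x\rangle \le \frac{1}{2\mu}\|y-x\|^2$ for all $x,y$, and combine this one-sided quadratic bound with the Lipschitz continuity of the candidate gradient $x \mapsto \frac{1}{\mu}(x - z^\ast(x))$ (which you already established) to conclude differentiability and $\mu^{-1}$-smoothness in one stroke. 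With either repair the proof is complete; the remaining steps are airtight.
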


Next, we provide the definition of retraction smoothness. This concept plays a crucial role in the convergence analysis of the algorithms proposed in the subsequent sections.
\begin{definition}\cite[Retraction smooth]{grocf}\label{def:retr-smooth}
  A function $f:\mathcal{M}\rightarrow\mathbb{R}$ is said to be  retraction smooth (short to retr-smooth) with constant $\ell$ and a retraction $\mathcal{R}$,  if for   $\forall~x, y\in\mathcal{M}$  it holds that
  \begin{equation}\label{eq:taylor expansion}
    f(y) \leq f(x) + \left<\grad f(x), \eta\right> + \frac{\ell}{2}\|\eta\|^2,
  \end{equation}
  where $\eta\in T_x\mathcal{M}$ and $y= \mathcal{R}_x(\eta)$.
\end{definition}

Similar to the Lipschitz continuity of gradient in Euclidean space, we give the following extended definition of the Lipschitz continuity of the Riemannian gradient.

\begin{definition}\label{def:gradient-lipsch}
    The Riemannian gradient $\grad f$ is
Lipschitz continuous with respect to a vector transport $\mathcal{T}$ if there exists a positive constant $L$ such that for all $x, \xi \in T_x{\Mcal}, y = \mathcal{R}_x(\xi)$, 
\begin{equation}\label{eq:def:grad-lipschitz}
    \|\grad f(x) - \mathcal{T}_y^x(\grad f(y)) \| \leq L\| \xi \|. 
\end{equation}
\end{definition}
The following lemma shows that the Lipschitz continuity of $\grad f(x)$ can be deduced by the Lipschitz continuity of $\nabla f(x)$.
\begin{lemma}\label{lem:eucli-rieman-lipsch}
  Suppose that $\mathcal{M}$ is a compact submanifold embedded in the Euclidean space, given 
$x,y\in \mathcal{M}$ and $u\in T_x\mathcal{M}$, the vector transport $\mathcal{T}$ is defined as $\mathcal{T}_x^y(u): = D\mathcal{R}_x[\xi](u)$, where $y = \mathcal{R}_x(\xi)$. Denote $\zeta: =\max_{x\in \text{conv}(\mathcal{M})} \| D^2 \mathcal{R}_x(\cdot)\|$ and $G: = \max_{x\in \Mcal} \|\nabla f(x) \|$. Let $L_p$ be the Lipschitz constant of $\mathcal{P}_{T_{x} \Mcal}$ over $x \in \Mcal$. For a function $f$ with Lipschitz continuous gradient with constant $L$, i.e., $\|\nabla f(x) - \nabla f(y)\| \leq L\|x - y\|$, then we have that
    \begin{equation}
        \|\grad f(x) - \mathcal{T}_y^x (\grad f(y)) \| \leq ((\alpha L_p + \zeta) G + \alpha  L) \|\xi\|. 
    \end{equation}
\end{lemma}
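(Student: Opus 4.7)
The plan is to express both $\grad f(x)$ and $\mathcal{T}_y^x(\grad f(y))$ through the ambient Euclidean gradient and the tangent-space projection, and then split the difference into three telescoping pieces that can be bounded separately. Concretely, I would start from the embedding identity $\grad f(x)=\mathcal{P}_{T_x\mathcal{M}}\nabla f(x)$, write
$$\grad f(x) - \mathcal{T}_y^x(\grad f(y)) = \mathcal{P}_{T_x\mathcal{M}}\nabla f(x) - \mathcal{T}_y^x\mathcal{P}_{T_y\mathcal{M}}\nabla f(y),$$
and insert $\pm\,\mathcal{P}_{T_x\mathcal{M}}\nabla f(y)$ and $\pm\,\mathcal{P}_{T_y\mathcal{M}}\nabla f(y)$ to decompose it as
$$\mathcal{P}_{T_x\mathcal{M}}[\nabla f(x)-\nabla f(y)] + [\mathcal{P}_{T_x\mathcal{M}}-\mathcal{P}_{T_y\mathcal{M}}]\nabla f(y) + [I-\mathcal{T}_y^x]\mathcal{P}_{T_y\mathcal{M}}\nabla f(y).$$

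Next I would bound each piece in turn. The first piece is controlled by the nonexpansiveness of the orthogonal projection together with the Lipschitz continuity of $\nabla f$ and the first inequality in \eqref{eq:retrac-lipscitz}, giving a bound of $L\|x-y\|\leq \alpha L\|\xi\|$. The second piece exploits the $L_p$-Lipschitz continuity of $x\mapsto\mathcal{P}_{T_x\mathcal{M}}$ as an operator and the uniform bound $\|\nabla f(y)\|\leq G$, again combined with $\|x-y\|\leq\alpha\|\xi\|$, to produce the contribution $\alpha L_p G\|\xi\|$. Both of these steps are essentially bookkeeping once the decomposition is in place.

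The main obstacle is the third piece, which requires quantifying the deviation of the vector transport from the identity in the ambient norm. Here I would use that $\mathcal{T}$ is built from $D\mathcal{R}_x$ and that $D\mathcal{R}_x[0]=id_{T_x\mathcal{M}}$, so $\mathcal{T}_y^x$ reduces to the identity when $\xi=0$. A first-order Taylor expansion in $\xi$ applied to the map $\xi\mapsto\mathcal{T}_y^x$, combined with the uniform bound $\zeta=\max_{x\in\mathrm{conv}(\Mcal)}\|D^2\mathcal{R}_x(\cdot)\|$, then yields $\|(I-\mathcal{T}_y^x)v\|\leq\zeta\|\xi\|\,\|v\|$ for any tangent vector $v\in T_y\mathcal{M}$. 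Applying this with $v=\mathcal{P}_{T_y\mathcal{M}}\nabla f(y)$ and using $\|v\|\leq G$ produces $\zeta G\|\xi\|$. Summing the three bounds gives exactly $((\alpha L_p+\zeta)G+\alpha L)\|\xi\|$, as claimed.
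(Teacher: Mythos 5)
Your proof is correct and follows essentially the same route as the paper's: the same three-term decomposition (Lipschitz continuity of $\nabla f$ through the nonexpansive projection, Lipschitz continuity of $x\mapsto\mathcal{P}_{T_x\Mcal}$ against the bound $G$, and the second-derivative bound $\zeta$ on $\mathcal{R}$ to control $I-\mathcal{T}$, using $D\mathcal{R}(0)=\mathrm{id}$), each contributing $\alpha L\|\xi\|$, $\alpha L_p G\|\xi\|$, and $\zeta G\|\xi\|$ respectively. The only cosmetic difference is that the paper first applies a triangle inequality to $\|\grad f(y)-\grad f(x)\|+\|\grad f(x)-\mathcal{T}\grad f(x)\|$ and then splits the first term, whereas you insert the intermediate terms directly; the resulting bounds are identical.
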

\begin{proof}
    It follows from the definition of Riemannian gradient and vector transport that 
     \begin{equation}
        \begin{aligned}
          & \|\grad f(y) - \mathcal{T}_x^y (\grad f(x)) \| \\
           \leq &    \|\grad f(y) - \grad f(x) \| +  \|\grad f(x) - \mathcal{T}_x^y (\grad f(x)) \| \\
            \leq & \|\grad f(y) - \mathcal{P}_{T_x \Mcal}(\nabla f(y))\| + \| \mathcal{P}_{T_x \Mcal}(\nabla f(y) - \nabla f(x)) \|  \\
            & + \| D\mathcal{R}_x(0_x)[ \grad f(x)] - D\mathcal{R}_x(\xi)[\grad f(x)] \| \\
            \leq & (L_p G + L) \| \|y-x \| +  \zeta \|\xi\| \| \grad f(x)\| \\
            \leq & ((\alpha L_p + \zeta) G + \alpha  L) \|\xi\|,
        \end{aligned}
    \end{equation}
    where  the third inequality utilizes the Lipschitz continuity of $\nabla f(x)$ and $\mathcal{P}_{T_{x} \Mcal}$ over $x \in \Mcal$, the last inequality follows from \eqref{eq:retrac-lipscitz} and $\|\grad f(x)\| \leq \|\nabla f(x)\| \leq G$.    This proof is completed.
\end{proof}

\section{Augmented Lagrangian method}
In this section, we present an augmented Lagrangian method to solve \eqref{prob}. The subproblem is solved by the Riemannian gradient descent method. We also give the iteration complexity result.
Throughout this paper, we make the following assumptions.
\begin{assumption}\label{assum}
The following assumptions hold:
\begin{enumerate}
  \item[1.]
    The manifold $\mathcal{M}$ is a compact Riemannian submanifold embedded in  $\mathbb{R}^n$;
\item[2.]
    The function $f$ is $\ell_{\nabla f}$-smooth but not necessarily convex. 
  The function $h$ is convex and $\ell_h$-Lipschitz continuous but is not necessarily smooth; $\varphi$ is bounded from below.
  
\end{enumerate}
\end{assumption}

\subsection{KKT condition and stationary point}

We say $x \in \mathcal{M}$ is an $\epsilon$-stationary point of problem \eqref{prob} if there exists $\zeta \in \partial h(\mathcal{A} x)$ such that $\left\|\mathcal{P}_{T_x \mathcal{M}}\left(\nabla f(x)+\mathcal{A}^* \zeta\right)\right\| \leq \epsilon$. As in \cite{zhang2020complexity}, there is no finite time algorithm that can guarantee $\epsilon$-stationarity in the nonconvex nonsmooth setting. Motivated by the notion of $(\delta, \epsilon)$-stationarity introduced in \cite{zhang2020complexity}, we introduce a generalized $\epsilon$-stationarity for problem \eqref{prob} based on the Lagrangian function.

Since problem \eqref{prob} contains the nonsmooth term, we introduce the auxiliary variable $y = \mathcal{A}x$, which results in
the following equivalent split problem:
\begin{equation}\label{prob:split}
    \min_{x\in\mathcal{M},y\in \mathbb{R}^m} f(x) + h(y), \; \mathrm{s.t.}\; \mathcal{A}x = y. 
\end{equation}
The Lagrangian function $l:\mathcal{M}\times \mathbb{R}^m \times \mathbb{R}^m \rightarrow \mathbb{R}$ of \eqref{prob:split} is given by:
\begin{equation}\label{def:lagrangian}
    l(x,y,z) = f(x) + h(y) - \left<z, \mathcal{A}x - y\right>,
\end{equation}
where $z$ is the corresponding Lagrangian multiplier. Then the KKT condition of \eqref{prob} is given as follows \cite{deng2022manifold}: A point $x\in \mathcal{M}$ satisfies the KKT condition if there exists $y,z\in\mathbb{R}^m$ such that
\begin{equation}\label{def:kkt-deter}
    \left\{
\begin{aligned}
    0 &=  \mathcal{P}_{T_x\mathcal{M}}\left(\nabla f(x) - \mathcal{A}^*z \right), \\
    0 & \in z+ \partial h(y), \\
   0 & =  \mathcal{A}x -y.
\end{aligned}
    \right.
\end{equation}

Based on the KKT condition, we give the definition of $\epsilon$-stationary point for \eqref{prob}:
\begin{definition}\label{def:epsilon-deter}
We say $x\in\mathcal{M}$ is an $\epsilon$-stationary point of \eqref{prob} if there exists $y,z\in\mathbb{R}^m$ such that 
\begin{equation}\label{eq:epsi-kkt}
    \left\{
\begin{aligned}
    \left\|\mathcal{P}_{T_x\mathcal{M}}\left(\nabla f(x) - \mathcal{A}^*z \right) \right\|\leq \epsilon, \\
   \mathrm{dist}( -z, \partial h(y)) \leq 
\epsilon, \\
   \| \mathcal{A}x - y\| \leq \epsilon.
\end{aligned}
    \right.
\end{equation}
In other words, $(x,y,z)$ is an $\epsilon$-KKT point pair of \eqref{prob}. 
\end{definition}


\subsection{Augmented Lagrangian function of \eqref{prob}}
Before constructing the augmented Lagrangian function of \eqref{prob}, we first focus on the equivalent split problem \eqref{prob:split}. In particular, the augmented Lagrangian function of \eqref{prob:split} is constructed as follows:
\begin{equation}\label{eq:al:split}
\begin{aligned}
    \mathbb{L}_{\sigma}(x,y;z): &= l(x,y,z) + \frac{\sigma}{2}\|\mathcal{A}x - y\|^2\\
    & =f(x) + h(y) - \left<z, \mathcal{A}x - y\right> + \frac{\sigma}{2}\|\mathcal{A}x - y\|^2,
    \end{aligned}
\end{equation}
where $\sigma$ is the penalty parameter. It is shown that the augmented Lagrangian method involves the variable $x$ and the auxiliary variables $y$. However, for the original problem \eqref{prob}, which only features the variable $x$, the corresponding augmented Lagrangian function should solely include $x$ as well as the associated multipliers. 

An intuitive way of introducing the augmented Lagrangian function associated with \eqref{prob} is to eliminate the auxiliary variables in \eqref{eq:al:split}, i.e.,
\begin{equation}\label{eq:al:prob}
    \mathcal{L}_{\sigma}(x,z): = \min_{y\in\mathbb{R}^m} \mathbb{L}_{\sigma}(x,y;z).
\end{equation}
Note that the optimal solution $y^*$ of the above minimization problem  has the following closed-form solution:
\begin{equation}\label{eq:solution-y}
    y^* = \mathrm{prox}_{h/\sigma}(\mathcal{A}x - z/\sigma).
\end{equation}
Therefore, one can obtain the explicit form of $\mathcal{L}_{\sigma}(x,z)$:
\begin{equation}
    \begin{aligned}
        \mathcal{L}_{\sigma}(x,z) =  &~~ f(x) + g(\mbox{prox}_{ h/\sigma}(\mathcal{A}(x)- z/\sigma_k)) \\
             & + \frac{\sigma}{2}\left\|\mathcal{A}(x)- z/\sigma - \mbox{prox}_{ h/\sigma}(\mathcal{A}(x)- z/\sigma)\right\|_2^2 - \frac{1}{2\sigma}\|z\|_2^2 \\
  = & ~~ f(x) + M_{h}^{1/\sigma}\left( \mathcal{A}(x)-\frac{1}{\sigma} z\right) -\frac{1}{2\sigma}\|z\|_2^2,
    \end{aligned}
\end{equation}
where $M_{h}^{1/\sigma}$ is the Moreau envelope given in Definition \ref{def:moreau}. It is important to emphasize that the augmented Lagrangian function \eqref{eq:al:prob} only retains $x$ as the primal variable. The Lagrangian multiplier $z$ corresponds to the nonsmooth function $h$. 
When comparing $\mathcal{L}_{\sigma}$ with $\mathbb{L}_{\sigma}$, it becomes evident that $\mathcal{L}_{\sigma}$ involves fewer variables. We will also demonstrate that the augmented Lagrangian function \eqref{eq:al:prob} possesses favorable properties, such as being continuously differentiable. This characteristic proves to be advantageous when it comes to designing and analyzing algorithms \cite{deng2023augmented,li2018highly}.


\subsection{Algorithm}

As shown in \cite{deng2022manifold}, a manifold augmented Lagrangian method based on the augmented Lagrangian function \eqref{eq:al:split} can be written as
\begin{equation}\label{alm:split}
    \left\{
\begin{aligned}
    (x^{k+1},y^{k+1}) & = \arg\min_{x\in\mathcal{M}} \mathbb{L}_{\sigma_k}(x,y,z^k), \\
    z^{k+1} &= z^k - \sigma_k (\mathcal{A}x^{k+1} - y^{k+1}), \\
    \sigma_{k+1} &= \sigma_k \uparrow.  
\end{aligned}
    \right.
\end{equation}
By the definition of $\mathcal{L}_{\sigma}$ and \eqref{eq:solution-y}, we know that \eqref{alm:split} can be rewritten as
\begin{equation}\label{alm:original}
    \left\{
\begin{aligned}
    x^{k+1} & = \arg\min_{x\in\mathcal{M}} \mathcal{L}_{\sigma_k}(x,z^k), \\
    y^{k+1} & = \mathrm{prox}_{h/\sigma_k}(\mathcal{A}x^{k+1} - z^k/\sigma_k), \\
    z^{k+1} &= z^k - \sigma_k (\mathcal{A}x^{k+1} - y^{k+1}), \\
    \sigma_{k+1} &= \sigma_k \uparrow.  
\end{aligned}
    \right.
\end{equation}
We call \eqref{alm:original}  the augmented Lagrangian method based on \eqref{eq:al:prob} for \eqref{prob}.  The authors in \cite{deng2022manifold} show that an inexact version of \eqref{alm:original} converges to a stationary point. However, the iteration complexity result is unknown.  Motivated by \cite{sahin2019inexact}, we design a novel manifold inexact augmented Lagrangian framework, as outlined in Algorithm \ref{alg-manial} to efficiently find an $\epsilon$-KKT point of \eqref{prob}. 
The key distinction from \eqref{alm:original} lies in the update strategy for the Lagrangian multiplier $z^k$ and the careful selection of $\sigma_k$. This strategic choice will enable us to bound the norm of $z^k$ and ensures that $\frac{\|z^k\|}{\sigma_k}$ converges to zero. 

\begin{algorithm}[htb]
   \caption{(\textbf{ManIAL})}
   \label{alg-manial}
\begin{algorithmic}[1]
   \STATE {\bfseries Given: }   increasing positive sequence $\{\sigma_k\}_{k\geq 1}$, decreasing positive sequence $\{\epsilon_k\}_{k\geq 1}$.
   \STATE {\bfseries Initialize:  } Primal variable $x^0$, dual variable $z^0 = 0$, and dual step size $\beta_0$. Denote $y^0 = \mathcal{A} x^0$.  

  \FOR{$k=0,1,2,\cdots$}
   \STATE Apply subroutine to solve the following subproblem
   \be\label{sub-x}
 \min_{x\in \Mcal} \mathcal{L}_{\sigma_k}(x,z^k)
   \ee
   and return $x^{k+1}$ with two options:
   \begin{itemize}
       \item \textbf{Option I}: $x^{k+1}$ satisfies 
        \begin{equation}\label{eq:solve-x}
   \|\grad  \mathcal{L}_{\sigma_k}(x^{k+1},z^k)\| \leq \epsilon_k.
\end{equation}
\item \textbf{Option II}: return $x^{k+1}$ with the fixed iteration number $2^k$. 
   \end{itemize}

\STATE Update the auxiliary variable:
$$
    y^{k+1} = \mathrm{prox}_{h/\sigma_k}(\mathcal{A}x^{k+1} - z^k/\sigma_k).
$$
   \STATE Update the dual step size:
   \begin{equation}\label{iter:beta}
       \beta_{k+1} = \beta_0\min\left(\frac{\|\mathcal{A}x^0 - y^0\|\log^22}{\|\mathcal{A}x^{k+1}-y^{k+1}\|(k+1)^2\log(k+2)},1\right)
   \end{equation}
   \STATE Update the dual variable: \begin{equation}\label{iter:z}
       z^{k+1} = z^k - \beta_{k+1}(\mathcal{A}x^{k+1} - y^{k+1}).
   \end{equation}
   \ENDFOR
\end{algorithmic}
\end{algorithm}


In Algorithm \ref{alg-manial}, we introduce the dual stepsize $\beta_k$, which is used in the update of the dual variable $z^{k+1}$ (also called the Lagrangian multiplier). The update rule \eqref{iter:beta} of $\beta_{k+1}$ is to make sure that the dual variable $z^{k+1}$ is bounded when $k\rightarrow \infty$. The detailed derivation is given in the proof of Theorem \ref{the:milam:outer}.  Moreover, for the convenience of checking the termination criteria and subsequent convergence analysis, we introduce an auxiliary sequence denoted as $y^k$.  The main difference between our algorithm and the one presented in \cite{deng2022manifold} lies in our algorithm's reliance on a more refined selection of step sizes and improved control over the accuracy of $x$-subproblem solutions. Another crucial difference is the stopping criteria for the subproblem. In particular, we provide two options based on either a given residual or a given number of iterations, further enhancing the flexibility of the algorithm  These modifications allow us to derive results pertaining to the iterative complexity of the algorithm.

   The function $\mathcal{L}_{\sigma}(x,z)$ with respect to $x$ exhibits favorable properties, such as continuous differentiability, enabling a broader range of choices in selecting subproblem-solving algorithms. Let us denote $\psi_k(x) := \mathcal{L}_{\sigma_k}(x,z^k)$.
By the property of the proximal mapping and the Moreau envelope in Proposition \ref{propos-1}, one can readily conclude that $\psi_k(x)$ is continuously differentiable and
\begin{equation}\label{eq:grad-psik}
\begin{aligned}
  \nabla \psi_k(x) & = \nabla f(x) + \sigma_k \mathcal{A}^*\left(\mathcal{A}x-\frac{1}{\sigma_k} z^k - \mbox{prox}_{ h/\sigma_k}(\mathcal{A}(x)-\frac{1}{\sigma_k} z^k)\right).
  \end{aligned}
\end{equation}  
The main computational cost of Algorithm \ref{alg-manial} lies in solving the subproblem \eqref{sub-x}. 
In the next subsection, we will give a subroutine to solve \eqref{sub-x}. 

\subsection{Subproblem solver} \label{sec-sub}
In this subsection, we give a subroutine to find each $x^{k+1}$ in Algorithm \ref{alg-manial}. We first present a general algorithm for smooth problems on Riemannian manifolds, and then try to apply the algorithm to solve the subproblem \eqref{sub-x} in Algorithm \ref{alg-manial} by checking the related condition.   Let us focus on the following smooth problem on the Riemannian manifold:
\begin{equation} \label{prob:xsub}
    \min_{x\in \Mcal } \psi(x),
\end{equation}
where $\Mcal$ is a Riemannian submanifold embedded in Euclidean space.  Denote $\psi_{\min}: = \inf_{x\in \Mcal} \psi(x)$. We assume that $\psi$ is retr-smooth with $L$ and $\mathcal{R}$, i.e.,  
\begin{equation}\label{assum:retra-smooth}
    \psi(\mathcal{R}_x(\eta)) \leq \psi(x) + \langle \grad \psi(x), \eta \rangle + \frac{L}{2}\|\eta\|^2, ~ \forall x\in \Mcal, \eta\in T_x\Mcal.
\end{equation}

\begin{algorithm}[htb]
   \caption{Riemannian gradient descent method, $\text{RGD}(\psi, L, T,x_0)$ }
   \label{alg4}
\begin{algorithmic}[1]
   \STATE {\bfseries Given: }   $x_0\in\mathcal{M}$, $L>0$, an integer $T>0$.
   \FOR{$t = 0,1,\cdots, T$}
   \STATE Set the stepsize $\alpha_t = \frac{1}{L}$.
   \STATE Compute the next iterate $x_{t+1}$:
   \begin{equation}\label{iter:xk}
  x_{t+1} = \mathcal{R}_{x_t}(-\alpha_t \grad \psi(x_t)).
\end{equation}
   \ENDFOR
   
   \textbf{Output:} $x_* = x_{t^*}$, where $t^* = \arg\min_{0\leq t\leq T} \|\grad \psi(x_t)\|$. 
\end{algorithmic}
\end{algorithm}

We adopt a Riemannian gradient descent method (RGD) \cite{AbsMahSep2008,boumal2023intromanifolds} to solve problem \eqref{prob:xsub}.  The detailed algorithm is given in Algorithm \ref{alg4}. The following lemma gives the iteration complexity result of Algorithm \ref{alg4}.
\begin{lemma}[Iteration complexity of RGD]\label{lem:mialm2:rgd}
Suppose that $\psi$ is retr-smooth with constant $L$ and a retraction operator $\mathcal{R}$.  Let  $\{x_t\}$ be the iterative sequence of Algorithm \ref{alg4}. Given an integer $T$, it holds that
  \begin{equation*}
    \| \grad \psi(x_*)\| \leq \frac{\sqrt{2L(\psi(x_0) - \psi_{\min})}}{\sqrt{T}}. 
  \end{equation*} 
\end{lemma}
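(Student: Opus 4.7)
The plan is to derive a one-step descent inequality from the retr-smoothness of $\psi$, telescope it across the $T+1$ iterations, and then convert the resulting bound on the sum of squared Riemannian gradients into a bound on the minimum by a standard averaging argument.

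First, I would instantiate the retr-smooth inequality \eqref{assum:retra-smooth} along the update direction $\eta_t = -\alpha_t \grad \psi(x_t)$. Since $x_{t+1} = \mathcal{R}_{x_t}(\eta_t)$, this directly yields
\begin{equation*}
\psi(x_{t+1}) \leq \psi(x_t) - \alpha_t \|\grad \psi(x_t)\|^2 + \frac{L \alpha_t^2}{2}\|\grad \psi(x_t)\|^2.
\end{equation*}
Plugging in the choice $\alpha_t = 1/L$ collapses the right-hand side to the clean descent bound
\begin{equation*}
\psi(x_{t+1}) \leq \psi(x_t) - \frac{1}{2L}\|\grad \psi(x_t)\|^2,
\end{equation*}
which is the core estimate driving everything else.

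Next I would telescope this inequality from $t=0$ to $t=T$, obtaining
\begin{equation*}
\frac{1}{2L}\sum_{t=0}^{T}\|\grad \psi(x_t)\|^2 \leq \psi(x_0) - \psi(x_{T+1}) \leq \psi(x_0) - \psi_{\min},
\end{equation*}
where the last inequality uses the assumed lower bound $\psi_{\min}$. Since the minimum of a collection of nonnegative numbers is at most their average, and $t^*$ is chosen to minimize $\|\grad \psi(x_t)\|$ over $t=0,\dots,T$, one gets
\begin{equation*}
(T+1)\,\|\grad \psi(x_*)\|^2 \leq \sum_{t=0}^{T}\|\grad \psi(x_t)\|^2 \leq 2L\bigl(\psi(x_0) - \psi_{\min}\bigr),
\end{equation*}
and taking square roots (and bounding $T+1 \geq T$) yields the claimed inequality.

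There is no real obstacle here: the retr-smoothness hypothesis already encapsulates the only nontrivial geometric fact, namely that moving along a tangent vector and then retracting behaves like a Euclidean step up to the quadratic remainder term with constant $L$. Consequently the argument reduces to the textbook Euclidean gradient-descent complexity proof. The only minor care needed is consistency of the iteration count: the loop runs for $T+1$ steps, so the telescoping sum has $T+1$ terms, which is why the final bound is stated with $\sqrt{T}$ in the denominator (using $T+1 \geq T$).
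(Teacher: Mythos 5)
Your proposal is correct and follows essentially the same route as the paper: the one-step descent inequality from retr-smoothness with $\alpha_t = 1/L$, telescoping over the iterations, and bounding the minimum gradient norm by the average. Your handling of the iteration count ($T+1$ terms in the sum, then $T+1 \geq T$) is in fact slightly more careful than the paper's, which is loose about whether the sum runs to $T-1$ or $T$.
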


\begin{proof}
    It follows from \eqref{assum:retra-smooth} and \eqref{iter:xk} that
    \begin{equation}
    \begin{aligned}
         \psi(x_{t+1}) & \leq \psi(x_t) - \alpha_t\langle \grad \psi(x_t), \grad \psi(x_t) \rangle + \frac{\alpha_t^2 L}{2}\|\grad \psi(x_t)\|^2 \\
         & \leq \psi(x_t) - \frac{1}{2L}\|\grad \psi(x_t)\|^2.
    \end{aligned}    
    \end{equation}
    By summing both sides of the above inequality over $t=0,\cdots, T-1$, we get
    \begin{equation}
        \psi(x_{T}) \leq \psi(x_0) - \frac{1}{2L}\sum_{t=0}^T \|\grad \psi(x_t)\|^2. 
    \end{equation}
    This implies that 
    \begin{equation}
     \|\grad \psi(x_*)\| =    \min_{0\leq t \leq T } \|\grad \psi(x_t)\| \leq \frac{\sqrt{2L(\psi(x_0) - \psi_{\min})}}{\sqrt{T}}.
    \end{equation}
   We complete the proof. 
\end{proof}

To apply Algorithm \ref{alg4} for computing $x^{k+1}$ in Algorithm \ref{alg-manial}, it is crucial to demonstrate that $\psi_k(x)$ is retr-smooth, meaning it satisfies \eqref{assum:retra-smooth}. The following lemma provides the essential insight.
\begin{lemma}\label{Euclidean vs manifold} 
  Suppose that Assumption \ref{assum} holds.  Then, for $\psi_k$, there exists finite $G$ such that $\|\nabla \psi_k(x)\|\leq G$ for all $x\in\mathcal{M}$, and $\psi_k$ is retr-smooth in the sense that
  \begin{equation}\label{equ:L2}
    \psi_k(\mathcal{R}_{x}(\eta)) \leq \psi_k(x) + \left<\eta,\grad \psi_k(x)\right> + \frac{L_{k}}{2}\|\eta\|^2
  \end{equation}
  for all $\eta\in T_{x}\mathcal{M}$, where $L_{k} = \alpha^2 (\ell_{\nabla f} + \sigma_k \|\mathcal{A}\|^2 )  +2 G \beta.$   
\end{lemma}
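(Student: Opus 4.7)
The plan is to build the proof in three stages: first bound $\|\nabla \psi_k(x)\|$ uniformly on $\mathcal{M}$, then establish Euclidean Lipschitz continuity of $\nabla \psi_k$, and finally pass to a retr-smooth estimate using the retraction inequalities \eqref{eq:retrac-lipscitz}.

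For the boundedness of $\nabla \psi_k$, I would start from the explicit formula \eqref{eq:grad-psik}, which upon recognizing the Moreau-envelope derivative can be rewritten as $\nabla \psi_k(x) = \nabla f(x) + \mathcal{A}^\ast \nabla M_h^{1/\sigma_k}(\mathcal{A}x - z^k/\sigma_k)$. By Proposition \ref{propos-1}, $\|\nabla M_h^{1/\sigma_k}(\cdot)\|\leq \ell_h$, and compactness of $\mathcal{M}$ with continuity of $\nabla f$ yield a finite bound on $\|\nabla f(x)\|$ over $\mathcal{M}$. Combining these gives $\|\nabla \psi_k(x)\|\leq G$ with $G := \max_{x\in\mathcal{M}}\|\nabla f(x)\| + \|\mathcal{A}\|\,\ell_h$.

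Next, to control the second-order behavior in the ambient space, I would use $\ell_{\nabla f}$-smoothness of $f$ together with the fact (Proposition \ref{propos-1}) that $\nabla M_h^{1/\sigma_k}$ is $\sigma_k$-Lipschitz. This yields the Euclidean descent-type inequality
\begin{equation*}
\psi_k(\bar x) \leq \psi_k(x) + \langle \nabla \psi_k(x),\bar x - x\rangle + \tfrac{1}{2}(\ell_{\nabla f} + \sigma_k\|\mathcal{A}\|^2)\|\bar x - x\|^2
\end{equation*}
for any $x,\bar x\in\mathbb{R}^n$.

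Finally, set $\bar x = \mathcal{R}_x(\eta)$ with $\eta\in T_x\mathcal{M}$. Writing $\bar x - x = \eta + (\mathcal{R}_x(\eta)-x-\eta)$, I would split the linear term: since $\eta$ is tangent, $\langle \nabla \psi_k(x),\eta\rangle = \langle \grad \psi_k(x),\eta\rangle$, while the remainder is controlled by Cauchy--Schwarz together with $\|\mathcal{R}_x(\eta)-x-\eta\|\leq \beta\|\eta\|^2$ and the uniform bound $G$, giving a term bounded by $\beta G\|\eta\|^2$. The quadratic term is bounded using $\|\mathcal{R}_x(\eta)-x\|\leq \alpha\|\eta\|$, contributing $\tfrac{1}{2}\alpha^2(\ell_{\nabla f}+\sigma_k\|\mathcal{A}\|^2)\|\eta\|^2$. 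Summing yields \eqref{equ:L2} with precisely $L_k = \alpha^2(\ell_{\nabla f}+\sigma_k\|\mathcal{A}\|^2) + 2G\beta$.

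The main obstacle is really just bookkeeping: correctly identifying the Moreau-envelope gradient in \eqref{eq:grad-psik} so that the $\sigma_k$-dependence collapses into the clean Lipschitz constant $\sigma_k\|\mathcal{A}\|^2$, and then being careful with the ``linear part of the retraction error'' that produces the extra $2G\beta$ term rather than, say, a higher-order contribution. No deeper geometric machinery is needed beyond the retraction Lipschitz estimates already recorded in \eqref{eq:retrac-lipscitz}.
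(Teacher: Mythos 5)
Your proposal is correct and follows essentially the same route as the paper: bound $\|\nabla\psi_k\|$ via the Moreau-envelope gradient bound of Proposition \ref{propos-1}, get Euclidean $(\ell_{\nabla f}+\sigma_k\|\mathcal{A}\|^2)$-smoothness from the $\sigma_k$-Lipschitz gradient of $M_h^{1/\sigma_k}$, and transfer to retr-smoothness via \eqref{eq:retrac-lipscitz}. The only difference is that you carry out the transfer step (splitting $\mathcal{R}_x(\eta)-x$ into $\eta$ plus a second-order remainder) explicitly, whereas the paper delegates exactly this computation to Lemma 2.7 of \cite{grocf}; your bookkeeping reproduces the constant $L_k=\alpha^2(\ell_{\nabla f}+\sigma_k\|\mathcal{A}\|^2)+2G\beta$ correctly.
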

\begin{proof}
    By Proposition \ref{propos-1}, one shows that $\nabla \psi_k$  is Lipschitz continuous with constant $ \ell_{\nabla f} + \sigma_k \|\mathcal{A}\|^2 $.  Since $\nabla \psi_k$ is continuous on the compact manifold $\mathcal{M}$, it follows from \eqref{eq:moreau-gradient-bound} that there exists $G>0$ such that $\|\nabla \psi_k(x)\|\leq G$ for all $x\in\mathcal{M}$ and any $k>0$. The proof is completed by combining \eqref{eq:retrac-lipscitz} with Lemma 2.7 in \cite{grocf}. 
\end{proof}

With Lemmas \ref{lem:mialm2:rgd} and \ref{Euclidean vs manifold}, we can apply Algorithm \ref{alg4} to find each $x^{k+1}$. The lemma below gives the inner iteration
complexity for the $k$-th outer iteration of Algorithm \ref{alg-manial}.

\begin{lemma}\label{theo:rgd}
   Suppose that Assumption \ref{assum} holds.  Let $(x^k,y^k,z^k)$ be the $k$-iterate generated in Algorithm \ref{alg-manial}. We can find $x^{k+1}$ with \textbf{Option I} by the following call
    \begin{equation}
      x^{k+1} = RGD(\psi_k, L_k, T_k,x^k),
    \end{equation}
    where 
    $
    L_k = \alpha^2 (\ell_{\nabla f} + \sigma_k \|\mathcal{A}\|^2 )  +2 G \beta,~~T_k = \frac{L_k(\psi_k(x^k) - \psi_{k,\min} )}{\epsilon_k^2} 
    $, and $\psi_{k,\min} = \inf_{x\in \Mcal} \psi_k(x)$. 
    Moreover, we can also find $x^{k+1}$ with \textbf{Option II} by the following call
     \begin{equation}
      x^{k+1} = RGD(\psi_k, L_k, 2^k,x^k),
    \end{equation}
This implies that
\begin{equation}
 \| \grad \psi_k(x^{k+1}) \|\leq  \frac{\sqrt{L_k(\psi_k(x^k) - \psi_{k,\min})}}{\sqrt{2^k}}.
\end{equation}
\end{lemma}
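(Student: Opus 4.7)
The plan is to combine the preceding two lemmas (retr-smoothness of $\psi_k$ and the iteration complexity of Algorithm \ref{alg4}) essentially as a black box. First I would verify that the subproblem objective $\psi_k(x) = \mathcal{L}_{\sigma_k}(x,z^k)$ fits the hypotheses of Algorithm \ref{alg4}. By Lemma \ref{Euclidean vs manifold}, $\psi_k$ is retr-smooth in the sense of \eqref{assum:retra-smooth} with the exact constant
\[
L_k = \alpha^2 (\ell_{\nabla f} + \sigma_k \|\mathcal{A}\|^2) + 2G\beta,
\]
and $\psi_k$ is continuous on the compact manifold $\mathcal{M}$, so $\psi_{k,\min} = \inf_{x\in\mathcal{M}} \psi_k(x)$ is finite and $\psi_k(x^k) - \psi_{k,\min} < \infty$. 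This justifies calling $\mathrm{RGD}(\psi_k, L_k, T, x^k)$ for any integer $T>0$.

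Next I would invoke Lemma \ref{lem:mialm2:rgd} with $\psi = \psi_k$, $x_0 = x^k$, and the Lipschitz constant $L_k$. The lemma yields the inequality
\[
\|\grad \psi_k(x_*)\| \leq \frac{\sqrt{2 L_k (\psi_k(x^k) - \psi_{k,\min})}}{\sqrt{T}},
\]
where $x_*$ is the RGD output after $T$ steps. For \textbf{Option II}, I would simply substitute $T = 2^k$; this directly produces the stated bound on $\|\grad \psi_k(x^{k+1})\|$ (up to the harmless constant $\sqrt{2}$, which can be absorbed into $L_k$ in the statement). For \textbf{Option I}, I would impose $\|\grad \psi_k(x^{k+1})\| \leq \epsilon_k$ by demanding that the right-hand side above is at most $\epsilon_k$; solving yields the sufficient iteration count $T_k \geq 2L_k(\psi_k(x^k) - \psi_{k,\min})/\epsilon_k^2$, matching the expression in the lemma statement (again up to a constant factor).

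There is really no serious obstacle here: both Option I and Option II are immediate arithmetic consequences of the RGD complexity bound once retr-smoothness has been established. The only conceptual point worth flagging is the need to justify the finiteness of $\psi_k(x^k) - \psi_{k,\min}$ and the fact that the single Lipschitz constant $L_k$ (depending on $\sigma_k$) is valid uniformly along the trajectory of RGD; both are handled by compactness of $\mathcal{M}$ together with Lemma \ref{Euclidean vs manifold}, which supplies a uniform gradient bound $G$ and the retr-smoothness inequality on the whole manifold, not just near the starting iterate.
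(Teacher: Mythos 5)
Your proposal is correct and follows essentially the same route as the paper: establish retr-smoothness of $\psi_k$ with constant $L_k$ via Lemma \ref{Euclidean vs manifold}, then plug $T = T_k$ (resp.\ $T = 2^k$) into the RGD complexity bound of Lemma \ref{lem:mialm2:rgd}. You are in fact slightly more careful than the paper's own proof, which silently drops the factor of $2$ from the bound $\sqrt{2L_k(\psi_k(x^k)-\psi_{k,\min})}/\sqrt{T}$ when stating $T_k$ and the Option II estimate; your remark that this constant can be absorbed is the right way to reconcile the discrepancy.
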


\begin{proof}
    For \textbf{Option I}, by Lemma \ref{lem:mialm2:rgd}, we find $x^{k+1}$ that satisfy 
    \begin{equation}
        \| \grad \psi_k(x^{k+1}) \|\leq  \frac{\sqrt{L_k(\psi_k(x^k) - \psi_{k,\min})}}{T_k} \leq \epsilon_k.
    \end{equation}
    When $T_k = \frac{L_k(\psi_k(x^k) - \psi_{k,\min} )}{\epsilon_k^2} 
    $, the above inequality holds. For \textbf{Option II}, it directly follows from Lemma \ref{lem:mialm2:rgd}. 
\end{proof}

\subsection{Convergence analysis of \textbf{ManIAL}}
We have shown the oracle complexity for solving the subproblems of \textbf{ManIAL}. Now, let us first investigate the outer iteration complexity and conclude this subsection with total oracle complexity.
\subsubsection{Outer iteration complexity of \textbf{ManIAL}}
Without specifying a subroutine to obtain $x^{k+1}$, we establish the outer iteration complexity
result of Algorithm \ref{alg-manial} in the following lemma.
\begin{theorem}[Iteration complexity of \textbf{ManIAL} with \textbf{Option I}]\label{the:milam:outer}
    Suppose that Assumption \ref{assum} holds. For  $b>1$,  if $\sigma_k = b^k$ and $\epsilon_k = 1/\sigma_k$, given $\epsilon>0$, then Algorithm \ref{alg-manial} with \textbf{Option I} needs at most $K:= \log_{b}\left(\frac{\ell_h + \|z_{\max}\|+1}{ \epsilon}\right)$ iterations to produce an $\epsilon$-KKT solution pair $(x^{K+1},y^{K+1},\bar{z}^{K+1})$ of \eqref{eq:epsi-kkt}, where 
    \begin{equation}\label{def:zbar}
       \bar{z}^{k+1}: = z^k -  \sigma_k(\mathcal{A}x^{k+1} - y^{k+1}), \forall k\geq 0, ~~ z_{\max}: = \frac{\beta_0 \pi^2 }{6} \|\mathcal{A}x^0 - y^0\|.  
    \end{equation}
\end{theorem}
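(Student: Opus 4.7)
The plan is to verify each of the three inequalities in \eqref{eq:epsi-kkt} at the triple $(x^{K+1},y^{K+1},\bar{z}^{K+1})$, and then choose $K$ large enough that every residual falls below $\epsilon$. The starting observation is that at $x=x^{k+1}$, substituting $y^{k+1}=\mathrm{prox}_{h/\sigma_k}(\mathcal{A}x^{k+1}-z^k/\sigma_k)$ into \eqref{eq:grad-psik} and using the definition of $\bar{z}^{k+1}$ collapses the expression to $\nabla\psi_k(x^{k+1})=\nabla f(x^{k+1})-\mathcal{A}^{*}\bar{z}^{k+1}$. Projecting onto $T_{x^{k+1}}\Mcal$ and invoking Option~I then yields $\|\mathcal{P}_{T_{x^{k+1}}\Mcal}(\nabla f(x^{k+1})-\mathcal{A}^{*}\bar{z}^{k+1})\|\leq\epsilon_k=1/\sigma_k$, which handles the first KKT inequality.

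The second condition is obtained \emph{exactly}: the optimality of the proximal step yields $-\bar{z}^{k+1}\in\partial h(y^{k+1})$, so $\mathrm{dist}(-\bar{z}^{k+1},\partial h(y^{k+1}))=0$. As a by-product, the $\ell_h$-Lipschitz continuity of $h$ gives $\|\bar{z}^{k+1}\|\leq\ell_h$. For the third condition, rearranging $\bar{z}^{k+1}=z^k-\sigma_k(\mathcal{A}x^{k+1}-y^{k+1})$ yields $\|\mathcal{A}x^{k+1}-y^{k+1}\|\leq(\|z^k\|+\ell_h)/\sigma_k$, so the only remaining ingredient is a uniform bound on $\|z^k\|$.

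The hardest step, and the whole motivation for the intricate dual stepsize rule \eqref{iter:beta}, is to prove $\|z^k\|\leq z_{\max}$ for every $k$. Unwinding the recursion $z^{k+1}=z^k-\beta_{k+1}(\mathcal{A}x^{k+1}-y^{k+1})$ from $z^0=0$ and taking norms gives $\|z^{k+1}\|\leq\sum_{j=0}^{k}\beta_{j+1}\|\mathcal{A}x^{j+1}-y^{j+1}\|$. The point of the $\min$ construction in \eqref{iter:beta} is that in both branches one obtains the common estimate
\[
\beta_{j+1}\|\mathcal{A}x^{j+1}-y^{j+1}\|\leq\beta_0\|\mathcal{A}x^0-y^0\|\cdot\frac{\log^2 2}{(j+1)^2\log(j+2)},
\]
and since $\log^2 2\leq\log(j+2)$ for every $j\geq 0$, summing and using $\sum_{j\geq 1}1/j^2=\pi^2/6$ delivers $\|z^{k+1}\|\leq z_{\max}$. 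Combining the three residual bounds, all KKT inequalities hold with accuracy $\epsilon$ whenever $\sigma_k\geq(\ell_h+z_{\max}+1)/\epsilon$; since $\sigma_k=b^k$, this first occurs at $k=K=\log_b\!\left((\ell_h+\|z_{\max}\|+1)/\epsilon\right)$, as claimed.
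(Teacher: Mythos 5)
Your proposal is correct and follows essentially the same route as the paper: Option I plus the proximal optimality condition give the first two KKT residuals with multiplier $\bar{z}^{k+1}$, the dual stepsize rule \eqref{iter:beta} is unwound to get the uniform bound $\|z^k\|\leq z_{\max}$, and the feasibility residual is then bounded by $(\ell_h+\|z^k\|)/\sigma_k$. The only cosmetic difference is that you obtain this last bound from $\|\bar{z}^{k+1}\|\leq\ell_h$ (the subgradient bound for a Lipschitz convex $h$) rather than from the Moreau-envelope estimate \eqref{eq:moreau-gradient-bound}, which is the same fact; your explicit case analysis of the two branches of the $\min$ in \eqref{iter:beta} is if anything slightly more careful than the paper's.
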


\begin{proof}
    It follows from the update rule of $x^{k+1}$ in Algorithm \ref{alg-manial} with \textbf{Option I} and \eqref{eq:grad-psik} that
    \begin{equation}\label{eq:x-epsilonk}
    \left\|\mathcal{P}_{T_x\mathcal{M}} \left( 
 \nabla f(x^{k+1}) + \sigma_k \mathcal{A}^*\left( 
\mathcal{A}x^{k+1} - z^k/\sigma_k - \mathrm{prox}_{h/\sigma_k}(
\mathcal{A}x^{k+1} - z^k/\sigma_k) \right) \right)  \right\| \leq \epsilon_k.
\end{equation}
Moreover, the update rule of $y^{k+1}$ implies that
\begin{equation}\label{eq:y-epsilonk}
   0 \in z^k -  \sigma_k(\mathcal{A}x^{k+1} - y^{k+1}) + \partial h(y^{k+1}). 
\end{equation}
Combining \eqref{eq:x-epsilonk}, \eqref{eq:y-epsilonk} and the definition of $\bar{z}^{K+1}$ in \eqref{def:zbar} yields
 \begin{equation}\label{eq:kkt-x-y}
 \left\{
        \begin{aligned}
\left\|\mathcal{P}_{T_{x^{k+1}}\mathcal{M}} \left( 
 \nabla f(x^{k+1}) + \mathcal{A}^*  \bar{z}^{k+1} \right)  \right\| \leq \epsilon_k, \\
\mathrm{dist}(-\bar{z}^{k+1}, \partial h(y^{k+1})) = 0.
    \end{aligned}
    \right.
    \end{equation}
Before bounding the feasibility condition $\|\mathcal{A}x^{k+1} - y^{k+1}\|$, we first give a uniform upper bound of the dual variable $z^{k+1}$. By \eqref{iter:beta}, \eqref{iter:z}, and $z^0 = 0$, we have that for any $k>1$,
\begin{equation}
\begin{aligned}
    \|z^{k+1}\| & \leq \sum_{l=1}^{k+1} \beta_l \|\mathcal{A}x^l - y^l\|
 \leq \sum_{l=1}^{\infty}\beta_l \|\mathcal{A}x^l - y^l\| \\
& \leq  \beta_0 \|\mathcal{A}x^0 - y^0\| \log^2 2 \sum_{l=1}^{\infty} \frac{1}{(l+1)^2\log(l+2)} \\
&\leq  \frac{\beta_0 \pi^2 }{6} \|\mathcal{A}x^0 - y^0\| = z_{\max},
\end{aligned}
\end{equation}
where the last inequality utilize that $\log(l+2)>1$ for $l\geq 1$ and $\sum_{l=1}^{\infty} \frac{1}{(l+1)^2} = \frac{\pi^2}{6}$.  
For the feasibility bound $\|\mathcal{A}x^{k+1} - y^{k+1}\|$, we have that for all $k\geq 1$,
\begin{equation}
    \begin{aligned}
        \|\mathcal{A}x^{k+1} - y^{k+1}\| & \leq \|\mathcal{A}x^{k+1} - z^{k}/\sigma_k - y^{k+1}\| + \frac{1}{\sigma_k}\|z^k\| \\
        & \leq \frac{\ell_h + \|z^k\|}{\sigma_k} \leq \frac{\ell_h + \|z_{\max}\|}{\sigma_k},
    \end{aligned}
\end{equation}
where the second inequality utilizes \eqref{eq:moreau-gradient-bound}. By the definition of $\sigma_k, \epsilon_k$ and $K$, we have that 
\begin{equation}\label{eq:kkt-x-y-z}
 \left\{
        \begin{aligned}
\left\|\mathcal{P}_{T_{x^{K+1}}\mathcal{M}} \left( 
 \nabla f(x^{K+1}) + \mathcal{A}^*  \bar{z}^{K+1} \right)  \right\| &\leq \epsilon, \\
\mathrm{dist}(-\bar{z}^{K+1}, \partial h(y^{K+1})) &= 0,\\
\| \mathcal{A}x^{K+1} - y^{K+1}\| &\leq \epsilon.
    \end{aligned}
    \right.
    \end{equation}
Therefore, $(x^{K+1},y^{K+1},\bar{z}^{K+1})$ is an $\epsilon$-KKT point of \eqref{eq:epsi-kkt}. 
\end{proof}

Analogously, we also have the following outer iteration complexity for \textbf{ManIAL} with \textbf{Option II}.
\begin{theorem}[Iteration complexity of \textbf{ManIAL} with \textbf{Option II}]\label{manial2-iter}
Suppose that Assumption \ref{assum} holds.   Let $\sigma_k = \frac{1}{2^{k/3}}$.  Given $\epsilon>0$, Algorithm \ref{alg-manial} with \textbf{Option II} needs at most $$K := \log_2\left( \frac{(\sqrt{2c_1}+ \ell_h + z_{\max})^3}{\epsilon^3} \right)$$ iterations to produce an $\epsilon$-KKT point pair $(x^{K+1},y^{K+1},\bar{z}^{K+1})$, where $c_1$ is defined in the proof. $\bar{z}^{K+1}$ and $z_{\max}$ are defined in \eqref{def:zbar}. 
\end{theorem}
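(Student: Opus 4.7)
The plan is to mirror the argument of Theorem \ref{the:milam:outer}, with the only substantive change being that the subproblem accuracy is no longer a hand-chosen tolerance $\epsilon_k$ but is instead the output of running RGD for $2^k$ iterations. I will treat $\sigma_k = 2^{k/3}$ throughout, since this is the only reading consistent with the algorithm's requirement that $\{\sigma_k\}$ be increasing.

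First, invoke the \textbf{Option II} part of Lemma \ref{theo:rgd} to obtain $\|\grad \psi_k(x^{k+1})\| \leq \sqrt{L_k(\psi_k(x^k) - \psi_{k,\min})}/\sqrt{2^k}$. The key is to control the numerator by $2c_1\sigma_k$ uniformly in $k$. By Lemma \ref{Euclidean vs manifold}, $L_k = \alpha^2(\ell_{\nabla f} + \sigma_k\|\mathcal{A}\|^2) + 2G\beta$, which is bounded by a fixed multiple of $\sigma_k$ once $\sigma_k \geq 1$. For the gap $\psi_k(x^k) - \psi_{k,\min}$, I would combine (i) compactness of $\mathcal{M}$ and continuity of $f$; (ii) Proposition \ref{propos-1} together with the $\ell_h$-Lipschitzness of $h$, which shows that $M_h^{1/\sigma_k}(y)$ differs from $h(y)$ by at most $\ell_h^2/\sigma_k$ and hence is uniformly bounded for $y$ ranging over the bounded set $\{\mathcal{A}x - z^k/\sigma_k : x \in \mathcal{M}\}$; and (iii) the uniform bound $\|z^k\| \leq z_{\max}$ from Theorem \ref{the:milam:outer}, which makes $\|z^k\|^2/(2\sigma_k) = O(1/\sigma_k)$. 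Absorbing all constants into a single $c_1$ then gives $L_k(\psi_k(x^k) - \psi_{k,\min}) \leq 2c_1\sigma_k$.

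Substituting $\sigma_k = 2^{k/3}$ into the RGD bound collapses it to $\|\grad \psi_k(x^{k+1})\| \leq \sqrt{2c_1\sigma_k/2^k} = \sqrt{2c_1}/\sigma_k$, which now plays exactly the role that $\epsilon_k = 1/\sigma_k$ played in the proof of Theorem \ref{the:milam:outer}. From this point the three-residual analysis transcribes verbatim: (a) via the identity $\nabla \psi_k(x^{k+1}) = \nabla f(x^{k+1}) - \mathcal{A}^*\bar{z}^{k+1}$ extracted from \eqref{eq:grad-psik} and the definition of $\bar{z}^{k+1}$, the $x$-stationarity gives $\|\mathcal{P}_{T_{x^{k+1}}\mathcal{M}}(\nabla f(x^{k+1}) - \mathcal{A}^*\bar{z}^{k+1})\| \leq \sqrt{2c_1}/\sigma_k$; (b) the exact proximal step for $y^{k+1}$ yields $-\bar{z}^{k+1} \in \partial h(y^{k+1})$; (c) $\|z^k\| \leq z_{\max}$ together with \eqref{eq:moreau-gradient-bound} yields $\|\mathcal{A}x^{k+1} - y^{k+1}\| \leq (\ell_h + z_{\max})/\sigma_k$. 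The uniform dual bound itself is established exactly as in Theorem \ref{the:milam:outer}, since that argument used only the update rules \eqref{iter:beta} and \eqref{iter:z}.

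Requiring each of the three residuals to be at most $\epsilon$ reduces to the single condition $\sigma_k \geq (\sqrt{2c_1} + \ell_h + z_{\max})/\epsilon$, i.e.\ $2^{k/3} \geq (\sqrt{2c_1} + \ell_h + z_{\max})/\epsilon$, which rearranges to the announced $K$. The main technical obstacle is step (ii) above: justifying a uniform-in-$k$ upper bound on $\psi_k(x^k) - \psi_{k,\min}$ despite the explicit dependence of $\psi_k$ on the unbounded penalty $\sigma_k$ and on the evolving multiplier $z^k$. Once that bound is secured, the remainder of the proof is essentially bookkeeping, with $\epsilon_k$ in Theorem \ref{the:milam:outer} replaced by $\sqrt{2c_1}/\sigma_k$.
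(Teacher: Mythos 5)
Your proposal is correct and follows essentially the same route as the paper's proof: invoke the Option~II bound from Lemma~\ref{theo:rgd}, bound $\psi_k(x^k)-\psi_{k,\min}$ uniformly in $k$ via the Moreau-envelope approximation $|M_h^{1/\sigma_k}-h|\le \ell_h^2/(2\sigma_k)$ together with compactness and the dual bound $\|z^k\|\le z_{\max}$, collapse the residual to $\sqrt{2c_1}/\sigma_k$ (the paper keeps a separate $c_2$ and assumes $c_1 2^{k/3}\ge c_2$, which is the same absorption you perform), and then transcribe the three-residual argument of Theorem~\ref{the:milam:outer}. You also correctly flag that $\sigma_k=2^{k/3}$ is the intended (increasing) choice despite the statement's $\sigma_k=1/2^{k/3}$.
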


\begin{proof}
    Similar to Theorem \ref{the:milam:outer}, for any $k\in \mathbb{N}$, we have that
    \begin{equation}\label{eq:temp-y-z}
        \begin{aligned}
\mathrm{dist}(-\bar{z}^{k+1}, \partial h(y^{k+1}))  = 0, ~~\|\mathcal{A}x^{k+1}  - y^{k+1}\| & \leq \frac{\ell_h + \|z_{\max}\|}{\sigma_k}.
    \end{aligned}
    \end{equation}
    Moreover, it follows from Lemma \ref{theo:rgd} that 
    \begin{equation}\label{eq:temp-x}
        \begin{aligned}
            \left\|\mathcal{P}_{T_{x^{k+1}}\mathcal{M}} \left( 
 \nabla f(x^{k+1}) + \mathcal{A}^*  \bar{z}^{k+1} \right)  \right\| & \leq \frac{\sqrt{L_k(\psi_k(x_k) - \psi_{k,\min})}}{\sqrt{2^k}}.
        \end{aligned} 
    \end{equation}
Next, we show that $\psi_k(x_k) - \psi_{k,\min}$ is bounded for any $k\geq 1$.  By the properties of the Moreau envelope, we have that 
    \begin{equation}
        \psi_k(x) \leq f(x) + h(\mathcal{A}x - z^k/\sigma_k) \leq \psi_k(x) +   \frac{\ell_h^2 + \|z_{\max}\|^2}{2\sigma_0}.
    \end{equation}
    Since $\Mcal$ is a compact submanifold, $z^k$ is bounded by $z_{\max}$ and $\sigma_k>\sigma_0$ has the low bound, there exist $\overline{\phi}$ and $\underline{\phi}$ such that  $\underline{\phi} \leq  f(x) + h(\mathcal{A}x - z^k/\sigma_k) \leq \overline{\phi}$ for any $x\in \Mcal$ and $k\geq 1$. Therefore, we have that
    \begin{equation}
        \psi_k(x^k) - \psi_{k,\min} \leq \overline{\phi} - \underline{\phi} + \frac{\ell_h^2 + \|z_{\max}\|^2}{2\sigma_0}.
    \end{equation}
 Let us denote $c_0 = \overline{\phi} - \underline{\phi} + \frac{\ell_h^2 + \|z_{\max}\|^2}{2\sigma_0}, c_1 = c_0\alpha^2 \|\mathcal{A}\|^2, c_2 = c_0(\alpha^2 \ell_{\nabla f}+2G\beta)$. By the fact that $\sigma_k = 2^{k/3}$, we have
 \begin{equation}\label{eq:temp-c1}
     \begin{aligned}
        & L_k(\psi_k(x_k) - \psi_{k,\min})  \\
        =& (\alpha^2 (\ell_{\nabla f} + \sigma_k \|\mathcal{A}\|^2 )  +2 G \beta)(\psi_k(x_k) - \psi_{k,\min}) \\
          =& c_1 2^{k/3} + c_2.
     \end{aligned}
 \end{equation}
 Combining with \eqref{eq:temp-y-z} and \eqref{eq:temp-x} yields  
     \begin{equation}
      \left\{  \begin{aligned}
\left\|\mathcal{P}_{T_{x^{k+1}}\mathcal{M}} \left( 
 \nabla f(x^{k+1}) + \mathcal{A}^*  \bar{z}^{k+1} \right)  \right\| & \leq 
\frac{\sqrt{c_1 2^{k/3} + c_2}}{ 2^{k/2}}, \\
\|\mathcal{A}x^{k+1}  - y^{k+1}\| &\leq  \frac{\ell_h + \|z_{\max}\|}{ 2^{k/3}},\\
\mathrm{dist}(-\bar{z}^{k+1}, \partial h(y^{k+1})) & = 0.
    \end{aligned}
    \right.
    \end{equation}
 Without loss of generality, we assume that $c_1 2^{k/3} \geq c_2$. This implies that $\frac{\sqrt{c_1 2^{k/3} + c_2}}{ 2^{k/2}} \leq \frac{\sqrt{2c_1}}{ 2^{k/3}}$.       Letting $K = \log_2\left( \frac{(\sqrt{2c_1}+ \ell_h + z_{\max})^3}{\epsilon^3} \right)$, one can shows that 
         \begin{equation}
         \left\{
        \begin{aligned}
\left\|\mathcal{P}_{T_{x^{K+1}}\mathcal{M}} \left( 
 \nabla f(x^{K+1}) + \mathcal{A}^*  \bar{z}^{K+1} \right)  \right\| & \leq \epsilon, \\
 \|\mathcal{A}x^{K+1}  - y^{K+1}\| &\leq  \epsilon,\\
\mathrm{dist}(-\bar{z}^{K+1}, \partial h(y^{K+1})) & = 0.
    \end{aligned}
    \right.
    \end{equation}
    This implies that $(x^{K+1},y^{K+1},\bar{z}^{K+1})$ is an $\epsilon$-KKT point pair. 
\end{proof}

\subsubsection{Overall oracle complexity of \textbf{ManIAL}}
Before giving the overall oracle complexity of \textbf{ManIAL},  we give the definition of a first-order oracle for \eqref{prob}.
\begin{definition}[\textbf{first-order oracle}]\label{def:detern}
    For problem \eqref{prob}, a first-order oracle can
be defined as follows: compute the Euclidean gradient $\nabla f(x)$, the proximal operator $\prox_h(x)$, and the retraction operator $\mathcal{R}$.  
\end{definition}
Now, let us combine the inner oracle complexity and the outer iteration complexity to derive the overall oracle complexity of \textbf{ManIAL}.
\begin{theorem}[Oracle complexity of \textbf{ManIAL}]\label{theo:oracle}
  Suppose that Assumption \ref{assum} holds. We use Algorithm \ref{alg4} as the subroutine to solve the subproblems in \textbf{ManIAL}. The following holds:
 \begin{itemize}
    \item[(a)] If we set $\sigma_k = b^k$ for some $b> 1$ and $\epsilon_k =1 /\sigma_k$, then \textbf{ManIAL} with \textbf{Option I} finds an $\epsilon$-KKT point, after at most $\tilde{ \mathcal{O}}(\epsilon^{-3})$ calls to the first-order oracle.
    \item[(b)] If we set $\sigma_k = \frac{1}{2^{k/3}}$, then \textbf{ManIAL} with \textbf{Option II} finds an $\epsilon$-KKT point, after at most $\mathcal{O}(\epsilon^{-3})$ calls to the first-order oracle.
 \end{itemize}
\end{theorem}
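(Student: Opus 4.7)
The plan is to combine the outer iteration complexity bounds from Theorems \ref{the:milam:outer} and \ref{manial2-iter} with the inner oracle counts provided by Lemma \ref{theo:rgd}, and to sum the resulting geometric series. I will treat the two options separately, since Option I uses an adaptive inner stopping rule while Option II uses a prescribed inner iteration budget.

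For part (a), I would proceed as follows. By Theorem \ref{the:milam:outer} the outer loop terminates after at most $K = \mathcal{O}(\log(1/\epsilon))$ iterations with $\sigma_k = b^k$ and $\epsilon_k = 1/\sigma_k$. At the $k$-th outer iteration, Lemma \ref{theo:rgd} with Option I guarantees that $T_k = L_k(\psi_k(x^k)-\psi_{k,\min})/\epsilon_k^2$ RGD steps suffice. From the formula $L_k = \alpha^2(\ell_{\nabla f} + \sigma_k\|\mathcal{A}\|^2) + 2G\beta$ in Lemma \ref{Euclidean vs manifold}, we have $L_k = \mathcal{O}(\sigma_k)$. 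The quantity $\psi_k(x^k)-\psi_{k,\min}$ admits a $k$-independent bound $c_0$: this is exactly the compactness argument used inside the proof of Theorem \ref{manial2-iter}, combining the Moreau envelope inequality $\psi_k(x) \le f(x)+h(\mathcal{A}x - z^k/\sigma_k) \le \psi_k(x) + (\ell_h^2+\|z_{\max}\|^2)/(2\sigma_0)$, the uniform bound $\|z^k\|\le z_{\max}$ from Theorem \ref{the:milam:outer}, and compactness of $\mathcal{M}$. Hence $T_k = \mathcal{O}(\sigma_k \cdot \sigma_k^2) = \mathcal{O}(b^{3k})$, and summing the geometric series gives a total oracle count of $\sum_{k=0}^{K} T_k = \mathcal{O}(b^{3K}) = \mathcal{O}(\epsilon^{-3})$; the $\tilde{\mathcal{O}}$ absorbs the $\log(1/\epsilon)$ factor that appears when we also count the linear-in-$K$ overhead from the outer iterations and the potential constants hidden in $c_0$.

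For part (b), the setting is cleaner. By Theorem \ref{manial2-iter} the outer complexity is $K = \log_2((\sqrt{2c_1}+\ell_h+z_{\max})^3/\epsilon^3) = \mathcal{O}(\log(1/\epsilon))$, which, critically, satisfies $2^K = \mathcal{O}(\epsilon^{-3})$. Since Option II calls RGD with exactly $2^k$ iterations at the $k$-th outer step, the total number of first-order oracle calls is $\sum_{k=0}^{K} 2^k \le 2^{K+1} = \mathcal{O}(\epsilon^{-3})$, yielding the claimed clean $\mathcal{O}(\epsilon^{-3})$ bound without logarithmic factors.

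The main obstacle is not the geometric series bookkeeping but ensuring that the ``per-iteration potential drop'' $\psi_k(x^k)-\psi_{k,\min}$ in Lemma \ref{theo:rgd} does not grow with $k$. This needs the uniform boundedness of the multiplier sequence $\{z^k\}$ (guaranteed by the dual stepsize rule \eqref{iter:beta}) together with the compactness of $\mathcal{M}$, both of which are already in place from the outer analysis. Once this observation is recorded and $L_k = \mathcal{O}(\sigma_k)$ is substituted, the two complexity bounds follow by direct summation and there are no further technical hurdles.
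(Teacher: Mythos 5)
Your proposal is correct and follows essentially the same route as the paper: combine the outer iteration counts from Theorems \ref{the:milam:outer} and \ref{manial2-iter} with the per-subproblem RGD counts from Lemma \ref{theo:rgd}, use the uniform bound $\psi_k(x^k)-\psi_{k,\min}\leq c_0$ (which the paper establishes via the same Moreau-envelope and multiplier-boundedness argument inside the proof of Theorem \ref{manial2-iter}) together with $L_k=\mathcal{O}(\sigma_k)$, and sum. The only cosmetic difference is that in part (a) you sum the geometric series $\sum_k b^{3k}$ directly, whereas the paper bounds each term by $c_1/\epsilon^3+c_2/\epsilon^2$ and multiplies by $K$; both land on $\tilde{\mathcal{O}}(\epsilon^{-3})$.
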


\begin{proof}
Let $K$ denote the number of (outer) iterations of Algorithm \ref{alg-manial} to reach the accuracy $\epsilon$. Let us show (a) first.
It follows from Theorem \ref{the:milam:outer} that $K = \log_{b}\left(\frac{\ell_h + \|z_{\max}\| + 1}{ \epsilon}\right)$. By Lemma \ref{theo:rgd}, in the $k$-th iterate, one needs at most $\frac{L_k(\psi_k(x^k) - \psi_{k,\min} )}{\epsilon_k^2}$ iterations of Algorithm \ref{alg4}.   Therefore, one can bound $T$:
    \be 
\begin{aligned}
    T = & \sum_{k = 1}^K \frac{L_k(\psi_k(x^k) - \psi_{k,\min} )}{\epsilon_k^2} \\
    \leq & \sum_{k = 1}^K \frac{c_1 \sigma_k + c_2}{\epsilon_k^2}  \leq  \sum_{k = 1}^K \left( \frac{c_1}{\epsilon_k^3} + \frac{c_2}{\epsilon_k^2}\right) \\
    \leq & K \left( \frac{c_1}{\epsilon^3} + \frac{c_2}{\epsilon^2}\right)\\ 
    \leq & \log_{b}\left(\frac{\ell_h + \|z_{\max}\| + 1}{ \epsilon}\right)\left( \frac{c_1}{\epsilon^3} + \frac{c_2}{\epsilon^2}\right),
\end{aligned}
    \ee
where the first inequality follows from \eqref{eq:temp-c1}. 
   Secondly, it follows from Theorem \ref{manial2-iter} that $K = \log_2\left( \frac{(\sqrt{2c_1}+ \ell_h + z_{\max})^3}{\epsilon^3} \right)$. Then we have
  \begin{equation}
    \begin{aligned}
    T = &   \sum_{k = 1}^K 2^k = \sum_{k = 1}^K (2^{k+1} - 2^k) \\
    = & 2^{K+1} - 2 =2 \left( \frac{\left(\sqrt{2c_1}+ \ell_h + z_{\max}\right)^3}{\epsilon^3} -1   \right). 
\end{aligned}
  \end{equation}
    The proof is completed.
\end{proof}

     It should be emphasized that \textbf{ManIAL} with \textbf{Option I}, as well as the existing works, such as \cite{lin2019inexact} and \cite{lin2019inexact}, achieves an oracle complexity of $\mathcal{O}(\epsilon^{-3})$ up to a logarithmic factor. However, \textbf{ManIAL} with \textbf{Option II} achieves an oracle complexity of $\mathcal{O}(\epsilon^{-3})$ independently of the logarithmic factor. In that sense, we demonstrate that \textbf{ManIAL} with \textbf{Option II} achieves a better oracle complexity result.

\section{Stochastic augmented Lagrangian method}
In this section, we focus on the case that $f$ in problem \eqref{prob} has the expectation form, i.e., $f(x) = \mathbb{E}_{\xi\in \mathcal{D}}[f(x,\xi)]$. In particular, we consider the following optimization problem on manifolds:
\begin{equation}\label{prob:stochastic}
    \min_{x\in\mcM} F(x):= \mbE_{\xi\in\mcD}[f(x,\xi)] + h(\mcA x).
\end{equation}
Here, we assume that the function $h$ is convex and $\ell_h$-Lipschitz continuous, $\nabla f(x,\xi)$ is $\ell_{\nabla f}$-Lipschitz continuous.  The following assumptions are made for the stochastic gradient $\nabla f(x,\xi)$:
\begin{assumption}\label{assum:stochatis}
For any $x$, the algorithm generates a sample $\xi\sim \mathcal{D}$ and returns a stochastic gradient $\nabla f(x,\xi)$, there exists a parameter $\delta>0$ such that
\begin{align}
  &  \mathbb{E}_{\xi}\left[ \nabla f(x,\xi) \right] = \nabla f(x), \label{assm:mean} \\ 
 &    \mathbb{E}_{\xi}\left[ \|\nabla f(x,\xi) - \nabla f(x) \|^2\right] \leq \delta^2. \label{assm:vari}
\end{align}
Moreover, $\nabla f(x,\xi)$ is $\ell_{\nabla f}$-Lipschitz continuous.
\end{assumption}

Similarly, we give the definition of $\epsilon$-stationary point for problem \eqref{prob:stochastic}:
\begin{definition}
We say $x\in\mathcal{M}$ is an $\epsilon$-stationary point of \eqref{prob:stochastic} if there exists $y,z\in\mathbb{R}^m$ such that 
\begin{equation}\label{eq:epsi-kkt-2}
    \left\{
\begin{aligned}
    \mathbb{E}_{\xi}\left[\left\|\mathcal{P}_{T_x\mathcal{M}}\left(\nabla f(x,\xi) - \mathcal{A}^*z \right) \right\| \right]\leq \epsilon, \\
   \mathrm{dist}( -z, \partial h(y)) \leq 
\epsilon, \\
   \| \mathcal{A}x - y\| \leq \epsilon.
\end{aligned}
    \right.
\end{equation}
In other words, $(x,y,z)$ is an $\epsilon$-KKT point pair of \eqref{prob:stochastic}. 
\end{definition}

\subsection{Algorithm}
We define the augmented Lagrangian function of \eqref{prob:stochastic}:
\begin{equation}
\begin{aligned}
     \mathcal{L}_{\sigma}(x,z): &= \min_{y\in \mathbb{R}^m} \left\{ \mathbb{E}_{\xi} [f(x,\xi) ] + h(y) - \langle z, Ax-y \rangle + \frac{\sigma}{2}\|Ax - y\|^2\right\} \\
     & = \mathbb{E}_{\xi} [f(x,\xi) ] + M_{h}^{1/\sigma}(Ax - z/\sigma) - \frac{1}{2\sigma}\|z\|^2.
\end{aligned}
\end{equation}
To efficiently find an $\epsilon$-stationary point of \eqref{prob:stochastic}, we design a Riemannian stochastic gradient-type method based on the framework of the stochastic manifold inexact augmented Lagrangian method (\textbf{StoManIAL}), which is given in
Algorithm \ref{alg:stomanial}. 
To simplify the notation, in the $k$-iterate, let us denote
\begin{equation}
    \psi_k(x,\xi): = f(x,\xi) + M_{h}^{1/\sigma_k}(Ax - z^k/\sigma_k) - \frac{1}{2\sigma_k}\|z^k\|^2, 
\end{equation}
and $\psi_k(x): = \mathbb{E}_{
\xi
}[\psi_k(x,\xi)]$.  Under Assumption \ref{assum:stochatis}, it can be easily shown that
\begin{equation}\label{def:psik-delta}
    \mathbb{E}_{\xi}\|\psi_k(x,\xi) - \psi_k(x)\|^2 \leq \delta^2. 
\end{equation}
Moreover, as shown in \eqref{eq:grad-psik}, one can derive the Euclidean gradient of $\psi_k(x,\xi)$ as follows
\begin{equation}\label{eq:grad-psik-1}
\begin{aligned}
  \nabla \psi_k(x,\xi) & = \nabla f(x,\xi) + \sigma_k \mathcal{A}^*\left(\mathcal{A}x-\frac{1}{\sigma_k} z^k - \mbox{prox}_{ h/\sigma_k}(\mathcal{A}(x)-\frac{1}{\sigma_k} z^k)\right).
  \end{aligned}
\end{equation}

Note that, unlike \textbf{ManIAL}, the condition \eqref{eq:solve-x-1} of \textbf{Option I} in \textbf{StoManIAL} cannot be checked in the numerical experiment due to the expectation operation, although its validity can be assured by the convergence rule result of the subroutine, which will be presented in the next subsection. In contrast, the condition in \textbf{Option II} is checkable. 

\begin{algorithm}[H]
\footnotesize
\caption{ (\textbf{StoManIAL})}\label{alg:stomanial}
\begin{algorithmic}[1]
\REQUIRE Initial point $x^0\in \mathcal{M}, y^0, z^0$;  $\epsilon$, $\{\sigma_k\}_k$. Set $k=0$. 
\WHILE {not converge}
\STATE Solve the following manifold optimization problem
   \be \label{prob:sto-x-sub}
 \min_{x\in \Mcal} \mathcal{L}_{\sigma_k}(x,z^k)
   \ee
   and return $x^{k+1}$ with two options:
   \begin{itemize}
       \item \textbf{Option I}: $x^{k+1}$ satisfies 
        \begin{equation}\label{eq:solve-x-1}
  \mathbb{E}_{\xi}\left[ \left \|\grad \psi_k(x^{k+1},\xi)\right\| \right] \leq \epsilon_k.
\end{equation}
\item \textbf{Option II}: return $x^{k+1}$ with the fixed iteration number $2^k$. 
   \end{itemize}
\STATE Update the auxiliary variable: $$ y^{k+1} = \mathrm{prox}_{h/\sigma_k}(\mathcal{A}x^{k+1} - z^k/\sigma_k).$$
\STATE Update the step sizes:
   $$
       \beta_{k+1} = \beta_1\min\left(\frac{\|\mathcal{A}x^1 - y^1\|\log^22}{\|\mathcal{A}x^{k+1}-y^{k+1}\|(k+1)\log^2(k+2)},1\right).
  $$
\STATE Update the dual variable: \begin{equation}
       z^{k+1} = z^k -  \beta_{k+1}(\mathcal{A}x^{k+1} - y^{k+1}).
   \end{equation}
\STATE Set $k=k+1$.
\ENDWHILE
\end{algorithmic}
\end{algorithm}

\subsection{Subproblem solver}

In this subsection, we present a Riemannian stochastic optimization method to solve the $x$-subproblem in Algorithm \ref{alg:stomanial}. As will be shown in Lemma \ref{lem:deter:L},  the function $\mathcal{L}_{\sigma_k}(x,z^k)$ is retr-smooth with respect to $x$, and its retr-smoothness parameter depends on $\sigma_k$, which, in turn, is ultimately determined by a prescribed tolerance $\epsilon_k$. To achieve a favorable overall complexity outcome with a lower order, it is essential to employ a subroutine whose complexity result exhibits a low-order dependence on the smoothness parameter $\sigma_k$.

A recent work, the momentum-based variance-reduced stochastic gradient method known as Storm \cite{cutkosky2019momentum}, has demonstrated efficacy in solving nonconvex smooth stochastic optimization problems in Euclidean space, yielding a low-order overall complexity result. Despite the existence of a Riemannian version \cite{han2020riemannian} of Storm (RSRM) that matches the same overall complexity result, its dependence on the smoothness parameters is excessively significant for this outcome. Therefore, we introduce a modified Riemannian version of Storm. Subsequently, in the next subsection, we elaborate on its application in determining $x^{k+1}$ within Algorithm \ref{alg:stomanial}.

Consider the following general stochastic optimization problem on a Riemannian manifold
\begin{equation}
    \min_{x\in \mathcal{M}} \mathbb{E}[\psi(x,\xi)].
\end{equation}
Suppose the following conditions hold: for some finite constants $L, \delta$
\begin{align}
  &  \mathbb{E}_{\xi}\left[ \nabla \psi(x,\xi) \right] = \nabla \psi(x), ~~     \mathbb{E}_{\xi}\left[ \|\nabla \psi(x,\xi) - \nabla \psi(x) \|^2\right] \leq \delta^2,\label{assum-rsrm-1} \\
 & \|\grad \psi(x,\xi) - \mathcal{T}_{y}^x \grad \psi(y,\xi) \| \leq L \| d\|,~~ \forall x, y\in \mathcal{M}, d = \mathcal{R}_x^{-1}(y),\label{lipschi-rie} \\
&  \psi(y,\xi)  \leq  \psi(x,\xi) + \langle d, \grad \psi(x,\xi) \rangle + \frac{L}{2}\|d\|^2, ~~ \forall x, y\in \mathcal{M}, d = \mathcal{R}_x^{-1}(y). \label{assum-rsrm-3}
\end{align}

With the above conditions, we give the modified Riemannian version of Storm in Algorithm \ref{alg:rsrm} and the
complexity result in Lemma \ref{lem:rsrm}. The proof is similar to Theorem 1 in \cite{cutkosky2019momentum}. We provide it in the Appendix.

\begin{algorithm}[H]
\caption{ Riemannian stochastic recursive momentum gradient descent method: $\text{RStorm}(\psi,L,T,x_1)$}\label{alg:rsrm}
\begin{algorithmic}[1]
\REQUIRE Initial point $x_{1}$, iteration limit $T$, parameters $\kappa,w,c$, sample set $\{\xi_t\}_{t\geq 1}$. \\
\STATE Compute $d_1 = -\grad  \psi(x_1,\xi_1)$
\FOR{$t=1, \ldots, T$}
\STATE Update $\eta_t = \frac{\kappa}{(w+\sum_{i=1}^t G_t^2)^{1/3}}$
 \STATE  Update $x_{t+1}$ via $
x_{t+1} = \mathcal{R}_{x_t}( -\eta_t d_t).$
\STATE Update $a_{t+1} = c\eta_t^2$ and $G_{t+1} = \|\grad \psi(x_{t+1},\xi_{t+1})\|$.
\STATE Compute $d_{t+1} = \grad  \psi(x_{t+1};\xi_{t+1}) + (1-a_{t+1}) \mathcal{T}_{x_t}^{x_{t+1}}(d_t - \grad  \psi(x_t;\xi_t)) $. 
\ENDFOR
\STATE \textbf{Output:} Return $\hat{x}$ uniformly at random from $x_1,\cdots,x_T$. 
\end{algorithmic}
\end{algorithm}

\begin{lemma}\label{lem:rsrm}
    Suppose that the conditions \eqref{assum-rsrm-1}-\eqref{assum-rsrm-3} hold. Let us denote $G = \max_{x\in \Mcal} \nabla \psi(x,\xi)$. For any $b>0$, we write $\kappa=\frac{b G^{\frac{2}{3}}}{L}$. Set $c=10 L^2+$ $G^2 /\left(7 L \kappa^3\right)$ and $w=\max \left((4 L \kappa)^3, 2 G^2,\left(\frac{c \kappa}{4 L}\right)^3\right)$.
Then, the output $\hat{x}$ in Algorithm \ref{alg:rsrm} satisfies
\begin{equation}\label{eq:grad-rsrm-bound}
\mathbb{E}[\|\grad  \psi(\hat{x})\|]=\mathbb{E}\left[\frac{1}{T} \sum_{t=1}^T\left\|\grad \psi\left(x_t\right)\right\|\right] \leq \frac{w^{1 / 6} \sqrt{2 M}+2 M^{3 / 4}}{\sqrt{T}}+\frac{2 \delta^{1 / 3}}{T^{1 / 3}},
\end{equation}
where $M=\frac{6}{\kappa}\left(\psi\left(x_1\right)-\psi_{\min}\right)+\frac{w^{1 / 3} \delta^2}{2 L^2 \kappa^2}+\frac{\kappa^2 c^2}{ L^2} \ln (T+2)$.
\end{lemma}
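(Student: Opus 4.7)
The plan is to mimic the Euclidean analysis of Storm in \cite{cutkosky2019momentum}, adapted to the Riemannian setting using the three structural assumptions \eqref{assum-rsrm-1}--\eqref{assum-rsrm-3}. Throughout, write $e_t = d_t - \grad \psi(x_t)$ for the momentum error and $g_t = \grad \psi(x_t,\xi_t)$ for the stochastic Riemannian gradient. The adaptive stepsize $\eta_t$ depends on $\sum_{i\leq t} G_i^2$, which makes the sums partly data-dependent; we will carry $\eta_t$ symbolically and only resolve it at the end through an algebraic tail bound.

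\emph{Step 1: Descent inequality.} From \eqref{assum-rsrm-3} with $y = x_{t+1} = \mathcal{R}_{x_t}(-\eta_t d_t)$ and a standard Young-type split of $\langle d_t, \grad \psi(x_t)\rangle$, I would obtain
\begin{equation*}
\psi(x_{t+1}) \leq \psi(x_t) - \tfrac{\eta_t}{2}\|\grad \psi(x_t)\|^2 - \tfrac{\eta_t}{2}\|d_t\|^2 + \tfrac{\eta_t}{2}\|e_t\|^2 + \tfrac{L\eta_t^2}{2}\|d_t\|^2,
\end{equation*}
so that $\eta_t \leq 1/(2L)$ (which will follow from the choice of $w$) leaves a net $-\tfrac{\eta_t}{4}\|d_t\|^2$ term and a price of $\tfrac{\eta_t}{2}\|e_t\|^2$ that must be paid by the error analysis.

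\emph{Step 2: Momentum error recursion.} Using the update rule for $d_{t+1}$, one can write
\begin{equation*}
e_{t+1} = (1-a_{t+1})\mathcal{T}_{x_t}^{x_{t+1}} e_t + \bigl(\grad \psi(x_{t+1},\xi_{t+1})-\grad \psi(x_{t+1})\bigr) - (1-a_{t+1})\bigl(\mathcal{T}_{x_t}^{x_{t+1}}(g_t - \grad \psi(x_t)) - \bigl(\mathcal{T}_{x_t}^{x_{t+1}}\grad \psi(x_t) - \grad \psi(x_{t+1})\bigr)\bigr).
\end{equation*}
Conditioning on the $\sigma$-algebra up to step $t$ makes the first bracketed noise term mean-zero and variance-bounded by $\delta^2$ via \eqref{assum-rsrm-1}. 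The second bracketed correction is controlled by the Lipschitz condition \eqref{lipschi-rie}, which bounds $\|\mathcal{T}_{x_t}^{x_{t+1}}\grad \psi(x_t) - \grad \psi(x_{t+1})\| \leq L\eta_t\|d_t\|$; since the stochastic gradient difference is assumed $L$-Lipschitz in the same sense, I get
\begin{equation*}
\mathbb{E}\|e_{t+1}\|^2 \leq (1-a_{t+1})^2 \mathbb{E}\|e_t\|^2 + 2 a_{t+1}^2 \delta^2 + 2(1-a_{t+1})^2 L^2 \eta_t^2 \mathbb{E}\|d_t\|^2.
\end{equation*}
This is the analogue of Lemma~2 in \cite{cutkosky2019momentum}; the Riemannian isometry of $\mathcal{T}_{x_t}^{x_{t+1}}$ (implicit in \eqref{lipschi-rie}) lets me treat the transport as norm-preserving.

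\emph{Step 3: Potential function and telescoping.} Define the potential
\begin{equation*}
\Phi_t = \tfrac{1}{\eta_{t-1}}\bigl(\psi(x_t) - \psi_{\min}\bigr) + \tfrac{\rho}{a_t}\|e_t\|^2,
\end{equation*}
for a suitable constant $\rho$ tied to $c$, and show $\mathbb{E}[\Phi_{t+1} - \Phi_t] \leq -\tfrac{1}{8}\|\grad \psi(x_t)\|^2 + \text{error terms involving } a_t\delta^2, L^2\eta_t\|d_t\|^2$. The choice $a_{t+1} = c\eta_t^2$ with $c = 10L^2 + G^2/(7L\kappa^3)$ is designed precisely so that the coefficient of $\|d_t\|^2$ in the error recursion cancels against the $-\|d_t\|^2$ left over from Step~1. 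Telescoping and taking expectation gives
\begin{equation*}
\mathbb{E}\sum_{t=1}^T \|\grad \psi(x_t)\|^2 \leq \mathcal{O}\!\left(\tfrac{1}{\kappa}(\psi(x_1)-\psi_{\min}) + \tfrac{\delta^2}{L^2\kappa^2}\sum_{t=1}^T a_t + L^2\kappa^2\sum_{t=1}^T \eta_t^2\right),
\end{equation*}
after absorbing the $a_1$-initialization of $e_1$ by a constant.

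\emph{Step 4: Resolving the adaptive stepsize.} This is the step I expect to be the main obstacle, because $\eta_t$ and $a_t$ depend on the data-dependent quantity $S_t = w+\sum_{i\leq t}G_i^2$. Following Lemma~4 of \cite{cutkosky2019momentum} (adapted to the Riemannian gradient norms, which is valid since only the scalar sums $\sum \|G_t\|^2$ enter), I would use $\sum_t \eta_t^2 \leq \kappa^2 \sum_t S_t^{-2/3} = \mathcal{O}(\kappa^2 \log T)$ and $\sum_t \eta_t \geq \kappa S_T^{-1/3}T$, then translate $\mathbb{E}\sum \|\grad \psi(x_t)\|^2/T$ into $\mathbb{E}\|\grad \psi(\hat{x})\|$ by Jensen. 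The relation $\|\grad \psi(x_t)\|^2 \leq 2\|G_t\|^2 + 2\delta^2$ allows me to bound $\mathbb{E}[S_T^{1/3}] \leq w^{1/6}\sqrt{\cdots} + \mathcal{O}(\delta^{2/3}T^{1/3})$ by a Cauchy--Schwarz/Jensen argument, producing the two summands in \eqref{eq:grad-rsrm-bound}. The choices of $w \geq (4L\kappa)^3$ and $w \geq (c\kappa/(4L))^3$ guarantee $\eta_1 \leq \min(1/(2L),1/\sqrt{c})$, which is what Step~1 and the error recursion needed. Substituting everything and grouping into the stated constant $M$ yields the bound.
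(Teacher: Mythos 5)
Your roadmap is the same as the paper's (a Storm-style analysis: descent lemma, momentum-error recursion, a potential combining $\psi(x_t)$ with a scaled $\|e_t\|^2$, then resolution of the adaptive stepsize and a self-bounding inequality in $\sqrt{\sum_t\|\grad\psi(x_t)\|^2}$). Steps 1 and the final self-bounding/Cauchy--Schwarz argument in Step 4 match what the paper does. However, there is a genuine gap in how you handle the accumulated noise under the adaptive stepsize.

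The problem is the combination of two choices. In Step 2 you bound the fresh-noise term by $2a_{t+1}^2\delta^2$, and in Step 3 you normalize the error part of the potential by $1/a_t$, so the accumulated noise becomes $\sum_t a_{t+1}\delta^2 = c\,\delta^2\sum_t\eta_t^2$; in Step 4 you then claim $\sum_t\eta_t^2\leq\kappa^2\sum_t S_t^{-2/3}=\mathcal{O}(\kappa^2\log T)$ with $S_t=w+\sum_{i\le t}G_i^2$. That claim is false: since the $G_i$ are bounded, $S_t=\mathcal{O}(w+tG^2)$, so $\sum_{t\le T}S_t^{-2/3}=\Omega(T^{1/3})$ (and $\Theta(T)$ if the realized gradients are small), not $\mathcal{O}(\log T)$. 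With a polynomially growing noise term the telescoped bound no longer yields the stated $M$ with only a $\ln(T+2)$ factor, and the final rate degrades or becomes vacuous. The logarithm in $M$ arises only because the paper keeps the \emph{realized} squared gradient $G_{t+1}^2$ in the noise bound (via $\mathbb{E}\|X-\mathbb{E}X\|^2\le\mathbb{E}\|X\|^2$, so the $2a_{t+1}^2\|\grad\psi(x_{t+1},\xi_{t+1})-\grad\psi(x_{t+1})\|^2$ term is bounded by $2a_{t+1}^2G_{t+1}^2$) and normalizes the error term by $1/\eta_{t-1}$ rather than $1/a_t$; the accumulated noise is then $\sum_t 2a_{t+1}^2G_{t+1}^2/\eta_t=\sum_t 2\kappa^3c^2G_{t+1}^2/(w+\sum_{i\le t}G_i^2)$, whose numerator matches the increments of the denominator, so the self-normalizing-sum lemma (Lemma 4 of Cutkosky--Orabona) gives the $\ln(T+2)$ bound. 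You need to restore that matching. A secondary caveat: your appeal to "Riemannian isometry" of $\mathcal{T}_{x_t}^{x_{t+1}}$ is not justified by the paper's definition of the transport (a differentiated retraction is not an isometry in general), though the paper's own Lemma on the variance recursion makes the same non-expansiveness assumption implicitly, so this is a shared, not a new, weakness.
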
 

To apply Algorithm \ref{alg:rsrm} to find $x^{k+1}$, we need to validate conditions \eqref{assum-rsrm-1}-\eqref{assum-rsrm-3} for the associated subproblem. 
\begin{lemma}\label{lem:deter:L}
   Suppose that Assumption \ref{assum} holds.  Then, for $\psi_k$, there exists finite $G$ such that $\|\nabla \psi_k(x,\xi)\|\leq G$ for all $x\in\mathcal{M}$. Moreover, the function $\psi_k(x,\xi)$ is retr-smooth with constant $\hat{L}_k$ and the Riemannian gradient $\grad \psi_k(x,\xi)$ is Lipschitz continuous with constant $\bar{L}_k$, where
    \begin{equation} \label{eq:sto-retr-smooth}
    \hat{L}_k: = \alpha^2 \ell_{\nabla f} + \alpha^2 \|\mathcal{A}\|\sigma_k + 2G\beta,\; \bar{L}_k: = (\alpha L_p + \zeta) G + \alpha  \ell_{\nabla f} + \|\mathcal{A}\|\sigma_k,  
    \end{equation}
    and $L_p$ and $\zeta$ are defined in Lemma \ref{lem:eucli-rieman-lipsch}. 
    Moreover, under Assumption \ref{assum:stochatis}, conditions \eqref{assum-rsrm-1}-\eqref{assum-rsrm-3} with constants $L_k:= \max\{\hat{L}_k,\bar{L}_k\}$ and $\delta$ hold for subproblem \eqref{prob:sto-x-sub}.  
\end{lemma}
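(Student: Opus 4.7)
The plan is to follow the template of Lemma \ref{Euclidean vs manifold} but apply it at the sample level, exploiting the fact that the nonsmooth term in $\psi_k(x,\xi)$ is deterministic. I would start by using the explicit formula \eqref{eq:grad-psik-1} to split $\nabla \psi_k(x,\xi)$ into $\nabla f(x,\xi)$ plus a $\xi$-independent Moreau-envelope contribution. Proposition \ref{propos-1} bounds the latter by $\|\mathcal{A}\|\ell_h$, and the $\ell_{\nabla f}$-Lipschitz hypothesis together with compactness of $\mathcal{M}$ gives a uniform bound $G$ on $\|\nabla \psi_k(x,\xi)\|$ (and hence on $\|\grad \psi_k(x,\xi)\|$) that is independent of $\xi$ and of the iterate.

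Next, I would establish Euclidean Lipschitz continuity of $\nabla \psi_k(\cdot,\xi)$: by Proposition \ref{propos-1}, $\nabla M_h^{1/\sigma_k}$ is Lipschitz with constant $\sigma_k$, and composition with $\mathcal{A}$ produces the $\|\mathcal{A}\|\sigma_k$ contribution, while $\nabla f(\cdot,\xi)$ contributes $\ell_{\nabla f}$. Combined with the retraction estimates \eqref{eq:retrac-lipscitz}, the standard argument (essentially Lemma 2.7 of \cite{grocf}, already invoked in Lemma \ref{Euclidean vs manifold}) yields retr-smoothness of $\psi_k(\cdot,\xi)$ with constant $\hat L_k = \alpha^2 \ell_{\nabla f} + \alpha^2 \|\mathcal{A}\|\sigma_k + 2G\beta$. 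For the Riemannian-gradient Lipschitz statement I would invoke Lemma \ref{lem:eucli-rieman-lipsch} directly, taking $L = \ell_{\nabla f} + \|\mathcal{A}\|\sigma_k$ as the Euclidean Lipschitz constant of $\nabla \psi_k(\cdot,\xi)$; this gives $\bar L_k = (\alpha L_p + \zeta) G + \alpha \ell_{\nabla f} + \|\mathcal{A}\|\sigma_k$.

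Finally, I would verify conditions \eqref{assum-rsrm-1}--\eqref{assum-rsrm-3} for the subroutine with $L_k := \max\{\hat L_k,\bar L_k\}$. The unbiasedness in \eqref{assum-rsrm-1} is immediate from \eqref{assm:mean} together with $\xi$-independence of the Moreau envelope term, which also reduces the variance bound to \eqref{assm:vari}, giving the same $\delta$. The Riemannian-Lipschitz condition \eqref{lipschi-rie} follows from the $\bar L_k$ bound just proved (with the vector transport defined as in Lemma \ref{lem:eucli-rieman-lipsch}), and \eqref{assum-rsrm-3} is exactly the retr-smoothness with constant $\hat L_k$. Replacing $\hat L_k$ and $\bar L_k$ by $L_k$ in each place preserves the inequalities.

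The main obstacle I anticipate is bookkeeping rather than conceptual: I must ensure every estimate is sample-wise (uniform in $\xi$) so that the Storm-type subroutine has deterministic Lipschitz parameters, and the $\sigma_k$-dependence of $\hat L_k$ and $\bar L_k$ is only linear (not quadratic in $\sigma_k$) since this linear scaling is what drives the improved overall complexity in Theorem \ref{theo:sto:oracle}. Care is therefore needed in handling the factor of $\|\mathcal{A}\|$ versus $\|\mathcal{A}\|^2$ coming from the Moreau term; a more refined manipulation of $\mathcal{A}^*(\cdot)$ via the dual norm, rather than a direct Cauchy--Schwarz bound, is the point where the proof departs slightly from Lemma \ref{Euclidean vs manifold}.
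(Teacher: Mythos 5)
Your proposal follows essentially the same route as the paper's (very short) proof: Proposition \ref{propos-1} gives the gradient bound and the Euclidean Lipschitz constant of $\nabla\psi_k(\cdot,\xi)$, Lemma \ref{Euclidean vs manifold} (via the retraction estimates \eqref{eq:retrac-lipscitz}) gives retr-smoothness, Lemma \ref{lem:eucli-rieman-lipsch} gives the Riemannian-gradient Lipschitz bound, and Assumption \ref{assum:stochatis} together with the $\xi$-independence of the Moreau term gives \eqref{assum-rsrm-1}. That part is fine and matches the paper.

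The one place you go beyond the paper is the final paragraph, and that step would fail. For a composition $x\mapsto M_h^{1/\sigma_k}(\mathcal{A}x-c)$ with $\nabla M_h^{1/\sigma_k}$ being $\sigma_k$-Lipschitz, the chain rule gives the gradient $\mathcal{A}^*\nabla M_h^{1/\sigma_k}(\mathcal{A}x-c)$, whose Lipschitz constant is $\|\mathcal{A}\|^2\sigma_k$ (one factor of $\|\mathcal{A}\|$ from $\mathcal{A}^*$, one from the inner argument); this is tight for generic $\mathcal{A}$, and no dual-norm manipulation removes the square. So you cannot legitimately produce the constant $\|\mathcal{A}\|\sigma_k$ appearing in \eqref{eq:sto-retr-smooth}. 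Note that the paper is internally inconsistent on exactly this point: its own proof of this lemma asserts the Lipschitz constant $\ell_{\nabla f}+\sigma_k\|\mathcal{A}\|^2$ (consistent with Lemma \ref{Euclidean vs manifold}, where $L_k$ carries $\|\mathcal{A}\|^2$), while the displayed constants $\hat L_k,\bar L_k$ write $\|\mathcal{A}\|\sigma_k$. You should simply carry $\|\mathcal{A}\|^2$ through. This is harmless for everything downstream: what Theorem \ref{theo:sto:oracle} needs is that $L_k$ grows \emph{linearly in $\sigma_k$}, and the coefficient of that linear term ($\|\mathcal{A}\|$ versus $\|\mathcal{A}\|^2$) is an $\epsilon$-independent constant, so the $\tilde{\mathcal{O}}(\epsilon^{-3.5})$ complexity is unaffected. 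Your worry in the last paragraph conflates the exponent on $\|\mathcal{A}\|$ with the exponent on $\sigma_k$; only the latter matters.
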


\begin{proof}
   We first show \eqref{eq:sto-retr-smooth}. By Proposition \ref{propos-1}, one shows that $\nabla \psi_k$  is Lipschitz continuous with constant $ \ell_{\nabla f} + \sigma_k \|\mathcal{A}\|^2 $. The retraction smoothness of $\psi_k(x,\xi)$ can directly follow from Lemma \ref{Euclidean vs manifold}. The Lipschitz continuity of $\grad \psi_k(x,\xi)$ follows from  Lemma \ref{lem:eucli-rieman-lipsch}. These induce conditions \eqref{lipschi-rie} and \eqref{assum-rsrm-3}. Under Assumption \ref{assum:stochatis}, condition \eqref{assum-rsrm-1} is directly follows from the definition of $\psi_k(x)$ and \eqref{def:psik-delta}. We complete the proof.
\end{proof}

As shown in Lemma \ref{lem:deter:L}, the Lipschitz constant $L_k$ increases as the penalty parameter $\sigma_k$ increases. Therefore, we analyze the dependence of $L$ for the right-hand side in \eqref{eq:grad-rsrm-bound} when $L$ tends to infinity. Noting that $\kappa = \mathcal{O}(1/L), c = \mathcal{O}(L^2), w = \mathcal{O}(1)$, and $M = \mathcal{O}(L)$, we can simplify the expression of \eqref{eq:grad-rsrm-bound} via $\mathcal{O}$ notation:
\begin{equation}
    \mathbb{E}[\|\grad  \psi(\hat{x})\|] \leq \tilde{\mathcal{O}}\left(\frac{L^{1/2} + L^{3/4}}{T^{1/2}}  + \frac{1}{T^{1/3}}   \right),
\end{equation}
where $\tilde{\mathcal{O}}$ is due to the dependence of $\ln(T)$.

With Lemma \ref{lem:deter:L}, we are able to apply Algorithm \ref{alg:rsrm} to find $x^{k+1}$. The lemma below gives the oracle complexity for the $k$-th outer iteration of Algorithm \ref{alg:stomanial}. This is a direct application of Lemmas \ref{lem:rsrm} and \ref{lem:deter:L}, and we omit the proof.

\begin{lemma}\label{theo:rsrm-1}
   Suppose that Assumptions \ref{assum} and \ref{assum:stochatis} hold. Let $(x^k,y^k,z^k)$ be the $k$-iterate generated in Algorithm \ref{alg:stomanial}. We can find $x^{k+1}$ with \textbf{Option I} by the following call
    \begin{equation}
      x^{k+1} = \text{RStorm}(\psi_k, L_k, T_k,x^k),
    \end{equation}
    where 
    $
    L_k =\max\{\hat{L}_k,\bar{L}_k\},~~T_k:= \tilde{\mathcal{O}}\left(\frac{1}{\epsilon_k^{3}} + \frac{\sigma_k^{1.5} + \sigma_k}{\epsilon_k^2} \right). 
    $
    Moreover, we can also find $x^{k+1}$ with \textbf{Option II} by the following call
     \begin{equation}
      x^{k+1} = \text{RStorm}(\psi_k, L_k, 2^k, x^k),
    \end{equation}
This implies that
\begin{equation}
 \mathbb{E}_{\xi} \left[\| \grad \psi_k (x^{k+1},\xi) \| \right]\leq  \tilde{\mathcal{O}}\left( \frac{\sigma_k^{3/4} + \sigma_k^{1/2}}{2^{k/2}} + \frac{1}{2^{k/3}} \right).
\end{equation}
\end{lemma}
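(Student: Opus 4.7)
My plan is to treat the statement as a direct composition of Lemma \ref{lem:deter:L} (which verifies the structural hypotheses needed by the subroutine) and Lemma \ref{lem:rsrm} (which gives the convergence rate of RStorm), followed by a routine calculation of how the constants scale with $\sigma_k$.

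First, I would invoke Lemma \ref{lem:deter:L} to guarantee that for each fixed outer index $k$ the function $\psi_k(x,\xi)$ satisfies the three hypotheses \eqref{assum-rsrm-1}--\eqref{assum-rsrm-3} required by Algorithm \ref{alg:rsrm}, with Lipschitz/retr-smoothness constant $L_k := \max\{\hat{L}_k,\bar{L}_k\}$ and noise bound $\delta$. This lets me call $\mathrm{RStorm}(\psi_k, L_k, T, x^k)$ and appeal to Lemma \ref{lem:rsrm}, which yields
\begin{equation*}
\mathbb{E}[\|\grad \psi_k(\hat{x})\|] \leq \frac{w^{1/6}\sqrt{2M}+2M^{3/4}}{\sqrt{T}} + \frac{2\delta^{1/3}}{T^{1/3}}.
\end{equation*}

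Next, I would track the dependence of $\kappa$, $c$, $w$, and $M$ on $L_k$. From the definitions in Lemma \ref{lem:rsrm}, one reads off $\kappa = bG^{2/3}/L_k = \Theta(L_k^{-1})$, and then $c = 10L_k^2 + G^2/(7L_k\kappa^3) = \Theta(L_k^2)$; the three terms comprising $w$ are respectively $\Theta(1)$, $\Theta(1)$, and $\Theta(1)$, so $w = \Theta(1)$. Substituting into $M = \frac{6}{\kappa}(\psi_k(x^k)-\psi_{k,\min}) + \frac{w^{1/3}\delta^2}{2L_k^2\kappa^2} + \frac{\kappa^2 c^2}{L_k^2}\ln(T+2)$, and using that $\psi_k(x^k)-\psi_{k,\min}$ is bounded uniformly in $k$ (as established in the proof of Theorem \ref{manial2-iter} via compactness of $\mathcal M$ and the uniform bound $\|z^k\|\leq z_{\max}$), I obtain $M = \tilde{\mathcal{O}}(L_k)$. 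Plugging these scalings back gives the simplified bound
\begin{equation*}
\mathbb{E}[\|\grad \psi_k(\hat{x})\|] \leq \tilde{\mathcal{O}}\!\left(\frac{L_k^{1/2}+L_k^{3/4}}{\sqrt{T}} + \frac{1}{T^{1/3}}\right),
\end{equation*}
and since both $\hat{L}_k$ and $\bar{L}_k$ are affine in $\sigma_k$, one has $L_k = \Theta(\sigma_k)$.

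Finally, I would settle the two options. For Option I, I enforce each of the three terms on the right-hand side to be at most $\epsilon_k/3$: the $L_k^{3/4}/\sqrt T$ term forces $T \gtrsim \sigma_k^{3/2}/\epsilon_k^2$, the $L_k^{1/2}/\sqrt T$ term forces $T \gtrsim \sigma_k/\epsilon_k^2$, and the $T^{-1/3}$ term forces $T \gtrsim 1/\epsilon_k^3$; taking the maximum yields the stated $T_k = \tilde{\mathcal{O}}(1/\epsilon_k^3 + (\sigma_k^{1.5}+\sigma_k)/\epsilon_k^2)$. For Option II, I simply substitute $T = 2^k$ into the displayed bound and use $L_k = \Theta(\sigma_k)$ to obtain $\tilde{\mathcal{O}}((\sigma_k^{3/4}+\sigma_k^{1/2})/2^{k/2} + 1/2^{k/3})$.

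The only subtle point, and the step I expect to require the most care, is the uniform-in-$k$ boundedness of $\psi_k(x^k)-\psi_{k,\min}$ that feeds into $M$; without it, the leading coefficient in $M$ would blow up with $k$ and spoil the $\tilde{\mathcal{O}}$ accounting. This follows as in the deterministic proof of Theorem \ref{manial2-iter} from compactness of $\mathcal M$, the uniform bound $\|z^k\| \leq z_{\max}$ provided by the dual step-size rule, and the Moreau-envelope estimate \eqref{eq:moreau-gradient-bound}. Everything else is bookkeeping of exponents.
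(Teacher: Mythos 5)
Your proposal is correct and follows exactly the route the paper intends: the paper omits this proof as "a direct application of Lemmas \ref{lem:rsrm} and \ref{lem:deter:L}," and your tracking of $\kappa=\Theta(L_k^{-1})$, $c=\Theta(L_k^2)$, $w=\Theta(1)$, $M=\tilde{\mathcal{O}}(L_k)$ reproduces verbatim the scaling the authors themselves record in the paragraph following Lemma \ref{lem:deter:L}, after which the exponent bookkeeping for both options is as you describe. Your flagged subtlety (uniform boundedness of $\psi_k(x^k)-\psi_{k,\min}$ via compactness and $\|z^k\|\leq z_{\max}$) is indeed the one ingredient the "direct application" claim quietly relies on, and you resolve it the same way the deterministic proof of Theorem \ref{manial2-iter} does.
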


\subsection{Convergence analysis of \textbf{StoManIAL}}
We have shown the oracle complexity for solving the subproblems of \textbf{StoManIAL}. Now, let us first investigate the outer iteration complexity and conclude this subsection with the overall oracle complexity.
\subsubsection{Outer iteration complexity of \textbf{StoManIAL}} Without specifying a subroutine to obtain $x^{k+1}$, we establish the outer iteration complexity
result of Algorithm \ref{alg:stomanial} in the following lemma. Since the proof is similar to Theorem \ref{the:milam:outer}, we omit it. 
\begin{theorem}[Iteration complexity of \textbf{StoManIAL} with \textbf{Option I}]\label{lem:milam:outer-1}
    Suppose that Assumptions \ref{assum} and \ref{assum:stochatis} hold. For  $b>1$,  if $\sigma_k = b^k$ and $\epsilon_k = 1/\sigma_k$, given $\epsilon>0$, then Algorithm \ref{alg:stomanial} needs at most $K:= \log_{b}\left(\frac{\ell_h + \|z_{\max}\|+1}{ \epsilon}\right)$ iterations to produce an $\epsilon$-KKT solution pair $(x^{K+1},y^{K+1},\bar{z}^{K+1})$ of \eqref{eq:epsi-kkt}, where 
    \begin{equation}\label{def:zbar-1}
       \bar{z}^{k+1}: = z^k -  \sigma_k(\mathcal{A}x^{k+1} - y^{k+1}), \forall k\geq 0, ~~ z_{\max}: = \frac{\beta_0 \pi^2 }{6} \|\mathcal{A}x^0 - y^0\|.  
    \end{equation}
\end{theorem}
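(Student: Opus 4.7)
The plan is to mirror the argument of Theorem \ref{the:milam:outer} while carefully tracking where expectations over $\xi$ enter. First I would use the \textbf{Option I} termination criterion \eqref{eq:solve-x-1}, combined with the stochastic analogue of the gradient formula \eqref{eq:grad-psik-1}, to derive that
\[
\mathbb{E}_\xi\!\left[\left\|\mathcal{P}_{T_{x^{k+1}}\mathcal{M}}\!\left(\nabla f(x^{k+1},\xi)+\sigma_k \mathcal{A}^*\bigl(\mathcal{A}x^{k+1}-z^k/\sigma_k-\mathrm{prox}_{h/\sigma_k}(\mathcal{A}x^{k+1}-z^k/\sigma_k)\bigr)\right)\right\|\right]\le \epsilon_k.
\]
Using the definition of $y^{k+1}$ in the algorithm and identifying $\bar{z}^{k+1}=z^k-\sigma_k(\mathcal{A}x^{k+1}-y^{k+1})$ as in \eqref{def:zbar-1}, the inner expression simplifies to $\nabla f(x^{k+1},\xi)+\mathcal{A}^*\bar z^{k+1}$, which gives the first KKT-residual bound in \eqref{eq:epsi-kkt-2} at level $\epsilon_k$.

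Next, the optimality condition for the proximal mapping in the $y$-update yields, exactly as in the deterministic case, $0\in \bar z^{k+1}+\partial h(y^{k+1})$, so $\mathrm{dist}(-\bar z^{k+1},\partial h(y^{k+1}))=0$ deterministically. For the feasibility residual, I would use the same telescoping argument as in Theorem \ref{the:milam:outer}: the step-size choice (which in \textbf{StoManIAL} uses $(k+1)\log^2(k+2)$ rather than $(k+1)^2\log(k+2)$, but is still summable) together with $z^0=0$ and the recursion $z^{k+1}=z^k-\beta_{k+1}(\mathcal{A}x^{k+1}-y^{k+1})$ implies
\[
\|z^{k+1}\|\le \sum_{l=1}^{\infty}\beta_l\|\mathcal{A}x^l-y^l\|\le \frac{\beta_0\pi^2}{6}\|\mathcal{A}x^0-y^0\|=z_{\max},
\]
after using $\sum_{l\ge 1}\tfrac{1}{(l+1)\log^2(l+2)}\le \tfrac{\pi^2}{6\log^2 2}\cdot\log^2 2$ (or the analogous summable bound). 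Then the Moreau-envelope identity combined with \eqref{eq:moreau-gradient-bound} gives
$\|\mathcal{A}x^{k+1}-y^{k+1}\|\le (\ell_h+\|z_{\max}\|)/\sigma_k$.

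Finally, with $\sigma_k=b^k$ and $\epsilon_k=1/\sigma_k$, both the stationarity residual and the feasibility residual decay like $b^{-k}$, so choosing $K=\log_b((\ell_h+\|z_{\max}\|+1)/\epsilon)$ ensures all three quantities in \eqref{eq:epsi-kkt-2} are below $\epsilon$, making $(x^{K+1},y^{K+1},\bar z^{K+1})$ an $\epsilon$-KKT pair. The main obstacle, which is mostly cosmetic, is verifying that the modified dual step-size rule in Algorithm \ref{alg:stomanial} (with $(k+1)\log^2(k+2)$ in the denominator instead of $(k+1)^2\log(k+2)$) still produces a summable series bounding $\|z^k\|$ uniformly by $z_{\max}$; everything else is a direct transcription of Theorem \ref{the:milam:outer} with the expectation attached only to the first residual, since the other two residuals are pathwise deterministic given the iterates.
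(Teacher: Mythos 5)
Your proposal is essentially the paper's own argument: the paper omits the proof of this theorem, stating only that it is ``similar to Theorem~\ref{the:milam:outer},'' and your transcription (expectation attached only to the stationarity residual, pathwise bounds for the other two) is exactly that intended adaptation. The one substantive point you flag --- the modified dual step-size denominator $(k+1)\log^2(k+2)$ --- is real but you leave it unresolved: the termwise comparison used in Theorem~\ref{the:milam:outer} (namely $\log(l+2)>1$ so each term is at most $1/(l+1)^2$) does \emph{not} carry over, since $(l+1)\log^2 2 \le \log^2(l+2)$ fails for large $l$; the series $\sum_{l\ge 1} \frac{\log^2 2}{(l+1)\log^2(l+2)}$ still converges (it is a Bertrand-type series), so boundedness of $z^k$ and the whole complexity conclusion survive, but the specific constant $z_{\max}=\frac{\beta_0\pi^2}{6}\|\mathcal{A}x^0-y^0\|$ would need to be replaced by the actual (finite) value of this sum, and note also that Algorithm~\ref{alg:stomanial} normalizes by $\beta_1\|\mathcal{A}x^1-y^1\|$ rather than $\beta_0\|\mathcal{A}x^0-y^0\|$. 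Since the paper retains the deterministic $z_{\max}$ verbatim, this imprecision is shared with the source rather than introduced by you; everything else in your argument is correct.
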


Analogous to Theorem \ref{lem:milam:outer-1}, we also have the following theorem on the iteration complexity of \textbf{StoManIAL} with a fixed number of inner iterations. 
\begin{theorem}[Iteration complexity of \textbf{StoManIAL} with \textbf{Option II}]\label{stomanial2-iter}
Suppose that Assumption \ref{assum} and \ref{assum:stochatis} hold.   Let $\sigma_k = \frac{1}{2^{2k/7}}$.  Given $\epsilon>0$, \textbf{StoManIAL} with \textbf{Option II} needs at most $\mathcal{O}(\log_2(\epsilon^{-7/2}))$ iterations to produce an $\epsilon$-KKT point pair $(x^{K+1},y^{K+1},\bar{z}^{K+1})$, where $\bar{z}^{K+1}$ is defined in \eqref{def:zbar-1}.
\end{theorem}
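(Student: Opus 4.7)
The plan is to mirror the structure of Theorem~\ref{manial2-iter}, replacing the deterministic RGD bound with the stochastic RStorm bound from Lemma~\ref{theo:rsrm-1}. Note that based on the analogous typographical pattern in Theorem~\ref{manial2-iter}, where the statement writes $\sigma_k = 1/2^{k/3}$ while the proof uses $\sigma_k = 2^{k/3}$, I will interpret the schedule here as $\sigma_k = 2^{2k/7}$ (an increasing penalty sequence). First, I would repeat the argument of Theorem~\ref{the:milam:outer} verbatim to obtain that $z^{k+1}$ is uniformly bounded by $z_{\max}$ (the update rule for $\beta_{k+1}$ in Algorithm~\ref{alg:stomanial} is chosen precisely so that the dual path remains summable), and hence that the $y$-optimality and the feasibility bound persist unchanged:
\begin{equation*}
\mathrm{dist}\bigl(-\bar z^{k+1},\partial h(y^{k+1})\bigr)=0,\qquad \|\mathcal A x^{k+1}-y^{k+1}\|\le \frac{\ell_h+\|z_{\max}\|}{\sigma_k}.
\end{equation*}

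Next, I would translate the Riemannian gradient output of RStorm into an $\epsilon$-stationarity statement for \eqref{eq:epsi-kkt-2}. Using \eqref{eq:grad-psik-1} together with the identity $\sigma_k\mathcal A x^{k+1}-z^k-\sigma_k y^{k+1} = -\bar z^{k+1}$ obtained from the prox update, one obtains
\begin{equation*}
\grad\psi_k(x^{k+1},\xi) = \mathcal P_{T_{x^{k+1}}\mathcal M}\bigl(\nabla f(x^{k+1},\xi)-\mathcal A^*\bar z^{k+1}\bigr),
\end{equation*}
so that the RStorm bound in Lemma~\ref{theo:rsrm-1} under \textbf{Option II} directly controls the first component of \eqref{eq:epsi-kkt-2}:
\begin{equation*}
\mathbb E_{\xi}\bigl[\|\mathcal P_{T_{x^{k+1}}\mathcal M}(\nabla f(x^{k+1},\xi)-\mathcal A^*\bar z^{k+1})\|\bigr]\le \tilde{\mathcal O}\!\left(\frac{\sigma_k^{3/4}+\sigma_k^{1/2}}{2^{k/2}}+\frac{1}{2^{k/3}}\right).
\end{equation*}

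With $\sigma_k = 2^{2k/7}$, a direct substitution yields $\sigma_k^{3/4}/2^{k/2} = 2^{3k/14-7k/14}=2^{-2k/7}$, while the other two summands, $\sigma_k^{1/2}/2^{k/2}=2^{-4k/14}$ and $2^{-k/3}$, decay at least as fast. Hence the stationarity residual is $\tilde{\mathcal O}(2^{-2k/7})$ and the feasibility residual is $\mathcal O(2^{-2k/7})$. Setting both below $\epsilon$ requires $k\ge \tfrac{7}{2}\log_2(1/\epsilon)$ up to absolute constants, so $K=\mathcal O(\log_2(\epsilon^{-7/2}))$ outer iterations suffice, giving the claimed bound.

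The main obstacle, and the only step that is not a routine transcription of the deterministic argument, is verifying that the boundedness of $\psi_k(x^k)-\psi_{k,\min}$ carries over to the stochastic regime so that the constants absorbed into the $\tilde{\mathcal O}$ in Lemma~\ref{theo:rsrm-1} remain uniform in $k$ (they depend on $\psi_k(x_1)-\psi_{k,\min}$, on $G$, and polynomially on $L_k$). This is handled exactly as in the proof of Theorem~\ref{manial2-iter}: the Moreau sandwich inequality plus compactness of $\mathcal M$ and uniform boundedness of $z^k$ yield a $k$-independent bound on $\psi_k(x^k)-\psi_{k,\min}$, after which the $\sigma_k$-dependence is fully captured by the powers of $\sigma_k$ appearing in $L_k$ from Lemma~\ref{lem:deter:L}, which is already encoded in the $\tilde{\mathcal O}$ expression we use.
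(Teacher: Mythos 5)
Your proposal is correct and follows essentially the same route as the paper's proof: reuse the dual-boundedness and feasibility bounds from Theorem \ref{the:milam:outer}, invoke Lemma \ref{theo:rsrm-1} for the stationarity residual, substitute the (increasing) schedule $\sigma_k = 2^{2k/7}$, and solve for $K$; you even resolve the $\sigma_k = 1/2^{2k/7}$ versus $2^{2k/7}$ discrepancy exactly as the paper's proof implicitly does. The only blemish is a harmless arithmetic slip ($\sigma_k^{1/2}/2^{k/2} = 2^{-5k/14}$, not $2^{-4k/14}$), which does not affect the dominant $2^{-2k/7}$ rate or the conclusion.
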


\begin{proof}
    Similar to Theorem \ref{the:milam:outer}, for any $k\in \mathbb{N}$, we have that
    \begin{equation}
        \begin{aligned}
\mathrm{dist}(-\bar{z}^{k+1}, \partial h(y^{k+1})) & = 0, \\
\|\mathcal{A}x^{k+1}  - y^{k+1}\| & \leq \frac{\ell_h + \|z_{\max}\|}{\sigma_k}.
    \end{aligned}
    \end{equation}
    Moreover, it follows from Lemma \ref{theo:rsrm-1} that 
    \begin{equation}
        \begin{aligned}
          \mathbb{E}_{\xi} \left[  \left\|\mathcal{P}_{T_{x^{k+1}}\mathcal{M}} \left( 
 \nabla f(x^{k+1},\xi) + \mathcal{A}^*  \bar{z}^{k+1} \right)  \right\|\right] & \leq \mathcal{O}\left(\frac{\sigma_k^{3/4} + \sigma_k^{1/2}}{2^{k/2}} + \frac{1}{2^{k/3}} \right).
        \end{aligned} 
    \end{equation}
   Since $\sigma_k = 2^{\frac{2k}{7}}$, we have that
     \begin{equation}
        \begin{aligned}
&  \mathbb{E}_{\xi} \left[  \left\|\mathcal{P}_{T_{x^{k+1}}\mathcal{M}} \left( 
 \nabla f(x^{k+1},\xi) + \mathcal{A}^*  \bar{z}^{k+1} \right)  \right\|\right]  \leq \mathcal{O}(2^{-2k/7} + 2^{-5k/14} + 2^{-k/3}),
    \end{aligned}
    \end{equation}
    and 
    $$
    \|\mathcal{A}x^{k+1}  - y^{k+1}\|  \leq  \mathcal{O}(2^{-2k/7}).
    $$
    Letting $K = \mathcal{O}(\log_2(\epsilon^{-\frac{7}{2}}))$, one can shows that 
         \begin{equation}
         \left\{
        \begin{aligned}
\mathbb{E}_{\xi} \left[\left\|\mathcal{P}_{T_{x^{K+1}}\mathcal{M}} \left( 
 \nabla f(x^{K+1},\xi) + \mathcal{A}^*  \bar{z}^{K+1} \right)  \right\| \right] & \leq \mathcal{O}(\epsilon), \\
\mathrm{dist}(-\bar{z}^{K+1}, \partial h(y^{K+1})) & = 0, \\
\|\mathcal{A}x^{K+1}  - y^{K+1}\| &\leq  \mathcal{O}(\epsilon),
    \end{aligned}
    \right.
    \end{equation}
    which implies that $(x^{K+1},y^{K+1},\bar{z}^{K+1})$ is an $\epsilon$-KKT point pair. 
\end{proof}

\subsubsection{Overall oracle complexity of \textbf{StoManIAL}}To measure the oracle complexity of our algorithm,  we give the definition of a stochastic first-order oracle for \eqref{prob:stochastic}.
\begin{definition}[\textbf{stochastic first-order oracle}]\label{def:stochastic}
    For the problem \eqref{prob:stochastic}, a stochastic first-order oracle can
be defined as follows: compute the Euclidean gradient $\nabla f(x,\xi)$ given a sample $\xi\in \mathcal{D}$, the proximal operator $\prox_h(x)$ and the retraction operator $\mathcal{R}$.  
\end{definition}
We are now able to establish the overall oracle complexity in the following theorem.
\begin{theorem}[Oracle complexity of \textbf{StoManIAL}]\label{theo:sto:oracle}
 Suppose that Assumptions \ref{assum} and \ref{assum:stochatis} hold. We choose Algorithm \ref{alg:rsrm} as the subroutine of \textbf{StoManIAL}. The following holds:
 \begin{itemize}
    \item[(a)] If we set $\sigma_k = b^k$ for some $b> 1$ and $\epsilon_k =1 /\sigma_k$, then \textbf{StoManIAL} with \textbf{Option I} finds an $\epsilon$-KKT point, after at most $\tilde{ \mathcal{O}}(\epsilon^{-3.5})$ calls to the stochastic first-order oracle.
    \item[(b)] If we set $\sigma_k = \frac{1}{2^{2k/7}}$, then \textbf{StoManIAL} with \textbf{Option II} finds an $\epsilon$-KKT point, after at most $\tilde{\mathcal{O}}(\epsilon^{-3.5})$ calls to the stochastic first-order oracle.
 \end{itemize}
\end{theorem}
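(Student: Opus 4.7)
The plan is to derive the overall oracle complexity by multiplying (or rather summing over) the per–outer-iteration inner oracle cost from Lemma \ref{theo:rsrm-1} against the outer iteration count provided by Theorems \ref{lem:milam:outer-1} and \ref{stomanial2-iter}. Since both inner and outer results have already been established, the remaining work is essentially a careful bookkeeping of how $\sigma_k$ and $\epsilon_k$ enter the inner complexity $T_k$, followed by summation of a geometric-type series.

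For part (a), I would first invoke Theorem \ref{lem:milam:outer-1} to bound the number of outer iterations by $K = \log_b((\ell_h + \|z_{\max}\| + 1)/\epsilon) = \mathcal{O}(\log(1/\epsilon))$. Then for each outer iteration, Lemma \ref{theo:rsrm-1} with \textbf{Option I} guarantees that
\[
T_k = \tilde{\mathcal{O}}\!\left(\frac{1}{\epsilon_k^{3}} + \frac{\sigma_k^{3/2}+\sigma_k}{\epsilon_k^{2}}\right)
\]
stochastic oracle calls suffice. Substituting $\epsilon_k = 1/\sigma_k = b^{-k}$ gives $T_k = \tilde{\mathcal{O}}(\sigma_k^{3} + \sigma_k^{7/2} + \sigma_k^{3}) = \tilde{\mathcal{O}}(\sigma_k^{7/2})$. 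Summing the geometric series,
\[
\sum_{k=1}^{K} T_k \le \tilde{\mathcal{O}}\!\left(\sum_{k=1}^{K} b^{7k/2}\right) = \tilde{\mathcal{O}}(b^{7K/2}) = \tilde{\mathcal{O}}(\epsilon^{-7/2}),
\]
since the last term dominates a geometric sum with ratio $b^{7/2} > 1$, and $b^K = \mathcal{O}(1/\epsilon)$.

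For part (b), Theorem \ref{stomanial2-iter} already yields $K = \mathcal{O}(\log_2 \epsilon^{-7/2})$. By the \textbf{Option II} rule, the $k$-th outer step uses exactly $2^k$ inner oracle calls, so the total oracle cost is
\[
\sum_{k=1}^{K} 2^k = 2^{K+1}-2 = \mathcal{O}(\epsilon^{-7/2}),
\]
which together with the logarithmic factors absorbed in $\tilde{\mathcal{O}}$ (coming from the $\ln(T+2)$ term in Lemma \ref{lem:rsrm}) yields $\tilde{\mathcal{O}}(\epsilon^{-7/2}) = \tilde{\mathcal{O}}(\epsilon^{-3.5})$.

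The main technical point, and the only place where care is required, is tracking exactly how $L_k = \max\{\hat{L}_k, \bar{L}_k\}$ scales in $\sigma_k$ when it is plugged into the bound of Lemma \ref{lem:rsrm}; since $\kappa = \mathcal{O}(L_k^{-1})$, $c = \mathcal{O}(L_k^2)$, $w = \mathcal{O}(1)$, and $M = \mathcal{O}(L_k)$ (up to logs), the dominant balance between the $\tilde{\mathcal{O}}(L_k^{3/4}/\sqrt{T})$ term and the $\mathcal{O}(T^{-1/3})$ term is what forces the $\sigma_k^{3/2}+\sigma_k$ factor in $T_k$, and hence the exponent $7/2$ after substituting $\epsilon_k = 1/\sigma_k$. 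Once this balance is used correctly in part (a), part (b) becomes a straightforward consequence of the fixed-iteration schedule $2^k$ combined with the choice $\sigma_k = 2^{2k/7}$ tuned so that both the feasibility error $\mathcal{O}(\sigma_k^{-1})$ and the stationarity residual $\tilde{\mathcal{O}}(\sigma_k^{3/4}2^{-k/2} + 2^{-k/3})$ hit $\epsilon$ at the same $k$.
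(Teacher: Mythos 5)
Your proposal is correct and follows essentially the same route as the paper: combine the inner oracle cost from Lemma \ref{theo:rsrm-1} with the outer iteration counts from Theorems \ref{lem:milam:outer-1} and \ref{stomanial2-iter}, then sum over $k$ (your direct geometric-series summation in part (a) is a minor, slightly tighter variant of the paper's bound $\sum_k T_k \le K\,\max_k T_k$, but both land on $\tilde{\mathcal{O}}(\epsilon^{-3.5})$).
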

\begin{proof}
 Let $K$ denote the number of (outer) iterations of Algorithm \ref{alg:stomanial} to reach the accuracy $\epsilon$. Let us show (a) first.
    It follows from Theorem \ref{lem:milam:outer-1} that $K = \log_{b}\left(\frac{\ell_h + \|z_{\max}\|}{\sigma_0 \epsilon}\right) + 1$. Combining this with Lemma \ref{theo:rsrm-1}, one can bounds the number $T$ of calls to the stochastic first-order oracle:
    \be 
\begin{aligned}
    T = & \sum_{k = 1}^K \tilde{\mathcal{O}}\left(\frac{1}{\epsilon_\kappa^3} + \frac{\sigma_k^{1.5} + \sigma_k}{\epsilon_k^{2}}\right) = \sum_{k = 1}^K \mathcal{O}\left(\frac{1}{\epsilon_k^3} + \frac{1}{\epsilon_k^{3.5}}\right) \\
     \leq & K \tilde{\mathcal{O}}\left(\frac{1}{\epsilon^3} + \frac{1}{\epsilon^{3.5}}\right) =\tilde{\mathcal{O}}(\log_b( \epsilon^{-1}) \epsilon^{-3.5}).
\end{aligned}
    \ee

   In terms of (b), it follows from Theorem \ref{stomanial2-iter} and Lemma \ref{theo:rsrm-1} that
  \begin{equation}
    \begin{aligned}
    T = & \sum_{k = 1}^K l_k = \sum_{k = 1}^K 2^k = \sum_{k = 1}^K (2^{k+1} - 2^k) \\
    = & 2^{k+1} - 2 =2 \tilde{\mathcal{O}}(  2^{\log_2(\epsilon^{-3.5})} -1   )= \tilde{\mathcal{O}}(\epsilon^{-3.5}). 
\end{aligned}
  \end{equation}
    The proof is completed.
\end{proof}




\section{Numerical results}

In this section, we demonstrate the performance of the proposed methods \textbf{ManIAL} and \textbf{StoManIAL} on the sparse principal component analysis. We use \textbf{ManIAL-I} and \textbf{ManIAL-II} to denote Algorithms \textbf{ManIAL} with option I and option II, respectively. Due to that \textbf{StoManIAL} with option I is not checkable, we only give the comparison of $\textbf{StoManIAL}$ with option II. We compare those algorithms with the subgradient method (we call it Rsub) in \cite{li2021weakly} that achieves the state-of-the-art oracle complexity for solving \eqref{prob:stochastic}. All the tests were performed in MATLAB 2022a on a ThinkPad X1 with 4 cores and 32GB memory. 
The following relative KKT residual of problem \eqref{prob} is set to a stopping criterion for our \textbf{ManIAL}:
\begin{equation}\label{kkt stop1}
     \text{error}: = \max\left\{\eta_p,\eta_d,\eta_C\right\} \leq \mbox{tol},
\end{equation}
where ``tol'' is a given accuracy tolerance and
\begin{equation}\label{equ:stop kkt}
  \begin{aligned}
     \eta_p &:= \frac{ \left \|\mathcal{A}x^k - y^k\right\|}{1+ \left\|\mathcal{A}x^k\|+\|y^k\right\|},\\
     \eta_d &: =\frac{\left\|\mathcal{P}_{T_{x^k}\mathcal{M}}(\nabla f(x^k) - \mathcal{A}^*z^k)\right\|}{1+\left\|\nabla f(x^k)\right\|}, \\
     \eta_C &: = \frac{ \| z^k - \mbox{prox}_{h^*}(  z^k -\mathcal{A}x^k)\|}{1+\|z^k\|}.
  \end{aligned}
\end{equation}

 In the following experiments, we will first run our algorithms \textbf{ManIAL-I} and \textbf{ManIAL-II}, and terminate them when either condition \eqref{kkt stop1} is satisfied or the maximum iteration steps of 10,000 are reached.  The obtained function value of \textbf{ManIAL-I} is denoted as $F_M$.  For the other algorithms, we terminate them when either the objective function value satisfies $F(x^k)\leq F_M + 10^{-10}$ or the maximum iteration steps of 10,000 are reached.
\subsection{Sparse principal component analysis}
Given a data set $\{b_1,\ldots, b_m\}$ where $b_i\in\mathbb{R}^{n\times 1}$, the sparse principal component analysis (SPCA) problem is
\begin{equation}\label{spca}
\begin{aligned}
  \min_{X\in\mathbb{R}^{n\times r}}   \sum_{i=1}^{m}\|b_i - XX^Tb_i\|_2^2 + \mu \|X\|_1, ~ ~ \mbox{s.t. }~~  X^TX = I_r,
  \end{aligned}
\end{equation}
where $\mu > 0$ is a regularization parameter. Let $B = [b_1,\cdots, b_m ]^{T}\in\mathbb{R}^{m\times n}$, problem \eqref{spca} can be rewritten as: 
\begin{equation}\label{spcaM}
  \begin{aligned}
     \min_{X\in\mathbb{R}^{n\times r}}   -\mathrm{tr}(X^TB^TBX) + \mu \|X\|_1,     ~~  \mbox{s.t. }~~  X^TX = I_r.
  \end{aligned}
\end{equation}
Here, the constraint consists of the Stiefel manifold $\texttt{St}(n,r):=\{X\in\mathbb{R}^{n\times r}~:~X^\top X = I_r\}$. The tangent space of $\texttt{St}(n,r)$ is defined by $T_{X}\texttt{St}(n,r) = \{\eta\in \mathbb{R}^{n\times r}~:~X^\top \eta + \eta^\top X = 0\}$. Given any $U\in\mathbb{R}^{n\times r}$, the projection of $U$ onto $T_{X}\texttt{St}(n,r)$ is $\mathcal{P}_{T_{X}\texttt{St}(n,r)}(U) = U - X \frac{U^\top X + X^\top U}{2}$ \cite{AbsMahSep2008}. In our experiment, the data matrix $B\in\mathbb{R}^{m\times n}$ is produced by MATLAB function $\texttt{randn}(m, n)$, in which all entries of $B$ follow the standard Gaussian distribution. We shift the columns of $B$ such that they have zero mean, and finally the column vectors are normalized. We use the polar decomposition as the retraction mapping. 

\begin{figure}[htpb]
\centering
\setlength{\abovecaptionskip}{0.cm}
\subfigure[$r = 10, \mu = 0.4$]{
\includegraphics[width=0.3\textwidth]{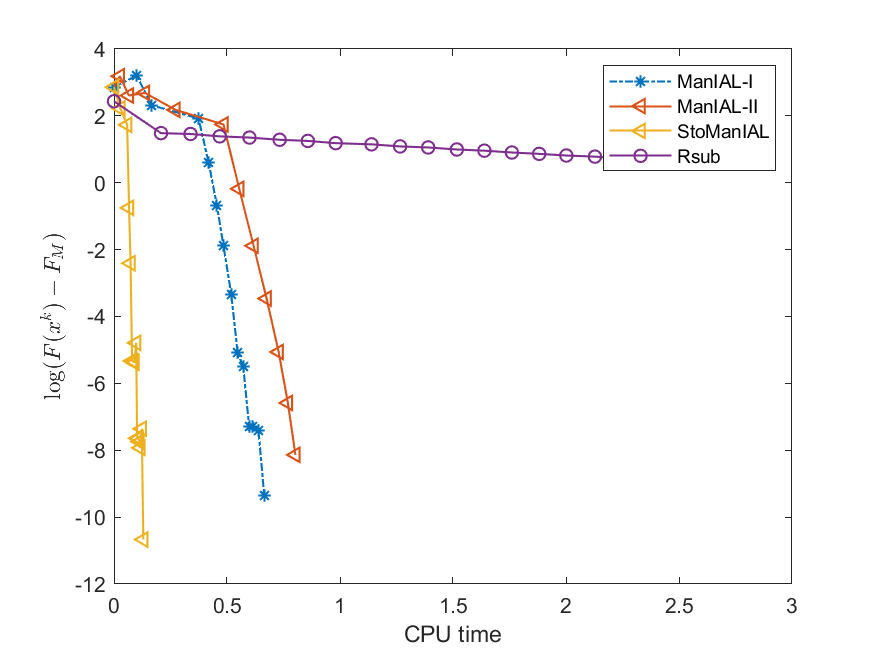}}
\subfigure[$r = 10, \mu = 0.6$]{
\includegraphics[width=0.3\textwidth]{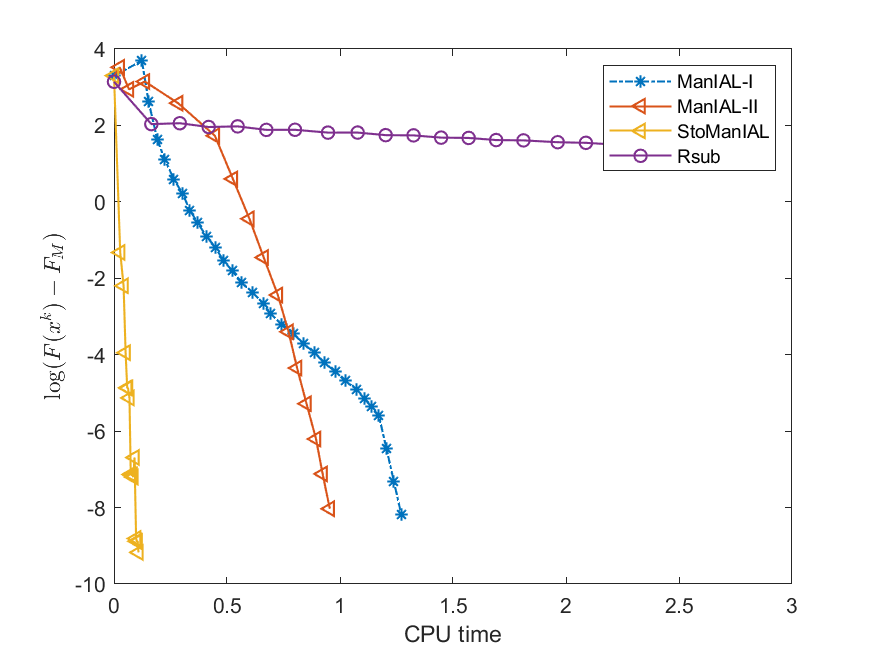}}
\subfigure[$r = 10, \mu = 0.8$]{
\includegraphics[width=0.3\textwidth]{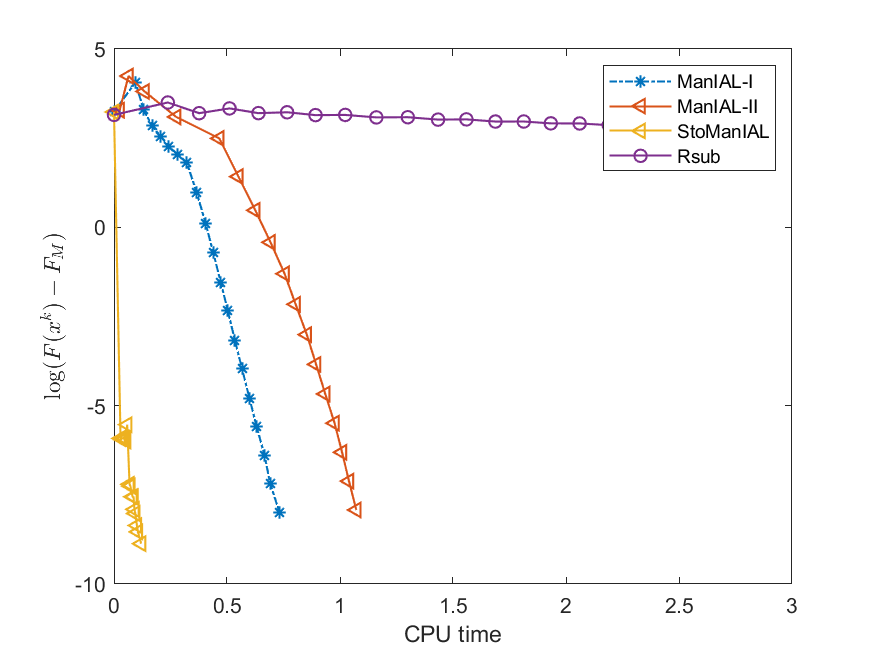}} \\
\subfigure[$r = 20, \mu = 0.4$]{
\includegraphics[width=0.3\textwidth]{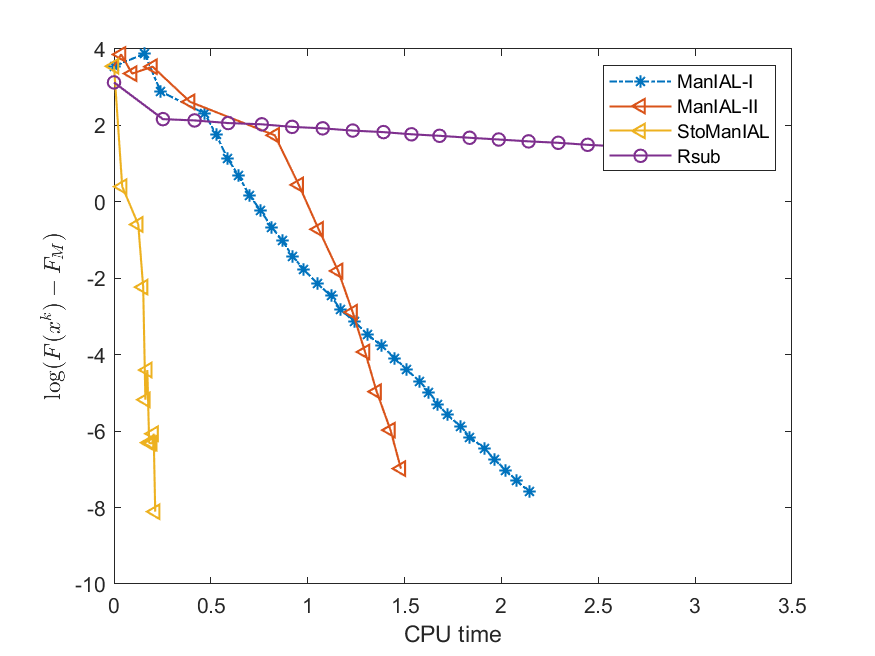}}
\subfigure[$r = 20, \mu = 0.6$]{
\includegraphics[width=0.3\textwidth]{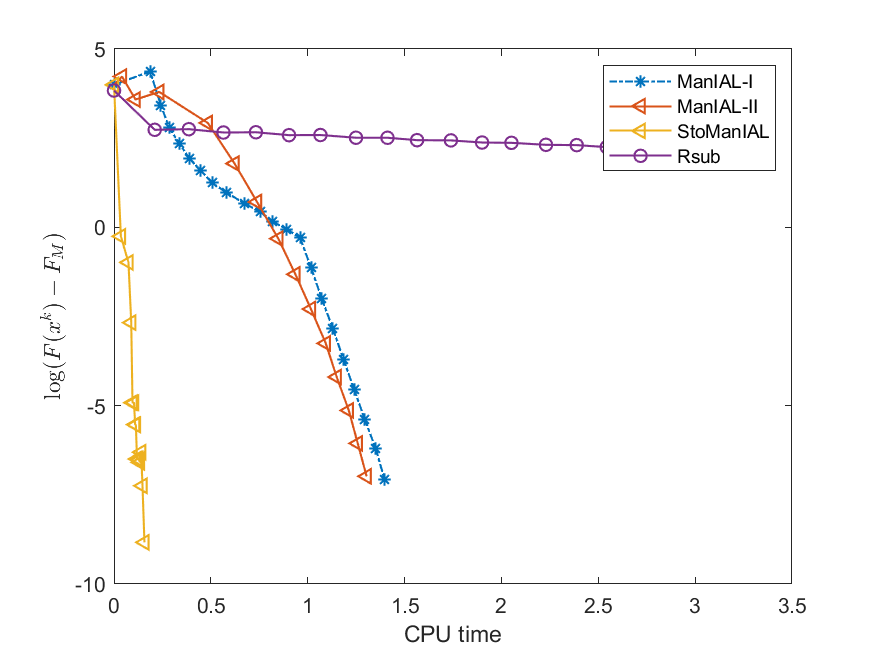}}
\subfigure[$r = 20, \mu = 0.8$]{
\includegraphics[width=0.3\textwidth]{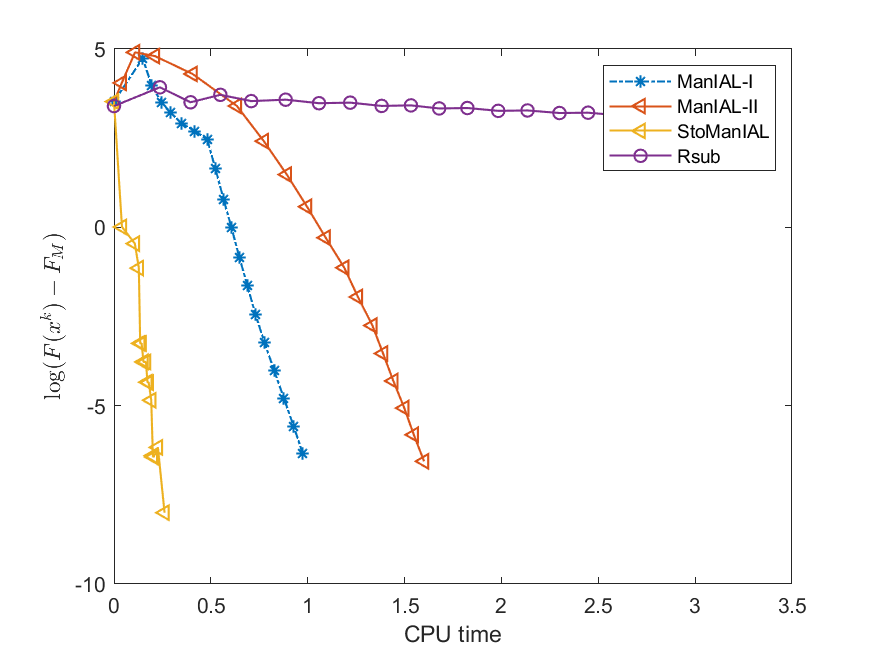}}
\caption{The performance of our algorithms on SPCA with random data $(n = 1000)$}
\label{fig:2}
\end{figure}
We first compare the performance of the proposed algorithms on the SPCA problem with random data, i.e.,  we set $m=5000$. For the \textbf{StoManIAL}, we partition the 50,000 samples into 100 subsets, and in each iteration, we sample one subset. The tolerance is set $\mathrm{tol} = 10^{-8}\times n\times r$.   Figure \ref{fig:2} presents the results of the four algorithms for fixed $n=500$ and varying $r=1,2$ and $\mu=0.4,0.6,0.8$. The horizontal axis represents CPU time, while the vertical axis represents the objective function value gap: $F(x^k) - F_M$, where $F_M$ is given by the \textbf{ManIAL-I}. The results indicate that \textbf{StoManIAL} outperforms the deterministic version. Moreover, among the deterministic versions, \textbf{ManIAL-I} and \textbf{ManIAL-II} have similar performance. In conclusion, compared with the Rsub, our algorithms achieve better performance.

Next, we conduct experiments on two real datasets: \textit{coil100} \cite{nene1996columbia} and \textit{mnist} \cite{deng2012mnist}. The \textit{coil100} dataset contains $n = 7200$ RGB images of 100 objects taken from different angles. The $\textit{mnist}$ dataset has $n = 8,000$ grayscale digit images of size $28\times 28$. Our experiments, illustrated in Figures \ref{fig:3} and \ref{fig:4}, demonstrate that our algorithms outperform Rsub.

\begin{figure}[htpb]
\centering
\setlength{\abovecaptionskip}{0.cm}
\subfigure[$r = 1, \mu = 0.1$]{
\includegraphics[width=0.3\textwidth]{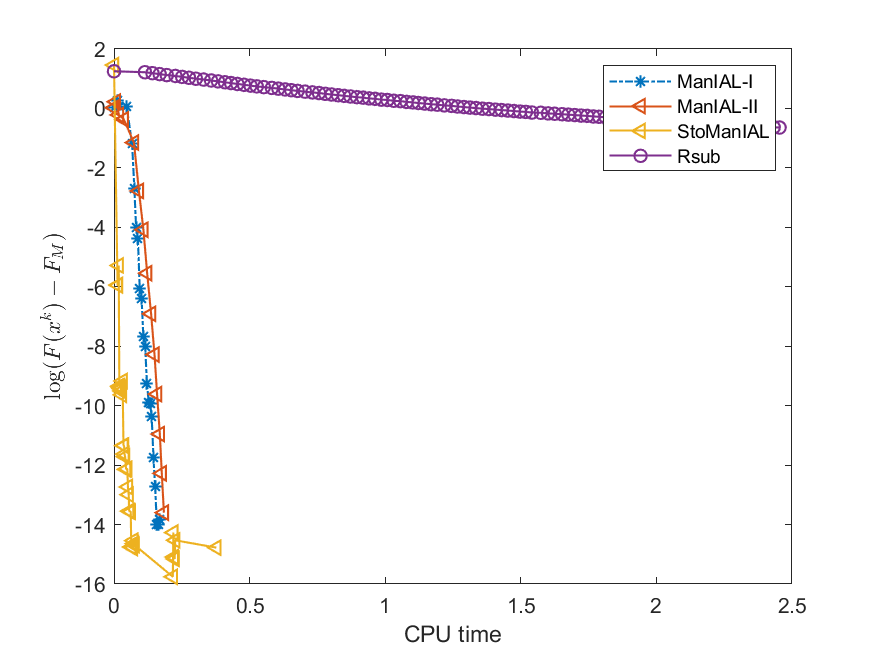}}
\subfigure[$r = 1, \mu = 0.2$]{
\includegraphics[width=0.3\textwidth]{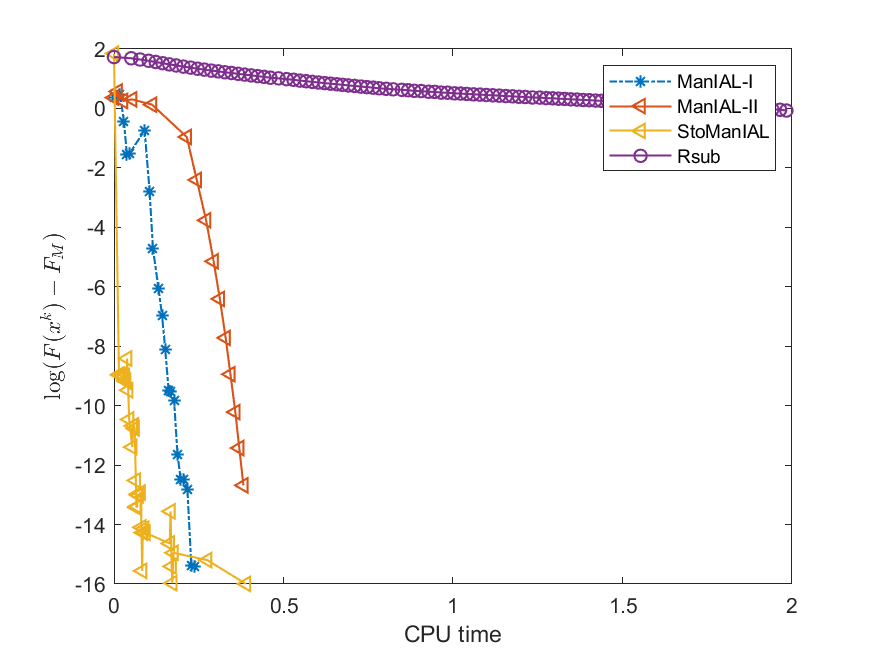}}
\subfigure[$r = 1, \mu = 0.3$]{
\includegraphics[width=0.3\textwidth]{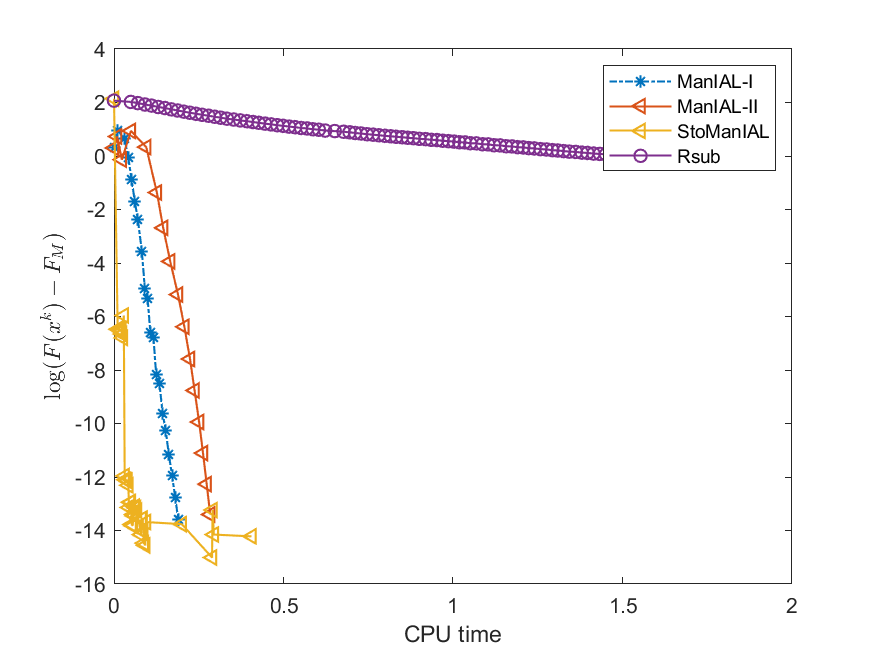}} \\
\subfigure[$r = 2, \mu = 0.1$]{
\includegraphics[width=0.3\textwidth]{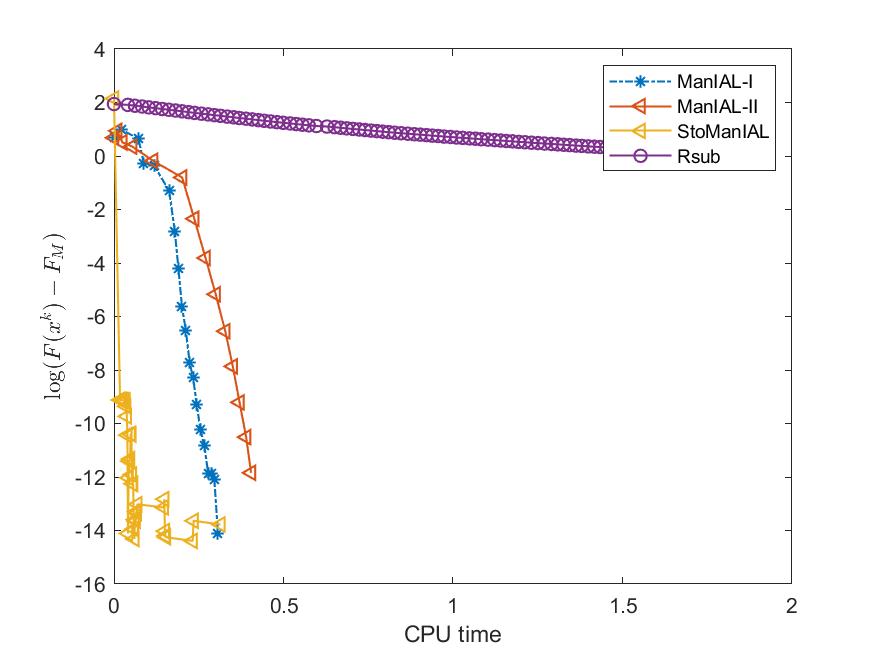}}
\subfigure[$r = 2, \mu = 0.2$]{
\includegraphics[width=0.3\textwidth]{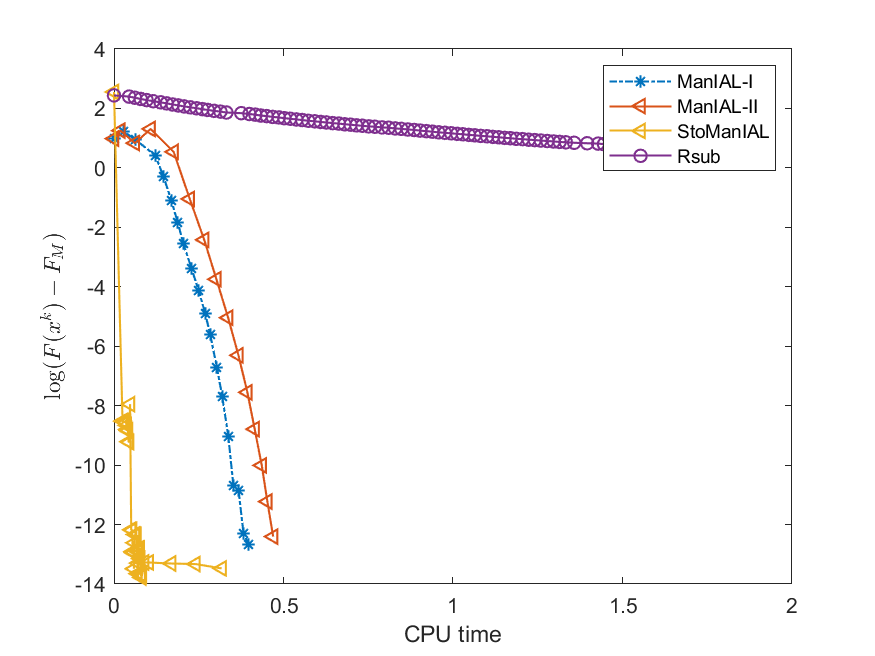}}
\subfigure[$r = 2, \mu = 0.3$]{
\includegraphics[width=0.3\textwidth]{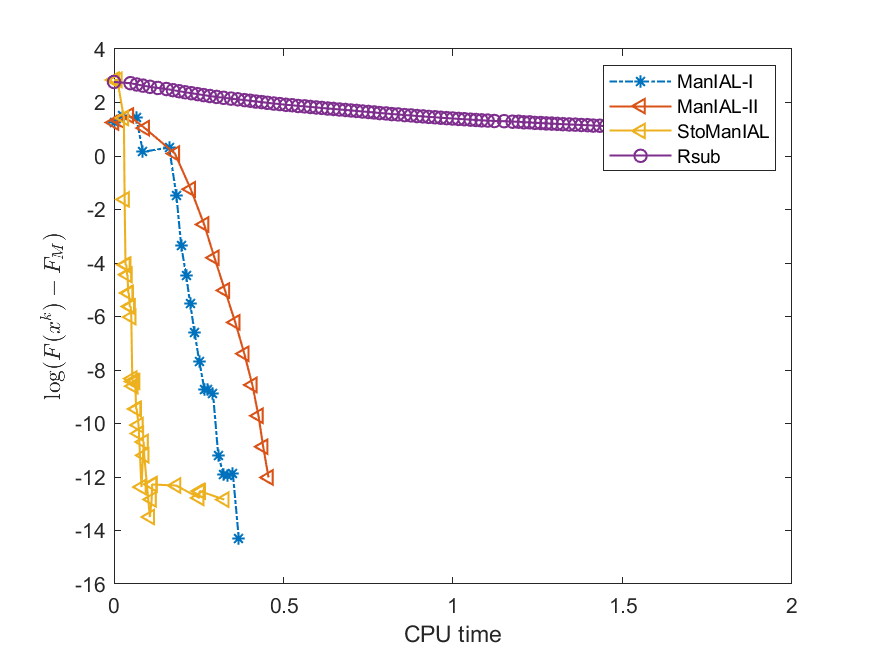}}
\caption{The performance of our algorithms on SPCA with \textit{mnist} data $(n = 784)$}
\label{fig:3}
\end{figure}

\begin{figure}[htpb]
\centering
\setlength{\abovecaptionskip}{0.cm}
\subfigure[$r = 1, \mu = 0.1$]{
\includegraphics[width=0.3\textwidth]{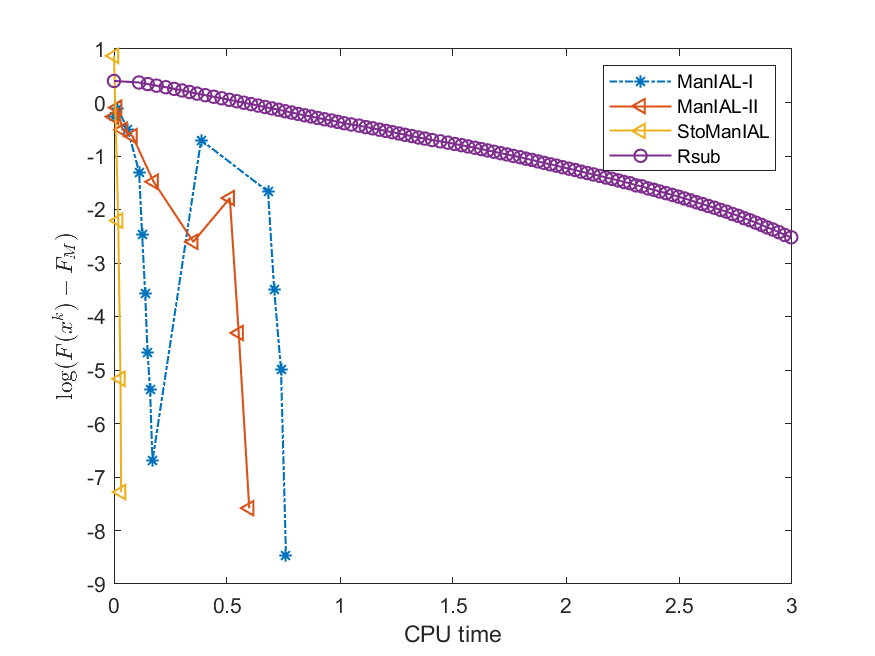}}
\subfigure[$r = 1, \mu = 0.2$]{
\includegraphics[width=0.3\textwidth]{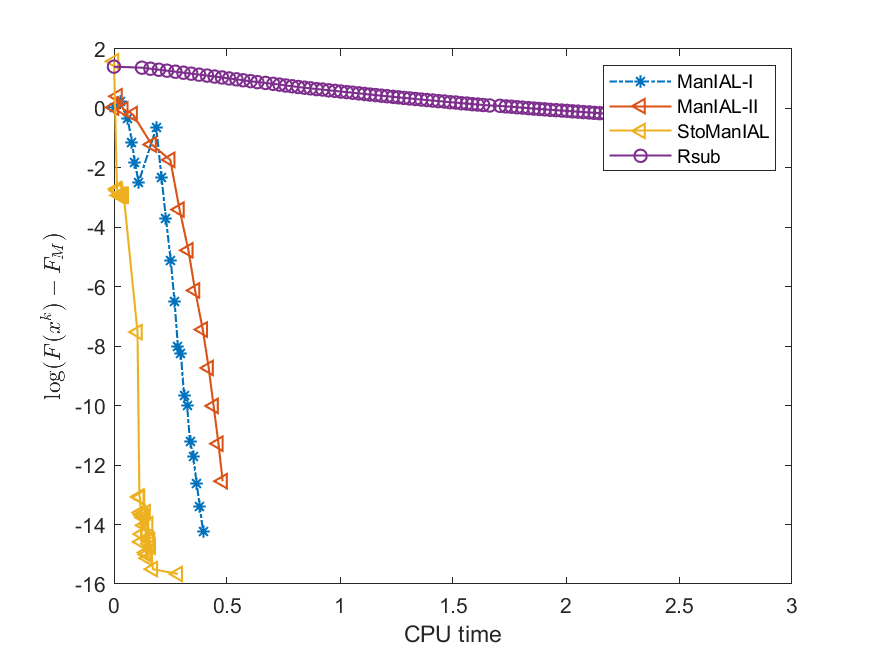}}
\subfigure[$r = 1, \mu = 0.3$]{
\includegraphics[width=0.3\textwidth]{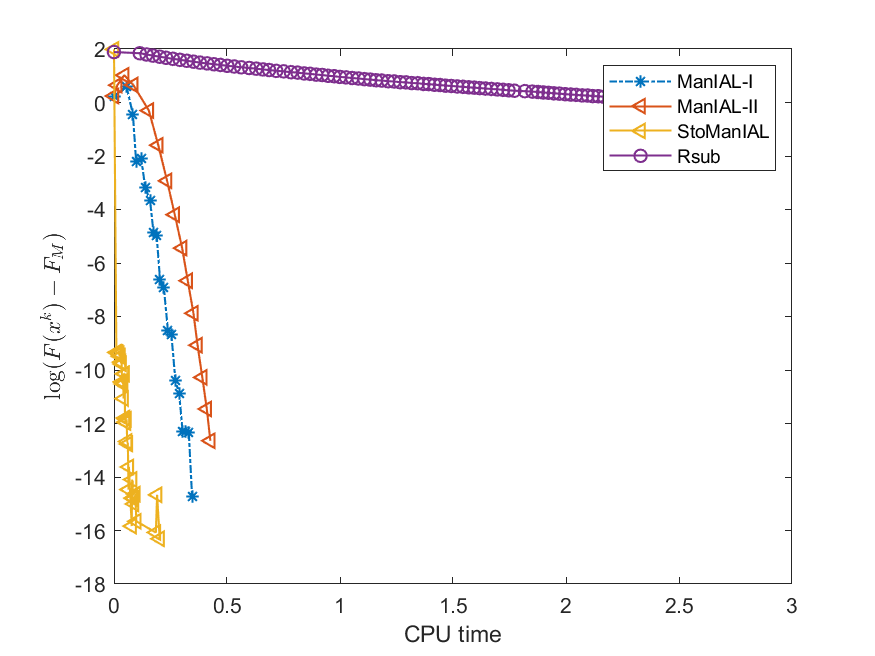}} \\
\subfigure[$r = 2, \mu = 0.1$]{
\includegraphics[width=0.3\textwidth]{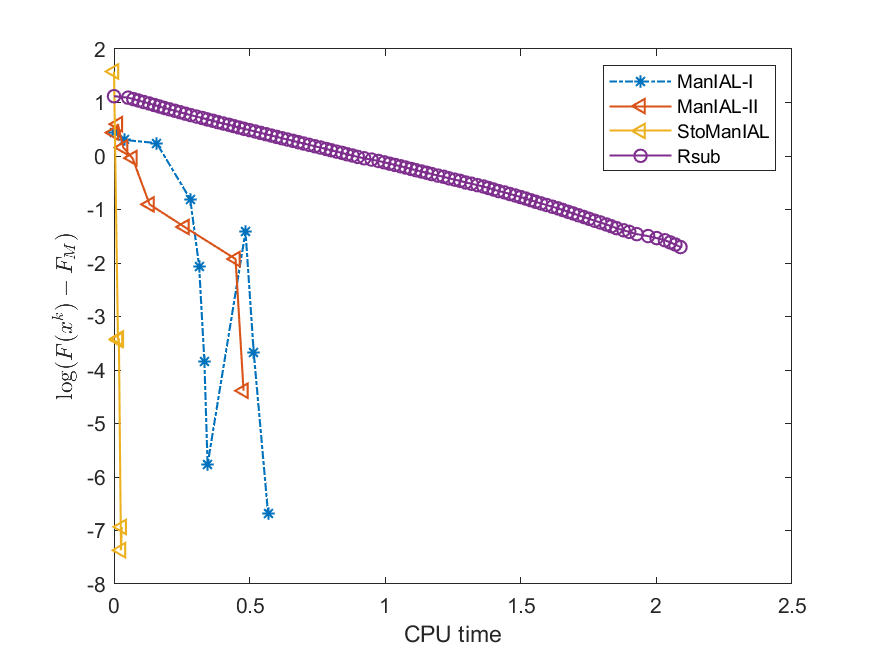}}
\subfigure[$r = 2, \mu = 0.2$]{
\includegraphics[width=0.3\textwidth]{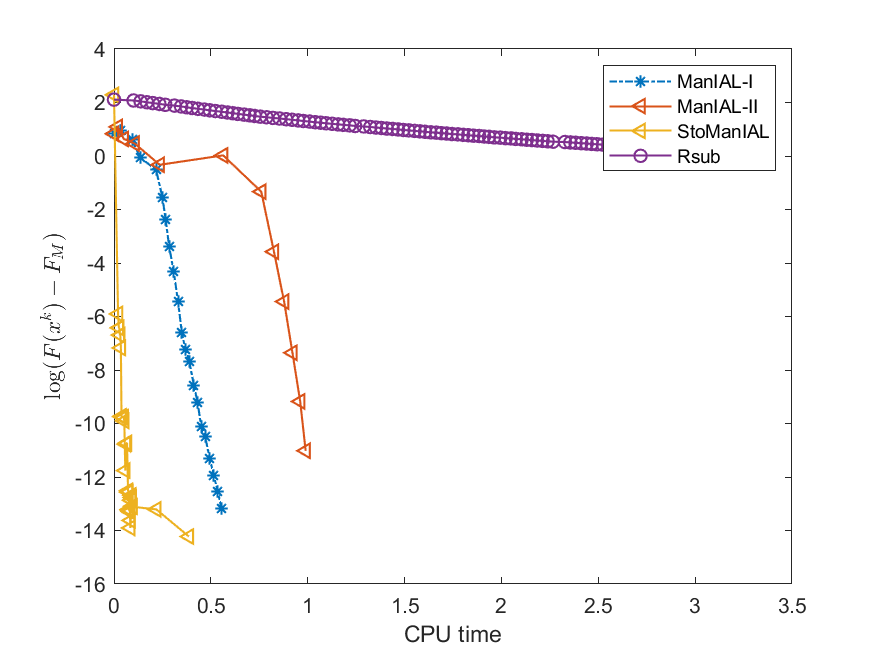}}
\subfigure[$r = 2, \mu = 0.3$]{
\includegraphics[width=0.3\textwidth]{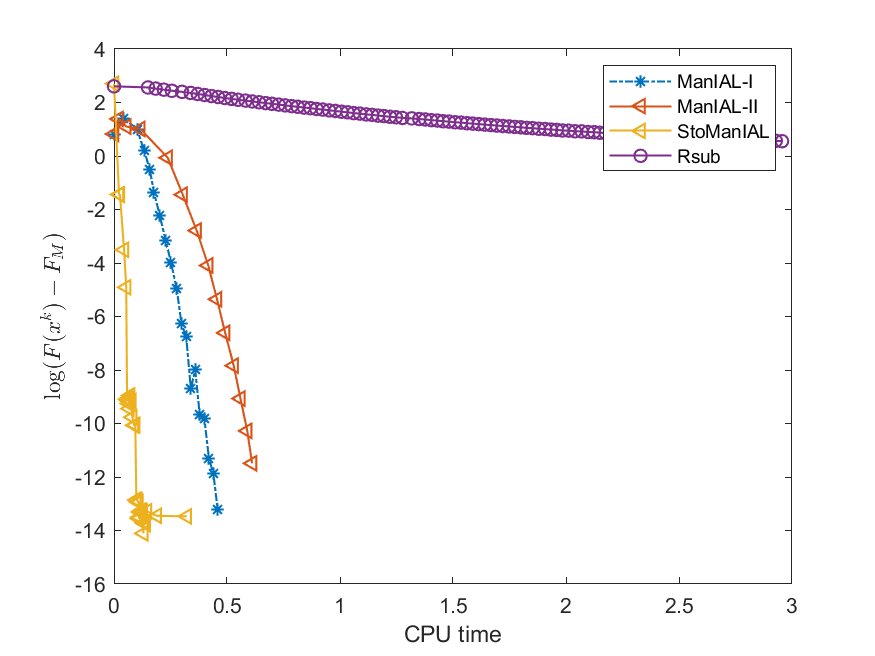}}
\caption{The performance of our algorithms on SPCA with \textit{coil100} data $(n = 1024)$}
\label{fig:4}
\end{figure}

\subsection{Sparse canonical correlation analysis}
Canonical correlation analysis (CCA) first proposed by Hotelling \cite{hotelling1992relations} aims to tackle the associations between two sets of variables.  The sparse CCA (SCCA) model is proposed to improve the interpretability of canonical variables by restricting the linear combinations to a subset of original variables. Suppose that there are two data sets: $X \in \mathbb{R}^{n \times p}$ containing $p$ variables and $Y \in$ $\mathbb{R}^{n \times q}$ containing $q$ variables, both are obtained from $n$ observations.  Let $\hat{\Sigma}_{x x}=\frac{1}{n} X^{\top} X, \quad \hat{\Sigma}_{y y}=\frac{1}{n} Y^{\top} Y$ be the sample covariance matrices of $X$ and $Y$, respectively, and $\hat{\Sigma}_{x y}=\frac{1}{n} X^{\top} Y$ be the sample cross-covariance matrix, then the SCCA can be formulated as
\begin{equation}\label{pro:scca}
    \min_{U\in \mathbb{R}^{p\times r}, V\in \mathbb{R}^{q\times r}} -\mathrm{tr}(U^\top \hat{\Sigma}_{x y} V) + \mu_1 \|U\|_1 + \mu_2 \|V\|_1,\; \mathrm{s.t.} \; U^\top \hat{\Sigma}_{x x} U = V^\top \hat{\Sigma}_{y y} V = I_r.
\end{equation}
Let us denote $\mathcal{M}_1 = \{U\in \mathbb{R}^{p\times r}~:~U^\top \hat{\Sigma}_{x x} U = I_r \}$ and $\mathcal{M}_2 = \{V\in \mathbb{R}^{p\times r}~:~V^\top \hat{\Sigma}_{y y} V = I_r \}$. Without loss of generality, we assume that $\hat{\Sigma}_{x x}$ and $\hat{\Sigma}_{yy}$ are positive definite (otherwise, we can impose a small identity matrix). In this case, $\mathcal{M}_1$ and $\mathcal{M}_2$ are two generalized Stiefel manifolds, problem \eqref{pro:scca} is a nonsmooth problem on a product manifold $\mathcal{M} := \mathcal{M}_1 \otimes \mathcal{M}_2$. Moreover, since $\hat{\Sigma}_{x y}=\frac{1}{n} X^{\top} Y = \frac{1}{n}\sum_{i}^n x_i^\top y_i$, where $x_i,y_i$ denote the $i$-th row of $X$ and $Y$, respectively, we can rewrite \eqref{pro:scca} as the following finite-sum form:
\begin{equation}\label{pro:scca-1}
    \min_{U\in \mathbb{R}^{p\times r}, V\in \mathbb{R}^{q\times r}} -\frac{1}{n}\sum_{i}^n\mathrm{tr}(U^\top x_i^\top y_i V) + \mu_1 \|U\|_1 + \mu_2 \|V\|_1,\; \mathrm{s.t.} \; U\in \mathcal{M}_1,V\in\mathcal{M}_2.
\end{equation}
Therefore, we can apply our algorithms \textbf{ManIAL} and \textbf{StoManIAL} to solve \eqref{pro:scca-1}. Owing to the poor performance in the SPCA problem, we only list the numerical comparisons of our algorithms. We compare the performance of the proposed algorithms on the SPCA problem with random data, i.e.,  we set $n=5000$. For the \textbf{StoManIAL}, we partition the 50,000 samples into 100 subsets, and in each iteration, we sample one subset. The tolerance is set $\mathrm{tol} = 10^{-8}\times p\times r$.   Figure \ref{fig:2} presents the results of the four algorithms for fixed $p,q=200$ and varying $r=1,2$ and $\mu=0.2,0.3,0.4$. The results show that all algorithms have similar performance.

\begin{figure}[htpb]
\centering
\setlength{\abovecaptionskip}{0.cm}
\subfigure[$r = 10, \mu = 0.4$]{
\includegraphics[width=0.3\textwidth]{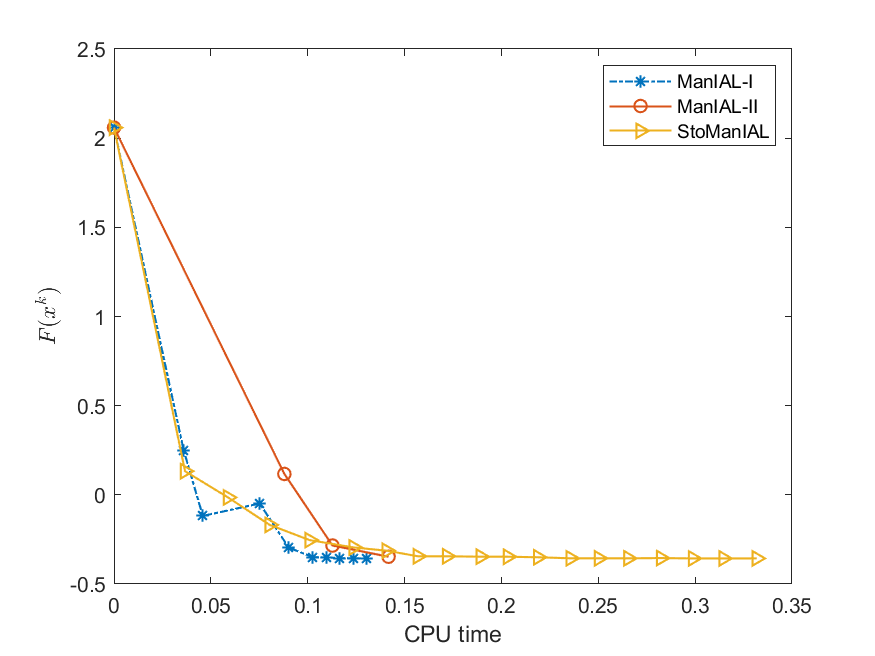}}
\subfigure[$r = 10, \mu = 0.6$]{
\includegraphics[width=0.3\textwidth]{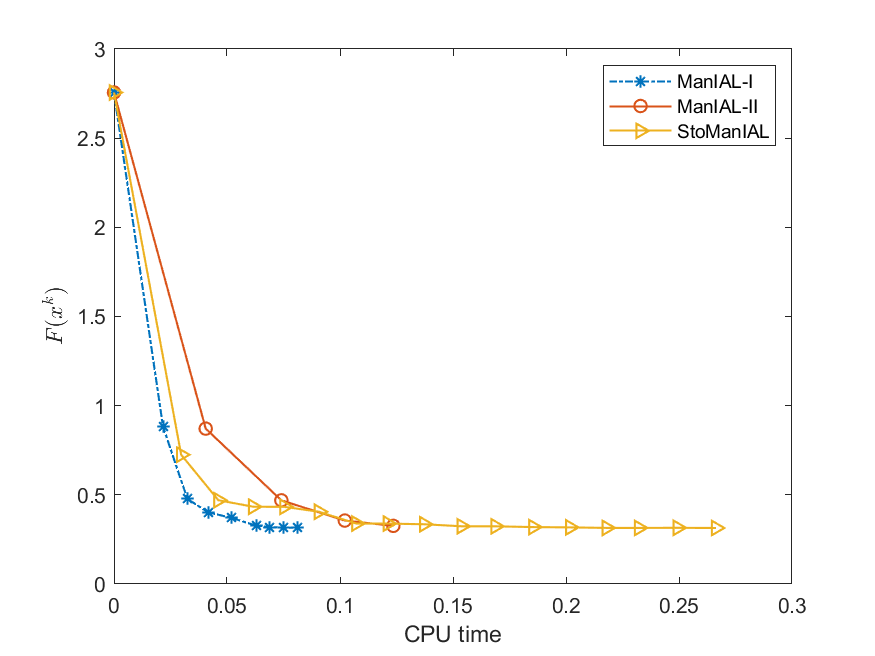}}
\subfigure[$r = 10, \mu = 0.8$]{
\includegraphics[width=0.3\textwidth]{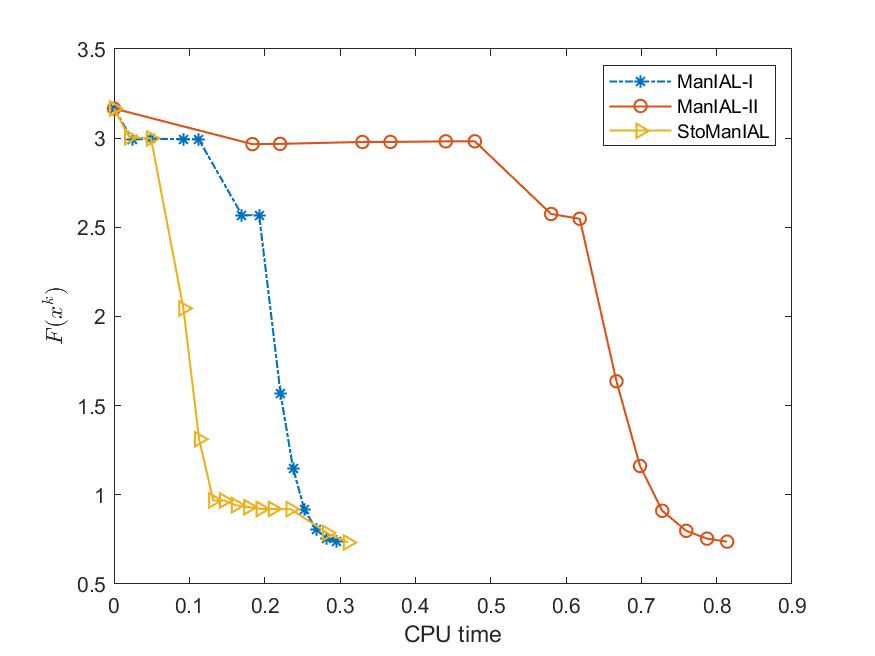}} \\
\subfigure[$r = 20, \mu = 0.4$]{
\includegraphics[width=0.3\textwidth]{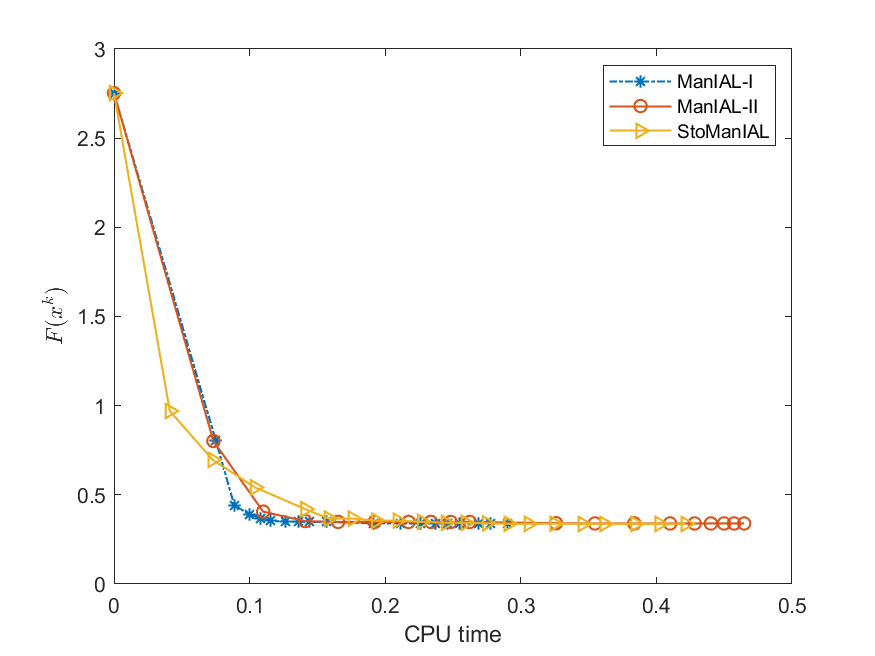}}
\subfigure[$r = 20, \mu = 0.6$]{
\includegraphics[width=0.3\textwidth]{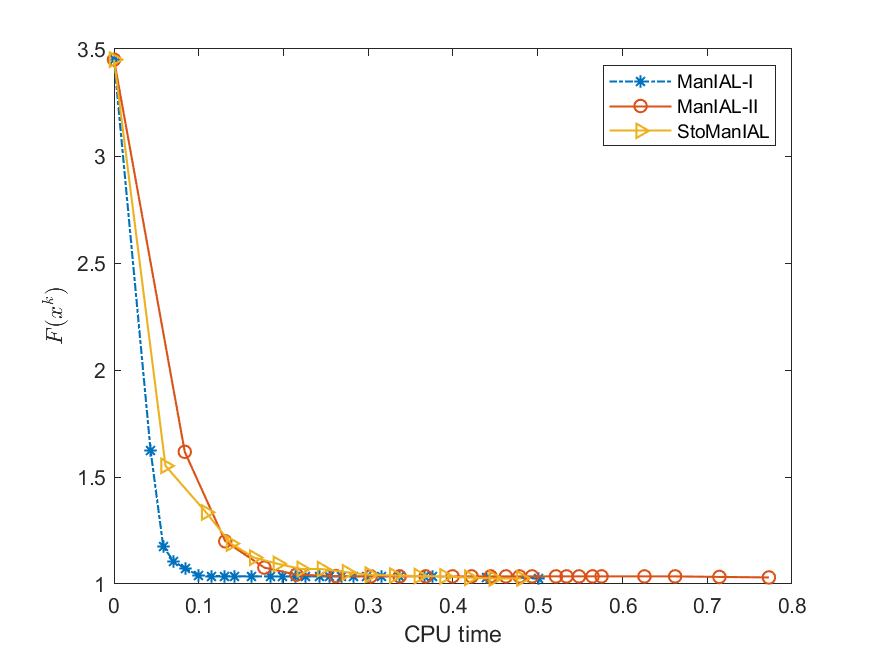}}
\subfigure[$r = 20, \mu = 0.8$]{
\includegraphics[width=0.3\textwidth]{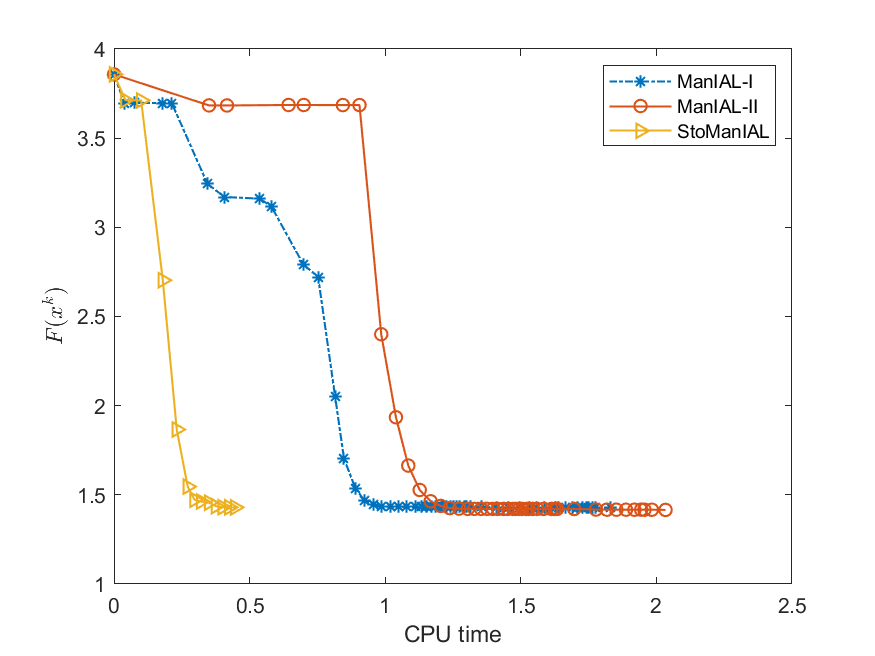}}
\caption{The performance of our algorithms on SCCA with random data $(n = 200)$}
\label{fig:5}
\end{figure}

\section{Conclusions}
This paper proposes two novel manifold inexact augmented Lagrangian methods, namely \textbf{ManIAL} and \textbf{StoManIAL}, with the oracle complexity guarantee. To the best of our knowledge, this is the first complexity result for the augmented Lagrangian method for solving nonsmooth problems on Riemannian manifolds. We prove that \textbf{ManIAL} and \textbf{StoManIAL} achieve the oracle complexity of $\mathcal{O}(\epsilon^{-3})$ and $\tilde{\mathcal{O}}(\epsilon^{-3.5})$, respectively.  Numerical experiments on SPCA and SCCA problems demonstrate that our proposed methods outperform an existing method with the previously best-known complexity result.

\bibliographystyle{siamplain}
\bibliography{optimization}

\newpage

\appendix

\section{Technical lemmas}
The next lemma is a technical observation and follows from Lemma 3 in \cite{cutkosky2019momentum}. 
\begin{lemma}\label{lemma:expect}
Suppose that the conditions \eqref{assum-rsrm-1}-\eqref{assum-rsrm-3} hold. Let us denote $s_{t} = d_t - \grad \psi(x_t)$. Then we have
$$
\begin{aligned}
& \mathbb{E}\left[\left(\grad \psi\left({x}_t, \xi_t\right)-\grad \psi\left({x}_t\right)\right) \cdot \eta_{t-1}^{-1}\left(1-a_t\right)^2 {s}_{t-1}\right]=0 \\
& \mathbb{E}\left[\left(\grad \psi\left({x}_t, \xi_t\right)-\grad \psi\left({x}_{t-1}, \xi_t\right)-\grad \psi\left({x}_t\right)+\grad \psi\left({x}_{t-1}\right)\right) \cdot \eta_{t-1}^{-1}\left(1-a_t\right)^2 {s}_{t-1}\right]=0.
\end{aligned}
$$
\end{lemma}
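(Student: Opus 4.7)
The plan is a standard tower-property argument with respect to the natural filtration generated by the samples. Let $\mathcal{F}_{t-1} = \sigma(\xi_1,\ldots,\xi_{t-1})$. Inspecting the updates in Algorithm \ref{alg:rsrm}, the iterates $x_1,\ldots,x_t$ are deterministic functions of $x_1$ and $\xi_1,\ldots,\xi_{t-1}$; the step size $\eta_{t-1}$ depends only on $G_1,\ldots,G_{t-1}$; consequently $a_t = c\eta_{t-1}^2$, $d_{t-1}$, and $s_{t-1}=d_{t-1}-\grad \psi(x_{t-1})$ are all $\mathcal{F}_{t-1}$-measurable. In particular, the whole scalar-weighted tangent vector $\eta_{t-1}^{-1}(1-a_t)^2 s_{t-1}$ is $\mathcal{F}_{t-1}$-measurable, as are $x_{t-1}$ and $x_t$.

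The first step is then to invoke the tower property and pull this $\mathcal{F}_{t-1}$-measurable factor outside the conditional expectation; the two identities reduce to showing that, conditional on $\mathcal{F}_{t-1}$, the inner random quantity (either $\grad \psi(x_t,\xi_t)-\grad \psi(x_t)$ or $\grad \psi(x_t,\xi_t)-\grad \psi(x_{t-1},\xi_t)-\grad \psi(x_t)+\grad \psi(x_{t-1})$) has mean zero. The second step is to establish this vanishing using hypothesis \eqref{assum-rsrm-1}. Since $\Mcal$ is an embedded submanifold, $\grad \psi(x,\xi) = \mathcal{P}_{T_x\Mcal}\nabla \psi(x,\xi)$ and the projection $\mathcal{P}_{T_x\Mcal}$ is linear; because $\xi_t$ is drawn independently of $\mathcal{F}_{t-1}$ and $x_t$ is $\mathcal{F}_{t-1}$-measurable, one obtains
\begin{equation*}
    \mathbb{E}\!\left[\grad \psi(x_t,\xi_t)\,\middle|\,\mathcal{F}_{t-1}\right] \;=\; \mathcal{P}_{T_{x_t}\Mcal}\,\mathbb{E}\!\left[\nabla \psi(x_t,\xi_t)\,\middle|\,\mathcal{F}_{t-1}\right] \;=\; \grad \psi(x_t),
\end{equation*}
and analogously for $x_{t-1}$. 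Each inner conditional expectation is therefore zero, and the tower law delivers both identities.

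There is no serious obstacle: the argument is essentially Lemma 3 of \cite{cutkosky2019momentum} transferred to the Riemannian setting. The only point requiring a brief justification is that Euclidean unbiasedness lifts to Riemannian unbiasedness, which follows from the linearity of the tangent-space projection and measurability of $x_{t-1},x_t$ with respect to $\mathcal{F}_{t-1}$. I would state the filtration explicitly once, verify the measurability claim in a single sentence, and then conclude both equations in parallel.
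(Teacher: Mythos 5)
Your argument is correct and is precisely the conditioning/tower-property argument that the paper leaves implicit: the paper offers no proof of this lemma at all, only the remark that it follows from Lemma 3 of \cite{cutkosky2019momentum}, and your filtration $\mathcal{F}_{t-1}=\sigma(\xi_1,\ldots,\xi_{t-1})$ together with the measurability of $x_{t-1},x_t,\eta_{t-1},a_t,s_{t-1}$ is exactly what that citation relies on. The one genuinely Riemannian point --- that Euclidean unbiasedness of $\nabla \psi(\cdot,\xi)$ in \eqref{assum-rsrm-1} lifts to $\grad \psi(x,\xi)=\mathcal{P}_{T_x\Mcal}\nabla \psi(x,\xi)$ by linearity of the tangent-space projection --- is the only detail worth recording explicitly, and you have it.
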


\begin{lemma}\label{lem:descent-lemma}
 Suppose that the conditions \eqref{assum-rsrm-1}-\eqref{assum-rsrm-3} hold.     If $\eta_t \leq \frac{1}{ L}$ in Algorithm \ref{alg:rsrm} for all $t$. Then
$$
\mathbb{E}\left[\psi\left(x_{t+1}\right)-\psi\left(x_t\right)\right] \leq \mathbb{E}\left[-\eta_t / 2\left\|\grad \psi\left(x_t\right)\right\|^2+ \eta_t / 2\left\|d_t-\grad  \psi\left(x_t\right)\right\|^2\right].
$$
\end{lemma}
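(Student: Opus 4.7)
The plan is to follow the standard descent-lemma template for momentum-based methods, adapted to the Riemannian setting via the retraction smoothness assumption \eqref{assum-rsrm-3}. Since the update rule in Algorithm \ref{alg:rsrm} reads $x_{t+1} = \mathcal{R}_{x_t}(-\eta_t d_t)$ with tangent increment $-\eta_t d_t \in T_{x_t}\mathcal{M}$, condition \eqref{assum-rsrm-3} (applied pathwise, then absorbed into the expectation via \eqref{assum-rsrm-1}) immediately yields
\begin{equation*}
\psi(x_{t+1}) \leq \psi(x_t) - \eta_t \langle d_t, \grad\psi(x_t)\rangle + \tfrac{L \eta_t^2}{2}\|d_t\|^2.
\end{equation*}

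The main algebraic step is to rewrite the inner product using the polarization identity $-\langle a,b\rangle = \tfrac{1}{2}(\|a-b\|^2 - \|a\|^2 - \|b\|^2)$ with $a = d_t$ and $b = \grad\psi(x_t)$, which gives
\begin{equation*}
-\eta_t\langle d_t,\grad\psi(x_t)\rangle = \tfrac{\eta_t}{2}\|d_t - \grad\psi(x_t)\|^2 - \tfrac{\eta_t}{2}\|d_t\|^2 - \tfrac{\eta_t}{2}\|\grad\psi(x_t)\|^2.
\end{equation*}
Substituting back produces
\begin{equation*}
\psi(x_{t+1}) - \psi(x_t) \leq \tfrac{\eta_t}{2}\|d_t-\grad\psi(x_t)\|^2 - \tfrac{\eta_t}{2}\|\grad\psi(x_t)\|^2 + \left(\tfrac{L\eta_t^2}{2} - \tfrac{\eta_t}{2}\right)\|d_t\|^2.
\end{equation*}

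The final step exploits the stepsize hypothesis $\eta_t \leq 1/L$, which ensures $\tfrac{L\eta_t^2}{2} \leq \tfrac{\eta_t}{2}$, so the trailing $\|d_t\|^2$-term is nonpositive and can be dropped. Taking expectation over the randomness of $\xi_1,\ldots,\xi_t$ yields the claimed bound. There is no real obstacle here: the only thing to be careful about is choosing the polarization split so that the $\|d_t\|^2$ coefficient has the right sign after invoking $\eta_t L \leq 1$ (using Young's inequality on the cross term instead would give a looser constant and not match the stated inequality). The result is deterministic pathwise; taking $\mathbb{E}[\cdot]$ at the end is only a matter of notation.
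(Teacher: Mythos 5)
Your proposal is correct and follows essentially the same route as the paper's proof: apply retraction smoothness to the update $x_{t+1}=\mathcal{R}_{x_t}(-\eta_t d_t)$, expand the cross term $-\eta_t\langle d_t,\grad\psi(x_t)\rangle$ via the polarization identity, and drop the nonpositive $\|d_t\|^2$ term using $\eta_t\leq 1/L$ before taking expectations. No substantive differences.
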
 

\begin{proof}
    By retr-smoothness of $\psi$, we have
$$
\begin{aligned}
\psi\left(x_{t+1}\right) & \leq \psi\left(x_t\right)-\left\langle\grad  \psi\left(x_t\right), \eta_t d_t\right\rangle+\frac{\eta_t^2 L}{2}\left\|d_t\right\|^2 \\
& =\psi\left(x_t\right)-\frac{\eta_t}{2}\left\|\grad  \psi\left(x_t\right)\right\|^2-\frac{\eta_t}{2}\left\|d_t\right\|^2+\frac{\eta_t}{2}\left\|d_t-\grad  \psi\left(x_t\right)\right\|^2+\frac{L \eta_t^2}{2}\left\|d_t\right\|^2 \\
& =\psi\left(x_t\right)-\frac{\eta_t}{2}\left\|\grad  \psi\left(x_t\right)\right\|^2+\frac{\eta_t}{2}\left\|d_t-\grad  \psi\left(x_t\right)\right\|^2-\left(\frac{\eta_t}{2}-\frac{L \eta_t^2}{2}\right)\left\|d_t\right\|^2 \\
& \leq \psi\left(x_t\right)-\frac{\eta_t}{2}\left\|\grad  \psi\left(x_t\right)\right\|^2+\frac{\eta_t}{2}\left\|d_t-\grad  \psi\left(x_t\right)\right\|^2,
\end{aligned}
$$
where for the last inequality we choose $\eta_t \leq \frac{1}{L}$.
\end{proof} 

The following technical lemma, which follows from Lemma 3 in \cite{cutkosky2019momentum}, provides a recurrence variance bound.
\begin{lemma}\label{lem:bound-st}
  Suppose that the conditions \eqref{assum-rsrm-1}-\eqref{assum-rsrm-3} hold.   With the notation in Algorithm \ref{alg:rsrm}, let $s_t=d_t-\grad  \psi\left(x_t\right)$,  we have
$$
\begin{aligned}
\mathbb{E}\left[\|s_t\|^2 \right] \leq \mathbb{E}\left[2a_t^2 G_t^2 \right. & + (1-a_t)^2 (1+4L^2 \eta_{t-1}^2) \|s_{t-1}\|^2  \\
& + \left. 4(1-a_t)^2L^2 \eta_{t-1}^2 \|\grad \psi(x_{t-1}) \|^2\right].
\end{aligned}
$$
\end{lemma}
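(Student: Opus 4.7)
The plan is to expand $s_t = d_t - \grad\psi(x_t)$ using the recursion for $d_t$, isolating a mean-zero innovation term and a memory term that propagates $s_{t-1}$. Substituting the update
\[
 d_t = \grad\psi(x_t,\xi_t) + (1-a_t)\mathcal{T}_{x_{t-1}}^{x_t}\bigl(d_{t-1} - \grad\psi(x_{t-1},\xi_t)\bigr)
\]
and inserting $\pm\grad\psi(x_{t-1})$ inside the transported bracket yields the clean splitting $s_t = E_t + (1-a_t)\mathcal{T}_{x_{t-1}}^{x_t}s_{t-1}$, where
\[
 E_t := \bigl(\grad\psi(x_t,\xi_t)-\grad\psi(x_t)\bigr) - (1-a_t)\mathcal{T}_{x_{t-1}}^{x_t}\bigl(\grad\psi(x_{t-1},\xi_t)-\grad\psi(x_{t-1})\bigr).
\]

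Next I would expand $\|s_t\|^2$ and take conditional expectation given the history $\xi_1,\dots,\xi_{t-1}$. The cross term $2\langle E_t,\,(1-a_t)\mathcal{T}_{x_{t-1}}^{x_t}s_{t-1}\rangle$ vanishes because $E_t$ is a martingale difference in $\xi_t$; this is exactly what Lemma \ref{lemma:expect} provides. Using isometry of the vector transport on the memory term gives $(1-a_t)^2\|s_{t-1}\|^2$, so the task reduces to bounding $\mathbb{E}\|E_t\|^2$.

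To this end I would further split $E_t = A + B$ with $A := a_t(\grad\psi(x_t,\xi_t)-\grad\psi(x_t))$ and $B$ the remaining Lipschitz-difference residual, then apply the Young-type inequality $\|A+B\|^2\le 2\|A\|^2+2\|B\|^2$. The inequality variance $\le$ second moment gives $\mathbb{E}\|A\|^2 \le a_t^2\,\mathbb{E}[G_t^2]$, accounting for the first summand in the claim. The same variance bound combined with condition \eqref{lipschi-rie} applied to $\psi(\cdot,\xi_t)$ along the update $x_t=\mathcal{R}_{x_{t-1}}(-\eta_{t-1}d_{t-1})$ yields $\mathbb{E}\|B\|^2 \le (1-a_t)^2 L^2\eta_{t-1}^2\,\mathbb{E}\|d_{t-1}\|^2$. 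Finally, expanding $\|d_{t-1}\|^2\le 2\|s_{t-1}\|^2+2\|\grad\psi(x_{t-1})\|^2$ and combining the $(1-a_t)^2\|s_{t-1}\|^2$ memory contribution with the $4(1-a_t)^2 L^2\eta_{t-1}^2\|s_{t-1}\|^2$ piece produces the coefficient $(1-a_t)^2(1+4L^2\eta_{t-1}^2)$ on $\|s_{t-1}\|^2$ and $4(1-a_t)^2L^2\eta_{t-1}^2$ on $\|\grad\psi(x_{t-1})\|^2$, matching the statement.

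The main obstacle is that on a general Riemannian manifold the vector transport need not preserve norms, so both $\|\mathcal{T}_{x_{t-1}}^{x_t}s_{t-1}\| = \|s_{t-1}\|$ and the symmetric use of \eqref{lipschi-rie} on $\psi(\cdot,\xi_t)$ require treating $\mathcal{T}$ as an isometry (or a comparable uniform bound). Isolating this assumption and verifying that it is consistent with the retraction-based $\mathcal{T}$ used in Algorithm \ref{alg:rsrm} is the delicate step; once in hand, the remainder of the argument mirrors the Euclidean STORM analysis of Cutkosky and Orabona.
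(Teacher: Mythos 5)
Your proof is correct and follows essentially the same route as the paper's: expand $d_t$, insert $\pm\grad\psi(x_{t-1})$ to split $s_t$ into the two martingale-difference pieces plus the transported memory term $(1-a_t)\mathcal{T}_{x_{t-1}}^{x_t}s_{t-1}$, kill the cross terms via Lemma \ref{lemma:expect}, apply $\|A+B\|^2\le 2\|A\|^2+2\|B\|^2$ together with variance $\le$ second moment and \eqref{lipschi-rie}, and finish with $\|d_{t-1}\|^2\le 2\|s_{t-1}\|^2+2\|\grad\psi(x_{t-1})\|^2$. The isometry of the transport that you flag as the delicate point is indeed used silently by the paper as well (in the step $\|\mathcal{T}_{x_{t-1}}^{x_t}(d_{t-1}-\grad\psi(x_{t-1}))\|=\|s_{t-1}\|$), so your identification of that hidden assumption is, if anything, more careful than the original.
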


\begin{proof}

By definition of $s_t$ and the notation in Algorithm \ref{alg:rsrm}, we have $s_t=d_t-\grad  \psi\left(x_t\right)=\grad  \psi\left(x_t, \xi_t\right)+(1-$ $\left.a_t\right) \mathcal{T}_{x_{t-1}}^{x_t}\left(d_{t-1}-\grad  \psi \left(x_{t-1}, \xi_t\right)\right)-\grad  \psi\left(x_t\right)$. Hence, we can write
$$
\begin{aligned}
& \mathbb{E} {\left[\left\|s_t\right\|^2\right] =\mathbb{E}\left[\left\|\grad  \psi \left(x_t, \xi_t\right)+\left(1-a_t\right)\mathcal{T}_{x_{t-1}}^{x_t}\left(d_{t-1}-\grad  \psi \left(x_{t-1}, \xi_t\right)\right)-\grad  \psi\left(x_t\right)\right\|^2\right] } \\
&= \mathbb{E}\left[ \| a_t(\grad  \psi(x_t, \xi_t)-\grad  \psi(x_t))+(1-a_t)(\grad  \psi(x_t, \xi_t)- \mathcal{T}_{x_{t-1}}^{x_t}\grad  \psi(x_{t-1}, \xi_t)) \right.  \\ 
&  +(1-a_t)(\mathcal{T}_{x_{t-1}}^{x_t}\grad  \psi(x_{t-1}) - \grad  \psi(x_t))+ \left. \left(1-a_t\right)\mathcal{T}_{x_{t-1}}^{x_t}\left(d_{t-1}- \grad  \psi\left(x_{t-1}\right)\right) \|^2\right] \\
& \leq \mathbb{E}\left[2a_t^2  \left\|\grad  \psi\left(x_t, \xi_t\right)-\grad  \psi\left(x_t\right)\right\|^2 + \left(1-a_t\right)^2\left\|s_{t-1}\right\|^2 + 2(1-a_t)^2 \right. \\
&\left.  \left\| \grad  \psi \left(x_t, \xi_t\right)-\mathcal{T}_{x_{t-1}}^{x_t}\grad  \psi \left(x_{t-1}, \xi_t\right) - \grad  \psi\left(x_t\right) +\mathcal{T}_{x_{t-1}}^{x_t} \grad  \psi\left(x_{t-1}\right)\right\|^2 \right] \\
& \leq \mathbb{E}\left[ 2a_t^2  \| \grad  \psi \left(x_t, \xi_t\right)\|^2 + (1-a_t)^2\|s_{t-1}\|^2 \right.  \\
& \left.+2(1-a_t)^2 \| \grad  \psi \left(x_t, \xi_t\right)-\mathcal{T}_{x_{t-1}}^{x_t}\grad  \psi \left(x_{t-1}, \xi_t\right) \|^2   \right] \\
& \leq \mathbb{E}\left[ 2a_t^2 G_t^2 + (1-a_t)^2  \|s_{t-1}\|^2 + 2(1-a_t)^2L^2 \eta_{t-1}^2\| d_{t-1} \|^2   \right] \\
& \leq \mathbb{E}\left[ 2a_t^2 G_t^2 + (1-a_t)^2  \|s_{t-1}\|^2 + 4(1-a_t)^2L^2 \eta_{t-1}^2 (\| s_{t-1} \|^2 + \|\grad \psi(x_{t-1}) \|^2)   \right] \\
& \leq \mathbb{E}\left[ 2a_t^2 G_t^2 + (1-a_t)^2 (1+4L^2 \eta_{t-1}^2) \|s_{t-1}\|^2 + 4(1-a_t)^2L^2 \eta_{t-1}^2 \|\grad \psi(x_{t-1}) \|^2\right], 
\end{aligned}
$$
where the first inequality uses Lemma \ref{lemma:expect} and  $\|x+y\|^2 \leq 2\|x\|^2+2\|y\|^2$, the second inequality follows from $\mathbb{E}\|x - \mathbb{E}[x]\|^2 \leq \mathbb{E}\|x\|^2$, the third inequality uses the update rule of $x_{t}$ and \eqref{lipschi-rie}.  
\end{proof}

\section{The proof of Lemma \ref{lem:rsrm}}
\begin{proof}
First, since $w \geq(4 L \kappa)^3$, we have from the update formula of $\eta_t$ that $\eta_t \leq \frac{\kappa}{w^{1/3}} \leq \frac{1}{4 L}<\frac{1}{L}$. 
 It follows from Lemma \ref{lem:descent-lemma} that
\begin{equation}\label{eq:descent}
    \mathbb{E}\left[\psi\left(x_{t+1}\right)-\psi\left(x_t\right)\right] \leq \mathbb{E}\left[-\eta_t / 2\left\|\grad \psi\left(x_t\right)\right\|^2+ \eta_t / 2\left\|d_t-\grad  \psi\left(x_t\right)\right\|^2\right].
\end{equation}
Consider the potential function $\Phi_t=\psi\left(x_t\right)+\frac{1}{24 L^2 \eta_{t-1}}\left\|s_t\right\|^2$. Since $a_{t+1}=c \eta_t^2$ and $w \geq \left(\frac{c\kappa}{4L}\right)^3$, we have that for all $t$
$$a_{t+1} \leq \frac{ c \eta_t \kappa}{w^{1/3}} \leq  \frac{c \kappa}{4 L w^{1 / 3}} \leq 1.$$
Then, we first consider $\eta_t^{-1}\left\|s_{t+1}\right\|^2-\eta_{t-1}^{-1}\left\|s_t\right\|^2$. Using Lemma \ref{lem:bound-st}, we obtain
$$
\begin{aligned}
& \mathbb{E}  {\left[\eta_t^{-1}\left\|s_{t+1}\right\|^2-\eta_{t-1}^{-1}\left\|s_t\right\|^2\right] } \\
 \leq & \mathbb{E}\left[2 \frac{a_{t+1}^2}{\eta_t} G_{t+1}^2+\frac{\left(1-a_{t+1}\right)^2\left(1+4 L^2 \eta_t^2\right)\left\|s_t\right\|^2}{\eta_t}+4\left(1-a_{t+1}\right)^2 L^2 \eta_t\left\|\grad  \psi\left(x_t\right)\right\|^2 \right.\\
 & \left. -\frac{\left\|s_t\right\|^2}{\eta_{t-1}}\right] \\
 \leq & \mathbb{E}[\underbrace{2\frac{a_{t+1}^2}{\eta_t} G_{t+1}^2}_{A_t}+\underbrace{\left(\eta_t^{-1}\left(1-a_{t+1}\right)\left(1+4 L^2 \eta_t^2\right)-\eta_{t-1}^{-1}\right)\left\|s_t\right\|^2}_{B_t}+\underbrace{4 L^2 \eta_t\left\|\grad  \psi\left(x_t\right)\right\|^2}_{C_t}] .
\end{aligned}
$$

Let us focus on the terms of this expression individually. For the first term, $A_t$, observing that $w \geq 2 G^2 \geq$ $G^2+G_{t+1}^2$, we have
$$
\begin{aligned}
\sum_{t=1}^T A_t&=\sum_{t=1}^T 2 c^2 \eta_t^3 G_{t+1}^2  =\sum_{t=1}^T \frac{2 \kappa^3 c^2 G_{t+1}^2}{w+\sum_{i=1}^t G_i^2} \\
& \leq \sum_{t=1}^T \frac{2 \kappa^3 c^2 G_{t+1}^2}{G^2+\sum_{i=1}^{t+1} G_i^2}  \leq 2 \kappa^3 c^2 \ln \left(1+\sum_{t=1}^{T+1} \frac{G_t^2}{G^2}\right)  \\ & \leq 2 \kappa^3 c^2 \ln (T+2),
\end{aligned}
$$
where in the second to last inequality we used Lemma 4 in \cite{cutkosky2019momentum}.
For the second term $B_t$, we have
$$
B_t \leq \left(\eta_t^{-1}-\eta_{t-1}^{-1}+\eta_t^{-1}\left(4 L^2 \eta_t^2-a_{t+1}\right)\right)\left\|s_t\right\|^2\leq \left(\eta_t^{-1}-\eta_{t-1}^{-1}+\eta_t\left(4 L^2-c\right)\right)\left\|s_t\right\|^2.
$$

Let us estimate $\frac{1}{\eta_t}-\frac{1}{\eta_{t-1}}$. Using the concavity of $x^{1 / 3}$, we have $(x+y)^{1 / 3} \leq x^{1 / 3}+y x^{-2 / 3} / 3$. Therefore:
$$
\begin{aligned}
\frac{1}{\eta_t}-\frac{1}{\eta_{t-1}} & =\frac{1}{\kappa}\left[\left(w+\sum_{i=1}^t G_i^2\right)^{1 / 3}-\left(w+\sum_{i=1}^{t-1} G_i^2\right)^{1 / 3}\right] \leq \frac{G_t^2}{3 \kappa \left(w+\sum_{i=1}^{t-1} G_i^2\right)^{2 / 3}} \\
& \leq \frac{G_t^2}{3 \kappa\left(w-G^2+\sum_{i=1}^t G_i^2\right)^{2 / 3}} \leq \frac{G_t^2}{3 \kappa \left(w / 2+\sum_{i=1}^t G_i^2\right)^{2 / 3}} \\
& \leq \frac{2^{2 / 3} G_t^2}{3 \kappa \left(w+\sum_{i=1}^t G_i^2\right)^{2 / 3}} \leq \frac{2^{2 / 3} G^2}{3 \kappa^3} \eta_t^2 \leq \frac{2^{2 / 3} G^2}{12 L \kappa^3} \eta_t \leq \frac{G^2}{7 L \kappa^3} \eta_t,
\end{aligned}
$$
where we use $\eta_t \leq \frac{1}{4 L}$.
Further, since $c=10 L^2+G^2 /\left(7 L \kappa^3\right)$, we have
$$
\eta_t\left(4 L^2-c\right) \leq-6L^2 \eta_t-G^2 \eta_t /\left(7 L \kappa^3\right) .
$$
Thus, we obtain $B_t \leq- 6L^2 \eta_t\left\|s_t\right\|^2$. Putting all this together yields:
\be\label{eq:sum-st+1-st}
\frac{1}{12 L^2} \sum_{t=1}^T\left(\frac{\left\|s_{t+1}\right\|^2}{\eta_t}-\frac{\left\|s_t\right\|^2}{\eta_{t-1}}\right) \leq \frac{\kappa^3 c^2}{6 L^2} \ln (T+2)+\sum_{t=1}^T\left[\frac{\eta_t}{3}\left\|\grad  \psi\left(x_t\right)\right\|^2-\frac{ \eta_t}{2}\left\|s_t\right\|^2\right].
\ee
Now, we are ready to analyze the potential $\Phi_t$. It follows from \eqref{eq:descent} that
$$
\mathbb{E}\left[\Phi_{t+1}-\Phi_t\right] \leq \mathbb{E}\left[-\frac{\eta_t}{2}\left\|\grad  \psi\left(x_t\right)\right\|^2+\frac{ \eta_t}{2}\left\|s_t\right\|^2+\frac{1}{12 L^2} \left( \frac{\|s_{t+1} \|^2}{\eta_t}-\frac{\left\|s_t\right\|^2}{\eta_{t-1}} \right) \right].
$$
Summing over $t$ and using \eqref{eq:sum-st+1-st}, we obtain
$$
\begin{aligned}
\mathbb{E}\left[\Phi_{T+1}-\Phi_1\right] & \leq \sum_{t=1}^T \mathbb{E}\left[-\frac{\eta_t}{2}\left\|\grad  \psi\left(x_t\right)\right\|^2+\frac{ \eta_t}{2}\left\|s_t\right\|^2+\frac{1}{12 L^2}\left( \frac{\|s_{t+1} \|^2}{\eta_t}-\frac{\left\|s_t\right\|^2}{\eta_{t-1}} \right)\right] \\
& \leq \mathbb{E}\left[\frac{\kappa^3 c^2}{6 L^2} \ln (T+2)-\sum_{t=1}^T \frac{\eta_t}{6}\left\|\grad  \psi\left(x_t\right)\right\|^2\right].
\end{aligned}
$$
Reordering the terms, we have
$$
\begin{aligned}
\mathbb{E}\left[\sum_{t=1}^T \eta_t\left\|\grad  \psi\left(x_t\right)\right\|^2\right] & \leq \mathbb{E}\left[6\left(\Phi_1-\Phi_{T+1}\right)+ \frac{\kappa^3 c^2}{2 L^2} \ln (T+2)\right] \\
& \leq 6\left(\psi\left(x_1\right)-F^{\star}\right)+\frac{\mathbb{E}\left[\left\|s_1\right\|^2\right]}{2 L^2 \eta_0}+\frac{\kappa^3 c^2}{L^2} \ln (T+2) \\
& \leq 6\left(\psi\left(x_1\right)-F^{\star}\right)+ \frac{w^{1 / 3} \delta^2}{2 L^2 \kappa} +\frac{\kappa^3 c^2}{L^2} \ln (T+2),
\end{aligned}
$$
where the last inequality is given by the definition of $d_1$ and $\eta_0$ in the algorithm.
Now, we relate $\mathbb{E}\left[\sum_{t=1}^T \eta_t\left\|\grad  \psi\left(x_t\right)\right\|^2\right]$ to $\mathbb{E}\left[\sum_{t=1}^T\left\|\grad  \psi\left(x_t\right)\right\|^2\right]$. First, since $\eta_t$ is decreasing,
$$
\mathbb{E}\left[\sum_{t=1}^T \eta_t\left\|\grad  \psi\left(x_t\right)\right\|^2\right] \geq \mathbb{E}\left[\eta_T \sum_{t=1}^T\left\|\grad  \psi\left(x_t\right)\right\|^2\right]
$$
Therefore, if we set $M=\frac{1}{\kappa}\left[6\left(\psi\left(x_1\right)-\psi_{\min}\right)+\frac{w^{1 / 3} \delta^2}{2 L^2 \kappa}+\frac{\kappa^3 c^2}{ L^2} \ln (T+2)\right]$, to get
\begin{equation}\label{eq:sum-grad-bound}
\begin{aligned}
\mathbb{E}\left[\sqrt{\sum_{t=1}^T\left\|\grad  \psi\left(x_t\right)\right\|^2}\right]^2 & \leq \mathbb{E}\left[\frac{6\left(\psi\left(x_1\right)-F^{\star}\right)+\frac{w^{1 / 3} \delta^2}{2 L^2 \kappa}+\frac{\kappa^3 c^2}{ L^2} \ln (T+2)}{\eta_T}\right] \\
& =\mathbb{E}\left[\frac{\kappa M}{\eta_T}\right] \leq \mathbb{E}\left[M\left(w+\sum_{t=1}^T G_t^2\right)^{1 / 3}\right].
\end{aligned}
\end{equation}
It follows from \eqref{assum-rsrm-1} that
\begin{equation}
    G_t^2=\left\|\grad  \psi\left(x_t\right)+\grad  \psi \left(x_t, \xi_t\right)-\grad  \psi\left(x_t\right)\right\|^2 \leq 2\left\|\grad  \psi\left(x_t\right)\right\|^2+2\delta^2.
\end{equation}
Plugging this in \eqref{eq:sum-grad-bound} and using $(a+b)^{1 / 3} \leq a^{1 / 3}+b^{1 / 3}$ we obtain:
$$
\begin{aligned}
& \mathbb{E}\left[\sqrt{\sum_{t=1}^T\left\|\grad  \psi\left(x_t\right)\right\|^2}\right]^2 \\
\leq & \mathbb{E}\left[M\left(w+2 \sum_{t=1}^T\delta^2\right)^{1 / 3}+M \left(2\sum_{t=1}^T\left\|\grad  \psi\left(x_t\right)\right\|^2\right)^{1 / 3}\right] \\
\leq & M\left(w+2 T \delta^2\right)^{1 / 3}+\mathbb{E}\left[2^{1 / 3} M\left(\sqrt{\sum_{t=1}^T\left\|\grad  \psi\left(x_t\right)\right\|^2}\right)^{2 / 3}\right] \\
\leq & M\left(w+2 T \delta^2\right)^{1 / 3}+2^{1 / 3} M\left(\mathbb{E}\left[\sqrt{\sum_{t=1}^T\left\|\grad  \psi\left(x_t\right)\right\|^2}\right]\right)^{2 / 3},
\end{aligned}
$$
where we have used the concavity of $x \mapsto x^a$ for all $a \leq 1$ to move expectations inside the exponents. Now, define $X=\sqrt{\sum_{t=1}^T\left\|\grad  \psi\left(x_t\right)\right\|^2}$. Then the above can be rewritten as:
$$
(\mathbb{E}[X])^2 \leq M\left(w+2 T \delta^2\right)^{1 / 3}+2^{1 / 3} M(\mathbb{E}[X])^{2 / 3}.
$$
Note that this implies that either $(\mathbb{E}[X])^2 \leq 2 M\left(w+T \delta^2\right)^{1 / 3}$, or $(\mathbb{E}[X])^2 \leq 2 \cdot 2^{1 / 3} M(\mathbb{E}[X])^{2 / 3}$. Solving for $\mathbb{E}[X]$ in these two cases, we obtain
$$
\mathbb{E}[X] \leq \sqrt{2 M}\left(w+2 T \delta^2\right)^{1 / 6}+2 M^{3 / 4}.
$$
Finally, observe that by Cauchy-Schwarz we have $\sum_{t=1}^T\left\|\grad  \psi\left(x_t\right)\right\| / T \leq X / \sqrt{T}$ so that
$$
\begin{aligned}
\mathbb{E}\left[\sum_{t=1}^T \frac{\left\|\grad  \psi\left(x_t\right)\right\|}{T}\right] & \leq \frac{\sqrt{2 M}\left(w+2 T \delta^2\right)^{1 / 6}+2 M^{3 / 4}}{\sqrt{T}} \\
&\leq \frac{w^{1 / 6} \sqrt{2 M}+2 M^{3 / 4}}{\sqrt{T}}+\frac{2 \sigma^{1 / 3}}{T^{1 / 3}}, 
\end{aligned}
$$
where we used $(a+b)^{1 / 3} \leq a^{1 / 3}+b^{1 / 3}$ in the last inequality.

\end{proof}

\end{document}